\documentclass{amsart}
\frenchspacing
\usepackage{amsfonts}
\usepackage{color}

\usepackage{amsmath,amssymb,amsthm, epsfig}

\usepackage{hyperref}

\usepackage{amsmath}

\usepackage{amssymb}

\usepackage[mathscr]{eucal}

%\usepackage{stackrel}

%\usepackage{ulem}

 %espacamento

\def\R{{\mathbb {R}}}

%%%%%PAQUETE QUE NO NUMERA LAS ECUACIONES QUE NO SE CITAN
\usepackage{mathtools}
\mathtoolsset{showonlyrefs}

%_____________________________________

\newlength{\hchng}
\newlength{\vchng}
\setlength{\hchng}{0.55in} \setlength{\vchng}{0.45in}
\addtolength{\oddsidemargin}{-\hchng}
\addtolength{\evensidemargin}{-\hchng}
\addtolength{\textwidth}{2\hchng}
%\addtolength{\topmargin}{-\vchng}
%\addtolength{\textheight}{2\vchng}

%_____________________________________

\def \R {\mathbb{R}}

\def \div {\mathrm{div}}

\def \dist {\mathrm{dist}}

\def \Leb {\mathscr{L}^N}

\newcommand{\defeq}{\mathrel{\mathop:}=}

%%%%%%%%%%%%%%_______________
\newtheorem{theorem}{Theorem}[section]
\newtheorem{lemma}[theorem]{Lemma}

\newtheorem{corollary}[theorem]{Corollary}
\theoremstyle{definition}

\newtheorem{definition}[theorem]{Definition}
\newtheorem{notation}[theorem]{Notation}
\newtheorem{example}[theorem]{Example}
\theoremstyle{remark}
\newtheorem{remark}[theorem]{Remark}
\numberwithin{equation}{section}
%_____________________________________

\newcommand{\intav}[1]{\mathchoice {\mathop{\vrule width 6pt height 3 pt depth  -2.5pt
\kern -8pt \intop}\nolimits_{\kern -6pt#1}} {\mathop{\vrule width
5pt height 3  pt depth -2.6pt \kern -6pt \intop}\nolimits_{#1}}
{\mathop{\vrule width 5pt height 3 pt depth -2.6pt \kern -6pt
\intop}\nolimits_{#1}} {\mathop{\vrule width 5pt height 3 pt depth
-2.6pt \kern -6pt \intop}\nolimits_{#1}}}

%_________________

\begin{document}
	
\title[Regularity for $p-$dead core problems and the limiting behaviour as $p \to \infty$]{Regularity
properties for $p-$dead core problems and their asymptotic limit as $p \to \infty$}

\author[J.V. da Silva, J.D. Rossi and A.M. Salort]{Jo\~{a}o V\'{i}tor da Silva, Julio D. Rossi and Ariel M. Salort}

\address{Departamento de Matem\'atica, FCEyN - Universidad de Buenos Aires and
\hfill\break \indent IMAS - CONICET
\hfill\break \indent Ciudad Universitaria, Pabell\'on I (1428) Av. Cantilo s/n. \hfill\break \indent Buenos Aires, Argentina.}

\email[J.V. da Silva]{jdasilva@dm.uba.ar}

\email[J.D. Rossi]{jrossi@dm.uba.ar}
\urladdr{http://mate.dm.uba.ar/~jrossi}

\email[A.M. Salort]{asalort@dm.uba.ar}
\urladdr{http://mate.dm.uba.ar/~asalort}

\begin{abstract}
We study regularity issues and the limiting behavior as $p\to\infty$ of nonnegative solutions for elliptic equations of $p-$Laplacian type ($2 \leq  p< \infty$) with a strong absorption:
\begin{equation*}
      -\Delta_p u(x) + \lambda_0(x) u_{+}^q(x) = 0 \quad \mbox{in} \quad \Omega \subset \R^N,
\end{equation*}
where $\lambda_0>0$ is a bounded function, $\Omega$ is a bounded domain and $0\leq q<p-1$. When $p$ is fixed, such a model is mathematically interesting since it permits the formation of dead core zones, i.e, a priori unknown regions where non-negative solutions vanish identically. First, we turn our attention to establishing sharp quantitative regularity properties for $p-$dead core solutions. Afterwards, assuming that $\displaystyle \ell \defeq \lim_{p \to \infty} q(p)/p \in [0, 1)$ exists, we establish existence for limit solutions as $p\to \infty$, as well as we characterize the corresponding limit operator governing the limit problem. We also establish sharp $C^{\gamma}$ regularity estimates for limit solutions along free boundary points, that is, points on $ \partial \{u>0\} \cap \Omega$  where the sharp regularity exponent is given explicitly by $\gamma = \frac{1}{1-\ell}$. Finally, some weak geometric and measure theoretical properties as non-degeneracy, uniform positive density, porosity and convergence of the free boundaries are proved.
\newline
\noindent \textbf{Keywords:} $p-$dead core type problem, Geometric regularity estimates, Free boundary problems, Infinity-Laplacian operator.
\newline
\noindent \textbf{AMS Subject Classifications:} 35J60, 35B65.
%\tableofcontents
\end{abstract}
\maketitle

%%%%%%%%%%%%%%%%%%%%%%%%%%%%%%%%%%%%%%%%%%%%%%%%%%%%%%%%%%%%%%%%%%%%%%%%%

\section{Introduction}\label{Introd}

 Quasi-linear elliptic equations with free boundaries appear in a number of phenomena linked through reaction-diffusion and absorption processes in pure and applied sciences. Some remarkable problems are derived from models in chemical-biological processes, combustion phenomena and population dynamics, just to mention a few examples. Regarding these studies, an often more relevant problem is that arising from diffusion processes with sign constrain (the well-known \textit{one-phase problems}), which are in chemical-physical situations the only significant case to be considered (cf. \cite{Aris1}, \cite{Aris2}, \cite{BSS}, \cite{Diaz}, \cite{HM} and references therein for some motivation). A class for such problems is given by
\begin{equation}\label{DCP}
\left\{
\begin{array}{rclcl}
     -\mathcal{Q} u(x) + f(u)\chi_{\{u>0\}} (x) & = & 0 & \mbox{in} & \Omega  \\
     u(x) & = & g(x) & \mbox{on} & \partial \Omega,
\end{array}
\right.
\end{equation}
 where $\mathcal{Q}$ is a quasi-linear elliptic operator in divergence form with $p-$structure for $2\leq p< \infty$ (cf. \cite{Choe1}, \cite{DB0} and \cite{Ser64} for more details), and  $\Omega \subset \R^N$ is a regular and bounded domain. In this context $f$ is a continuous and increasing reaction term satisfying $f(0) = 0$ and $0\leq g \in C^0(\partial \Omega)$. In models from applied sciences, $f(u)$ represents the ratio of the reaction rate at concentration $u$ to the reaction rate at concentration one. Recall that, when the nonlinearity $f \in C^1(\Omega)$ is locally $p-1$ Lipschitz near zero (we say that $f$ satisfies a Lipschitz condition of order $p-1$ at $0$ if there exist constants $\mathfrak{M}, \delta>0$ such that $f(u)\leq \mathfrak{M}u^{p-1}$ for $0<u<\delta$), it follows from the Maximum Principle that nonnegative solutions must be, in fact, strictly positive (cf. \cite{Vaz}). Nevertheless, the function $f$ may fail to be differentiable or even to decay fast enough at the origin. For instance, if $f(t)$ behaves as $t^q$ with $0<q<p-1$, $f$ fails to be Lipschitz of order $p-1$ at the origin; in this case, problem \eqref{DCP} has an absence of Strong Minimum Principle, i.e., nonnegative solutions may vanish completely within an \textit{a priori} unknown region of positive measure $\Omega_0 \subset \Omega$ known as the \textit{Dead Core} set (cf. D\'{i}az's Monograph \cite[Chapter 1]{Diaz} for a survey about this subject).
As an illustration of the previous discussion (cf. \cite{Aris1}, \cite{Aris2}, \cite{FriePhil} and \cite{Stak}), for a   domain $\Omega \subset \R^N$, certain (stationary) isothermal, and irreversible catalytical reaction processes might be mathematically modeled by boundary value problems of reaction-diffusion type of the form
\begin{equation}\label{ExChemCat}
\left\{
\begin{array}{rclcl}
     -\Delta u(x) + \lambda_0 (x) u_{+}^q(x)  & = & 0 & \mbox{in} & \Omega, \\
     u(x) & = & 1 & \mbox{on} & \partial \Omega,
\end{array}
\right.
\end{equation}
where in this context $u$ represents the density of a chemical reagent (or gas) and the non-Lipschitz kinetics corresponds to the $q^{\text{th}}$ order isothermal of Freundlich. Moreover, $\lambda_0>0$ is known as \textit{Thiele Modulus} and it controls the ratio of the reaction rate to the diffusion-convection rate. As before, when $q \in (0, 1)$,  the strong absorption due to chemical reaction may be faster than the supply caused by diffusion across the boundary, which can lead the chemical reagent to vanish in some subregions (the dead-cores), namely $\Omega_0 \defeq \{x \in \Omega: u(x) =0\} \subset \Omega$. In such zones no chemical reaction takes place. For this reason, the knowledge of the qualitative/quantitative behavior of the dead-core solutions plays a key role in chemical engineering and other applied fields.
\begin{figure}[ht]
\begin{center}
\includegraphics[width=5.1cm,height=4.8cm]{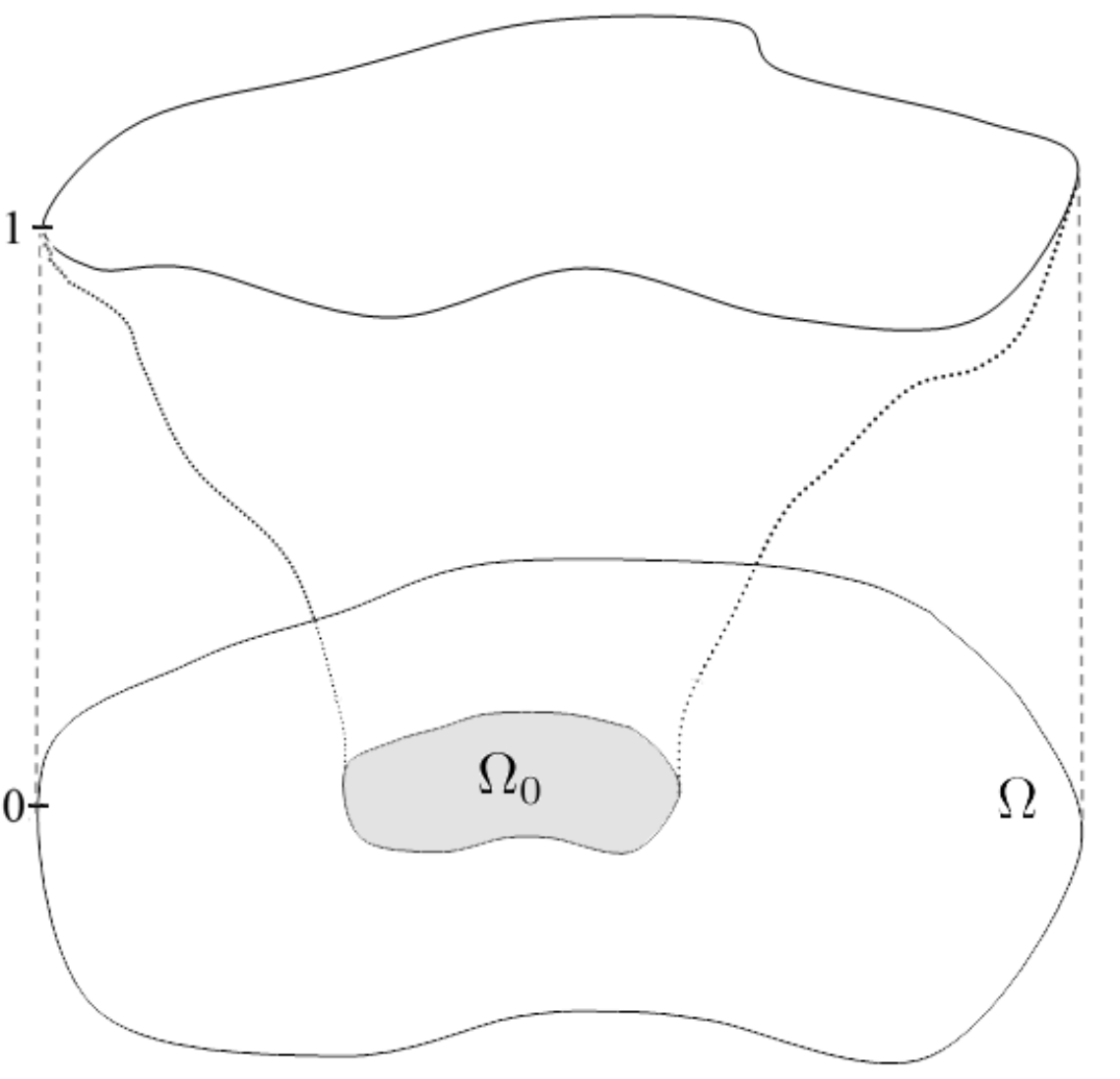}
\caption{The dead-core set $\Omega_0$ illustrating the isothermal and irreversible catalytical reaction process from \eqref{ExChemCat}.}
\label{dib3}
\end{center}
\end{figure}

 Dead core type problems have received growing attention throughout the last four decades, including: existence of solutions, formation of dead core sets, properties of localization, effectiveness factors among others. A summarized literature can be found in the seminal works of Bandle \textit{et al} \cite{BSS, BV}, D\'{i}az \textit{et al} \cite{Diaz}, \cite{DiazHern}, \cite{DiazHerr}, \cite{DiazVer}, Pucci-Serrin \cite{PS06} and references therein. In spite of the fact that there is a large amount of literature on dead core problems, quantitative regularity properties for such models with $p-$Laplacian type structure are far less studied (compare with da Silva \textit{et al} in \cite{OSS} and \cite{SS} for instance). This fact is one of our starting points for the current study.

 In this article we study diffusion problems governed by the $p-$Laplacian operator for which a Minimum Principle is not available. Particularly, we are interested in prototypes coming from combustion problems, chemical models (like catalytical processes in chemical engineering) or enzymatic kinetics where the existence of dead cores plays an important role in the understanding of such a chemical-physical models (cf. \cite{Aris1}, \cite{Aris2}, \cite{Guo} and \cite{HM}). The  model to be analyzed here is given by
\begin{equation}\label{Eqp-Lapla}
\left\{
\begin{array}{rclcl}
     -\Delta_p u_p(x)+\lambda_0(x) (u_p)_{+}^q(x) & = & 0 & \mbox{in} & \Omega, \\
     u_p(x) & = & g(x) & \mbox{on} & \partial \Omega.
\end{array}
\right.
\end{equation}
Here $\Delta_p u = \div(|\nabla u|^{p-2}\nabla u)$ stands for the $p$-Laplace operator, $\lambda_0>0$ is a bounded
function (bounded away from zero and from infinity) and $0\leq q < p-1$ is the \textit{order of reaction}.
The boundary datum is assumed to be continuous and nonnegative, $0\leq g \in C^0(\partial \Omega)$.
In this context, \eqref{Eqp-Lapla} is said to be an equation with \textit{strong absorption} condition and $\partial\{u>0\}\cap \Omega$ is (the physical) \emph{free boundary} of the problem.

Observe also that the unique weak solution (cf. \cite[Theorem 1.1 ]{Diaz}) to \eqref{Eqp-Lapla} appears when considering minimizers of the following (zero) constrained nonlinear $p-$obstacle type problem
\begin{equation}%\tag{{\bf \text{Min}}}
\label{eqMinHaus}
    \min\left\{\int_{\Omega} \left(\frac{1}{p}|\nabla v (x)|^p + \lambda_0(x)\frac{v^{q+1}}{q+1}\chi_{\{v>0\}}(x)\right)dx\colon v \in W^{1, p}(\Omega), \,\, v\geq 0 \,\, \text{and}\,\, v=g \,\, \text{on} \,\,\partial \Omega\right\}.
\end{equation}
 Variational problems like \eqref{eqMinHaus} were widely developed in the last decades, see \cite{KKPS}, \cite{LeeShah} and \cite{Manf88} for some examples on this subject.

The study of \eqref{Eqp-Lapla} is meaningful, not only for applications, but also for its relation with several mathematical free boundary problems appearing in the literature  (cf. Alt-Phillips \cite{AP}, D\'{i}az \cite{Diaz}, Friedman-Phillips \cite{FriePhil} and Phillips \cite{Phil}, Shahgholian \textit{et al} \cite{KKPS} and \cite{LeeShah} for problems with variational structure and da Silva \textit{et al} \cite{LRS} and Teixeira \cite{Tei-16} for a non-variational counterpart; we
also refer the reader to da Silva \textit{et al} \cite{OS} and \cite{OSS} for similar problems in parabolic settings).

 A fundamental issue appearing in free boundary problems consists in inferring which is the optimal expected regularity to weak solutions. For example, if we fix $0<q<p-1$ and $R>0$, the one dimensional profile $u: (-R, R) \to \R_{+}$ given by
$$
    u(x) = \mathfrak{c}  x_{+}^{\frac{p}{p-1-q}}
$$
(for an appropriate constant $\mathfrak{c} = \mathfrak{c}(p, q)>0$) is a weak solution to
$ - (|u'|^{p-2} u')' +u^{q} = 0$ in $(-R, R)$.
It is known that in general, solutions to \eqref{Eqp-Lapla} are  $C_{\text{loc}}^{1, \gamma}$ for some $\gamma \in (0, 1)$ (cf. \cite{Choe1}, \cite{DB0} and \cite{Tolk} for more details). However, for such an example, fixed $p>1$, one observes that $u \in C_{\text{loc}}^{\lfloor \alpha \rfloor, \beta}$, where
$$
  \alpha(p, q) \defeq \frac{p}{p-1-q} \quad \text{and} \quad \beta(p, q) \defeq \frac{p}{p-1-q} - \left\lfloor \frac{p}{p-1-q} \right\rfloor.
$$
Here $\left\lfloor \cdot \right\rfloor$ stands for the integer part. Note that when $q>0$,
\begin{eqnarray*}
 \alpha(p, q) = \frac{p}{p-q-1} > \frac{p}{p-1}\quad \Rightarrow \quad \text{Improved regularity estimates along the free boundary}.
\end{eqnarray*}
In effect, $C_{\text{loc}}^{\alpha^{\sharp}}$ regularity for the exponent $\alpha^{\sharp} \defeq \frac{p}{p-1} = 1+\frac{1}{p-1}$ is the better regularity estimate expected (and proved in some cases) for weak solutions of the  $p-$Laplacian operator with bounded RHS, see \cite{ATU2} for a systematic treatment of this subject (the so termed $C^{p^{\prime}}$ regularity conjecture) and compare with \cite{ALS} in the context of $p-$obstacle problems. Furthermore, notice that $\alpha(p, q)> 2$ provided that $q > \max\left\{0,  \frac{p-2}{2}\right\}$, which means that it is a classical solution, even across the free boundary. The proper understanding of this phenomenon should yield decisive geometric information about the solution and its free boundary, and this is the main goal (of the first part) of our investigation. Therefore, we will show that \textit{any} solution to \eqref{Eqp-Lapla} behaves (near its free boundary) like the previous one-dimensional example.

First, we prove which is the growth rate of  solutions leaving their free boundaries. Throughout this manuscript universal constants are the ones depending only on structural and physical parameters of the problem, i.e., $N, p, q, \Omega^{\prime}$ and the bounds for $\lambda_0$.

\begin{theorem}[{\bf Strong non-degeneracy}]\label{LGR} Let $u$ be a nonnegative, bounded weak solution to \eqref{Eqp-Lapla}, $\Omega^{\prime} \Subset \Omega$ and let $x_0 \in \overline{\{u >0\}} \cap \Omega^{\prime}$. Then there exists a universal constant $\mathfrak{C}_0>0$ such that for all $0<r<\min\{1, \dist(\Omega^{\prime}, \partial \Omega)\}$ there holds
\begin{equation}\label{nondeg.est}
   \displaystyle \sup_{\partial B_r(x_0)} \,u(x) \geq \mathfrak{C}_0\left(N, p, q, \inf_{\Omega} \lambda_0(x)\right)
   r^{\frac{p}{p-1-q}}.
\end{equation}
\end{theorem}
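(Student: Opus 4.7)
The plan is to run a classical barrier–comparison argument, with the barrier being exactly the one-dimensional profile $\mathfrak{c}\, |x-x_0|^{p/(p-1-q)}$ highlighted in the introduction. By a density/continuity argument, it suffices to prove the estimate when $x_0 \in \{u>0\}$: if the inequality holds at every interior point of the positivity set with the same universal constant, it holds at every point of its closure by letting $x_0$ approach the free boundary and using the continuity of $u$ and of $r \mapsto \sup_{\partial B_r(x_0)} u$.

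Fix then $x_0 \in \{u>0\} \cap \Omega^\prime$ and $0<r<\min\{1,\dist(\Omega^\prime,\partial\Omega)\}$, set $\alpha \defeq p/(p-1-q)$, and define the radial candidate
\[
  \Psi(x) \defeq \mathfrak{C}_0\, |x-x_0|^{\alpha}, \qquad x \in \overline{B_r(x_0)}.
\]
A direct computation in radial coordinates gives
\[
  \Delta_p \Psi(x) = \mathfrak{C}_0^{\,p-1}\,\alpha^{p-1}\,\bigl[(\alpha-1)(p-1)+N-1\bigr]\,|x-x_0|^{(\alpha-1)(p-1)-1},
\]
and the choice of $\alpha$ yields the crucial algebraic identity $(\alpha-1)(p-1)-1 = \alpha q$, so the right-hand side matches in $|x-x_0|$ the term $\Psi^{q} = \mathfrak{C}_0^{\,q}|x-x_0|^{\alpha q}$. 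Therefore, the inequality $-\Delta_p\Psi + \lambda_0 \Psi^q \ge 0$ reduces to a purely algebraic condition on $\mathfrak{C}_0$, which holds as soon as
\[
  \mathfrak{C}_0 \le \left( \frac{\inf_\Omega \lambda_0}{\alpha^{p-1}\bigl[(\alpha-1)(p-1)+N-1\bigr]} \right)^{\!\frac{1}{p-1-q}}.
\]
This defines the universal constant. Since $\alpha>1$, $\Psi$ is $C^1$ with vanishing gradient at $x_0$, so $\Psi \in W^{1,p}(B_r(x_0))$ is a bona fide weak supersolution on the whole ball.

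To conclude, I would argue by contradiction: suppose $\sup_{\partial B_r(x_0)} u < \mathfrak{C}_0\, r^{\alpha}$. Then $u \le \Psi$ on $\partial B_r(x_0)$. Since $u$ is a weak solution (hence subsolution) of $-\Delta_p w + \lambda_0 w^q = 0$, and $\Psi$ is a weak supersolution, the comparison principle for the operator $w \mapsto -\Delta_p w + \lambda_0 w^q$ (valid because the zeroth-order term $w \mapsto \lambda_0 w^q$ is nondecreasing on $[0,\infty)$, so that testing the difference of the two inequalities against $(u-\Psi)_+$ gives monotone contributions of the correct sign) implies $u \le \Psi$ throughout $B_r(x_0)$. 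Evaluating at $x_0$ yields $u(x_0) \le \Psi(x_0) = 0$, contradicting $x_0 \in \{u>0\}$.

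The main technical point to be careful about is precisely the weak comparison principle for $-\Delta_p + \lambda_0(\cdot)_+^q$ when $0\le q < p-1$: the absorption is only Hölder of order $q<p-1$ at the origin, so Lipschitz-type uniqueness arguments are not immediately available. However, monotonicity of $t \mapsto t_+^q$ combined with the standard monotonicity of the $p$-Laplacian (the inequality $(|a|^{p-2}a-|b|^{p-2}b)\cdot(a-b)\ge 0$) makes the $(u-\Psi)_+$ testing work without any sign or Lipschitz issue; everything else in the argument is bookkeeping of the exponent identity $(\alpha-1)(p-1)-1=\alpha q$.
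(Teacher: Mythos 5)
Your proof is correct and takes essentially the same route as the paper: the same radial barrier $\mathfrak{C}_0|x-x_0|^{p/(p-1-q)}$, the same exponent identity, and the same constant (your $\bigl(\inf_\Omega\lambda_0/(\alpha^{p-1}[(\alpha-1)(p-1)+N-1])\bigr)^{1/(p-1-q)}$ simplifies to the paper's $\bigl(\inf_\Omega\lambda_0\,(p-1-q)^p/(p^{p-1}(pq+N(p-1-q)))\bigr)^{1/(p-1-q)}$), followed by the same comparison-principle contradiction $u(x_0)\le\Psi(x_0)=0$. The only cosmetic difference is that the paper first rescales to the unit ball before invoking the barrier, whereas you work directly on $B_r(x_0)$; both are correct and equivalent.
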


Next, we prove the following improved regularity estimate at free boundary points:

\begin{theorem}[{\bf Improved regularity along the free boundary}]\label{ThmGR1} Let $u$ be a nonnegative, bounded weak solution to \eqref{Eqp-Lapla}, $\Omega^{\prime} \Subset \Omega$ and $x_0 \in \partial \{u >0\} \cap \Omega^{\prime}$. Then, there exists a universal constant $\mathfrak{C}_1= \mathfrak{C}_1\left(N, p, q, \inf_{\Omega} \lambda_0(x)\right)>0$ such that
\begin{equation}\label{ImpEst}
    u(x) \leq \mathfrak{C}_1
    \|u\|_{L^{\infty}(\Omega)}  |x-x_0|^{\frac{p}{p-1-q}}
\end{equation}
for all $0<|x-x_0|\ll \min\{1, \dist(\Omega^{\prime}, \partial \Omega)\}$.
\end{theorem}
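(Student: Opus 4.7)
My plan is to establish \eqref{ImpEst} by a discrete dyadic iteration driven by a compactness argument, following the scheme now standard for dead-core type free boundary problems (cf.\ Teixeira's treatment of the $p$-torsional creep problem). Write $\sigma \defeq p/(p-1-q)$, the exponent suggested by the one-dimensional prototype recalled just before the statement. After translating $x_0$ to the origin and performing a preliminary rescaling in $x$ so that $B_1 \Subset \Omega$, it is enough — once the $\|u\|_{L^\infty(\Omega)}$ factor is absorbed into the scaling — to prove the dyadic decay
$$\sup_{B_{2^{-k}}} u \leq M\, \|u\|_{L^\infty(\Omega)}\, 2^{-k\sigma}, \qquad k\in \N,$$
for a universal constant $M > 0$. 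The desired estimate \eqref{ImpEst} will then follow by interpolation on the annuli $2^{-(k+1)} \leq |x| \leq 2^{-k}$.

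I would argue by induction on $k$, the base case being immediate for $M\geq 1$. For the inductive step, set $v(y) \defeq u(2^{-k}y)/(M\,\|u\|_{L^\infty(\Omega)}\,2^{-k\sigma})$ on $B_1$. Using the crucial identity $\sigma(p-1-q) = p$, a direct change of variables shows that $v$ is a nonnegative weak solution of
$$-\Delta_p v(y) + \bigl(M\,\|u\|_{L^\infty(\Omega)}\bigr)^{-(p-1-q)}\, \lambda_0(2^{-k}y)\, v_+^q(y) = 0 \quad \text{in } B_1,$$
with $\|v\|_{L^\infty(B_1)} \leq 1$ by the inductive hypothesis and $v(0) = 0$, since $0 \in \partial\{u>0\}$ forces $u(0) = 0$ by continuity.

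The engine of the iteration is the following compactness lemma: for every $\eta > 0$ there exists $\delta > 0$, depending only on $N$, $p$, $q$ and $\|\lambda_0\|_{L^\infty(\Omega)}$, such that any nonnegative weak solution $w$ in $B_1$ of $-\Delta_p w + \varepsilon\, \lambda_0\, w_+^q = 0$ with $\|w\|_{L^\infty(B_1)} \leq 1$, $w(0) = 0$, and $0 \leq \varepsilon \leq \delta$ satisfies $\sup_{B_{1/2}} w \leq \eta$. I would prove it by contradiction and compactness: a violating sequence $w_j$ with $\varepsilon_j \to 0$ would converge, thanks to the uniform $C^{1,\alpha}_{\loc}$ estimates of DiBenedetto--Tolksdorf for the $p$-Laplacian with bounded right-hand side, along a subsequence in $C^{1,\alpha}_{\loc}(B_1)$ to a nonnegative $p$-harmonic limit $w_\infty$ in $B_1$ with $w_\infty(0) = 0$ and $\sup_{B_{1/2}} w_\infty \geq \eta$; V\'{a}zquez's strong minimum principle then forces $w_\infty \equiv 0$ on the connected set $B_1$, a contradiction.

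Applying the lemma with $\eta = 2^{-\sigma}$ and choosing $M$ so large that $\bigl(M\,\|u\|_{L^\infty(\Omega)}\bigr)^{-(p-1-q)}\|\lambda_0\|_\infty \leq \delta$ closes the induction, because $\sup_{B_{1/2}} v \leq 2^{-\sigma}$ translates back to $\sup_{B_{2^{-(k+1)}}} u \leq M\,\|u\|_{L^\infty(\Omega)}\, 2^{-(k+1)\sigma}$. The main obstacle I anticipate is the compactness step itself: one must verify that the precompactness of the rescaled family is genuinely uniform in $\varepsilon \in [0,\delta]$, that both the value $w_\infty(0) = 0$ and the lower bound $\sup_{B_{1/2}} w_\infty \geq \eta$ pass to the limit (which requires $C^{1,\alpha}$ convergence on a neighborhood strictly containing $\overline{B_{1/2}}$), and that V\'{a}zquez's strong minimum principle indeed applies on this connected open set. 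Everything else — the algebra of the scaling, the choice of universal $M$, and the interpolation over dyadic annuli — amounts to bookkeeping dictated by the homogeneity exponent $\sigma$.
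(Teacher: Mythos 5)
Your proof is correct and reaches the estimate by a route genuinely different from the paper's, though still inside the same compactness-and-dyadic-decay paradigm. You run a single induction $\sup_{B_{2^{-k}}} u \leq M\|u\|_{L^\infty(\Omega)} 2^{-k\sigma}$ driven by a flatness-improvement lemma (bounded nonnegative solution, $w(0)=0$, small absorption coefficient $\Rightarrow \sup_{B_{1/2}} w \leq \eta$), and the uniform boundedness of the rescaled functions needed to invoke that lemma comes directly from the inductive hypothesis. The paper instead splits the argument into two pieces: Lemma~\ref{LemmaIter} establishes the geometric decay only at scales $j$ in the doubling set $\mathfrak{D}[u]$ of \eqref{EqDoubProp} --- the doubling property being exactly what makes the normalization $v_k = u_k(2^{-j_k}\,\cdot)/\mathcal{S}_{1/2^{j_k+1}}[u_k]$ uniformly bounded --- and the induction in Theorem~\ref{THMAux} then disposes of the complementary scales $j\notin\mathfrak{D}[u]$ trivially from the definition of $\mathfrak{D}[u]$. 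Your compactness mechanism also differs: the paper contradicts Serrin's Harnack inequality directly (the normalization forces $\mathcal{S}_{1/2}[v_k]=1$ while $\mathcal{I}_{1/2}[v_k]=0$ and the scaled right-hand side vanishes), whereas you pass to a $C^{1}_{\loc}$ limit via DiBenedetto--Tolksdorf estimates, identify it as a nonnegative $p$-harmonic function vanishing at the origin, and contradict the strong minimum principle; note that the latter is just the classical one for $p$-harmonic functions, so invoking V\'{a}zquez's theorem, while not incorrect, is heavier machinery than needed here. What each buys: your version avoids the doubling-set bookkeeping and is structurally cleaner; the paper's Lemma~\ref{LemmaIter} requires only Harnack's inequality (no $C^{1,\alpha}$ theory or weak-form limit passage) and, being formulated for the class $\mathfrak{J}_p(B_1)$ independently of any particular induction, is reused almost verbatim to obtain the gradient and Hessian estimates in Lemmas~\ref{IRresult2.66} and \ref{LemmL2Est}. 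One small caveat applies equally to both arguments: your $M$ satisfying $(M\|u\|_{L^\infty(\Omega)})^{-(p-1-q)}\|\lambda_0\|_\infty\leq\delta$ depends on $\|u\|_{L^\infty(\Omega)}$ and $\|\lambda_0\|_\infty$, just as the paper's $\mathfrak{R}_0$ in \eqref{EqNScond} does, so the constant in \eqref{ImpEst} is not literally a function of $N,p,q,\inf\lambda_0$ alone; nonetheless the product $M\|u\|_{L^\infty(\Omega)}$ (respectively $\kappa_0/\mathfrak{R}_0^{\sigma}$) is bounded by a genuinely universal quantity, so the estimate itself is sound.
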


 The insight for proving Theorem \ref{ThmGR1} comes from a (finer) asymptotic blow-up analysis combined with a sharp control of the growth rate of solutions close to the free boundary, and it was inspired in \cite{KKPS}, \cite{LeeShah}, \cite{Tei16} and \cite{Tei-16}. Heuristically, for an appropriate family of normalized and scaled solutions, if the ``magnitude'' of the \textit{Thiele modulus} is uniformly controlled (with small enough bounds), then we must expect the following iterative geometric decay
 $$
    \displaystyle \sup_{B_{\frac{1}{2^{k}}}(x_0)} u(x) \leq \mathfrak{C}_{\sharp}\left(N, p, q, \inf_{\Omega} \lambda_0(x)\right) 2^{-\frac{kp}{p-1-q}}
    \qquad \forall \,\,x_0 \in \partial\{u>0\}\cap \Omega^{\prime} \quad \text{and} \quad \forall k \in \mathbb{N},
 $$
which will yield the aimed geometric regularity estimate along free boundary points (See Section \ref{SecImpReg}).

Observe that, using our regularity estimates from Theorem \ref{ThmGR1} and Theorem \ref{LGR}, we are able to show that any weak solution to \eqref{Eqp-Lapla} is ``confined'' between the graph of two suitable multiples of $\dist(\cdot, \partial \{u>0\})^{\frac{p}{p-1-q}}$, i.e.,
$$
  \mathfrak{c}_{\sharp} [\dist(x_0, \partial \{u>0\})]^{\frac{p}{p-1-q}} \leq u(x_0) \leq \mathfrak{c}^{\sharp}
   [\dist(x_0, \partial \{u>0\})]^{\frac{p}{p-1-q}},
$$
where $x_0 \in \{u>0\}$ is  close enough to the free boundary and $\mathfrak{c}_{\sharp}, \mathfrak{c}^{\sharp}>0$ are universal constants (depending on $N$, $p$, $q$, $\lambda_0$ and the uniform bound for $u$), see Corollaries \ref{CorNonDeg} and \ref{DistEst} for more details.

Recall that our model \eqref{Eqp-Lapla} involves an ``ellipticity factor'' deteriorating on an unknown set, the \textit{free boundary}. Particularly, this phenomenon implies less diffusivity for the model near such a set and consequently, regularity properties of solutions become more delicate to obtain. In our case, such phenomenon occurs along the set of critical points of solutions, i.e.,
$$
     \mathcal{C}_\Omega[u] \defeq \left\{x \in \Omega: |\nabla u(x)|=0\right\}.
$$
Since we can re-write our prototype operator as $$\displaystyle \Delta_p u = |\nabla u|^{p-2}\Delta u + (p-2)
|\nabla u|^{p-4} \sum_{i,j=1}^{N} \frac{\partial u}{\partial x_i} \frac{\partial u}{\partial x_j}
\frac{\partial^2 u}{\partial x_i \partial x_j},$$
(here we are talking about viscosity solutions)
solutions to \eqref{Eqp-Lapla} can only be ``irregular'' if $|\nabla u|$ is small enough, because the operator degenerates as $|\nabla u|\approx 0$. Surprisingly, for the model \eqref{Eqp-Lapla} we can control (uniformly) the decay of the gradient at free boundary points via a sophisticated iterative mechanism. In a precise manner (see Lemma \ref{IRresult2.66} for the details), for any weak solution to \eqref{Eqp-Lapla} it holds that
\begin{enumerate}
  \item[(i)] $\mathcal{C}_\Omega[u] \subset \overline{ \{u=0\}} \cap \Omega$,
  \item[(ii)] $\displaystyle \sup_{B_r(x_0)} |\nabla u| \leq \mathfrak{c}_{\ast}r^{\frac{1+q}{p-1-q}}$, \qquad $ \forall x_0 \in \partial \{u>0\} \cap \Omega^{\prime}$ and $r\ll 1$, for a universal constant $\mathfrak{c}_{\ast}>0.$
\end{enumerate}

The approach employed in our article is flexible enough to yield several other interesting applications concerning regularity at free boundary points. Among these we establish a new estimate in the $L^2-$average sense for $p-$dead core solutions. More precisely (see Lemma \ref{LemmL2Est}), there exists a constant $M = M(N, p, q, \lambda_0)$ such that, for any weak solution to \eqref{Eqp-Lapla} and any $x_0$ interior free boundary point, there holds
$$
   \displaystyle \left(\intav{B_r(x_0)} (|\nabla u(x)|^{p-2}|D^2 u(x)|)^2 dx\right)^{\frac{1}{2}} \leq M r^{\frac{pq}{p-1-q}} \quad \forall \,\,0<r \ll1.
$$

The second part of our article is focused in analyzing the asymptotic behavior as $p$ and $q$ diverge.
Here we will assume that the boundary condition is Lipschitz continuous, $g\in Lip (\partial \Omega)$,
and note that it can be extended as a Lipschitz function to the whole $\overline{\Omega}$ with the same
Lipschitz constant, see \cite{ACJ}.  Recently, motivated by game theory (``Tug of-war games''), in \cite{JPR} it is studied the following problem
\begin{equation*}
  \left\{
  \begin{array}{rclcl}
    \displaystyle  \Delta_p \,u_p(x)& = & f(x) & \mbox{in}& \Omega, \\
    u_p(x) & = & g(x) & \mbox{on} & \partial \Omega,
  \end{array}
  \right.
\end{equation*}
   with a forcing term $f\geq0$. In this context, $\{u_p\}$ converges, up to a subsequence, to a limiting function $u_{\infty}$, which fulfills the following problem in the viscosity sense
\begin{equation}\label{eqlimp-Lap2}
  \left\{
  \begin{array}{rclcl}
    \min\left\{ \Delta_{\infty} \, u_{\infty}(x), |\nabla u_{\infty}(x)|- \chi_{\{f>0\}} (x) \right\} & = & 0 & \mbox{in}&  \Omega,\\
    u_{\infty}(x) & = & g(x) & \mbox{on} & \partial \Omega,
  \end{array}
  \right.
\end{equation}
where $\displaystyle \Delta_\infty u(x) \defeq \nabla u(x)^TD^2u(x) \nabla u(x)$ is the well-known \textit{$\infty-$Laplace operator} (cf. \cite{ACJ} for a survey).
At this point, a natural question is the expected behavior of the $p-$dead core solutions and their free boundaries as $p\to\infty$. That is precisely the subject of the second part of this manuscript:  we turn our attention to the study of several geometric and analytical properties for limit solutions and their free boundaries. Motivated by the analysis of the asymptotic behavior of several variational problems (see for example \cite{daSRS1, daSRS2, JPR, MRU, RT, RTU, RW}), we will focus our attention on the analysis under the condition that the diffusivity degree of the operator and the reaction factor of the model are large enough. Intuitively, we would like to understand and characterize the physical process when $p$ and $q$ diverge in a controlled manner. We will also assume in this limit procedure that the boundary datum $g$ is a fixed Lipschitz function.

In our first result, we prove the convergence of the family of $p-$dead core solutions, as well as we deduce the corresponding limit operator (in non-divergence form) driving the limit equation.

\begin{theorem}[{\bf Limiting equation}]\label{MThmLim1} Let $(u_p)_{p\geq 2}$ be the family of solutions to
\eqref{Eqp-Lapla} with $g\in Lip (\partial \Omega)$.
Assume that $\displaystyle \ell \defeq \lim_{p \to \infty} q(p)/p \in [0, 1)$ exists. Then, up to a subsequence, $u_p \to u_{\infty}$ uniformly in $\overline{\Omega}$.
Furthermore, such a limit fulfills
\begin{equation}\label{EqLim}
\left\{
\begin{array}{rcrcl}
  \max\left\{-\Delta_{\infty} u_{\infty}, \,\, -|\nabla u_{\infty}| + u_{\infty}^{\ell}\right\} & = & 0 & \text{in} & \{u_{\infty} > 0\} \cap \Omega ,\\
  -\Delta_{\infty} u_{\infty} & =& 0& \text{in} & \{u_{\infty} = 0\} \cap \Omega,\\
  u_{\infty} & = & g & \text{on} & \partial \Omega,
\end{array}
\right.
\end{equation}
in the viscosity sense.
\end{theorem}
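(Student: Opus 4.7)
The argument falls naturally into three pieces: compactness of the family $\{u_p\}$, and identification of the limit equation separately on the positivity set $\{u_\infty > 0\}$ and on the coincidence set $\{u_\infty = 0\}$.

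\textbf{Stage 1: compactness and boundary condition.} Since the absorption is non-negative, the comparison principle applied to \eqref{Eqp-Lapla} gives $0 \leq u_p \leq \|g\|_{L^\infty(\partial\Omega)}$. For uniform gradient bounds I extend $g$ to a Lipschitz function $\bar g$ on $\overline{\Omega}$ with the same Lipschitz constant (McShane--Whitney) and test the variational characterization \eqref{eqMinHaus} against $\bar g$, obtaining
\begin{equation*}
\left(\int_{\Omega} |\nabla u_p|^p\, dx\right)^{1/p} \leq C,
\end{equation*}
uniformly in $p$, with $C = C(\|g\|_\infty, \Lip(g), \|\lambda_0\|_\infty, |\Omega|)$. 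By H\"older, $\|\nabla u_p\|_{L^m(\Omega)} \leq C|\Omega|^{1/m-1/p}$ for every $m \leq p$, so for any fixed $m > N$, Morrey's embedding furnishes a uniform $C^{0,1-N/m}$ bound on $\{u_p\}_{p\ge m}$. Arzel\`a--Ascoli plus a diagonal extraction produce a subsequence with $u_p \to u_\infty$ uniformly on $\overline{\Omega}$, and the limit lies in $W^{1,\infty}(\Omega)\cap C(\overline{\Omega})$ with $u_\infty = g$ on $\partial\Omega$.

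\textbf{Stage 2: viscosity limit on $\{u_\infty>0\}$.} Fix $x_0\in\Omega$ and a $C^2$ test $\phi$ strictly touching $u_\infty$ from above (resp.\ below) at $x_0$. By uniform convergence there exist $x_p\to x_0$ at which $u_p-\phi$ attains the corresponding extremum; a vertical translation (which preserves all derivatives) ensures $\phi(x_p)=u_p(x_p)$. The $p$-viscosity inequality at $x_p$ reads
\begin{equation*}
|\nabla\phi(x_p)|^{p-2}\Delta\phi(x_p) + (p-2)|\nabla\phi(x_p)|^{p-4}\Delta_{\infty}\phi(x_p) \;\gtreqqless\; \lambda_0(x_p)\, u_p(x_p)^{q(p)},
\end{equation*}
with $\geq$ for the sub-test and $\leq$ for the super-test. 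Write $A=|\nabla \phi(x_0)|$ and $B=u_\infty(x_0)>0$. In the sub-test case, taking $p$-th roots and using $q(p)/p\to \ell$, $u_p(x_p)\to B$, $\lambda_0^{1/p}\to 1$, yields $A\geq B^\ell$, i.e.\ $-|\nabla\phi|+\phi^\ell \leq 0$ at $x_0$; and if $\Delta_\infty\phi(x_0)<-\delta<0$ were assumed, the term $(p-2)|\nabla\phi|^{p-4}\Delta_\infty\phi$ would asymptotically dominate and drive the left-hand side negative, contradicting LHS $\geq$ RHS $>0$. Hence $\max\{-\Delta_\infty\phi,\,-|\nabla\phi|+\phi^\ell\}(x_0)\leq 0$. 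In the super-test case, if $A\leq B^\ell$ the constraint branch is already $\geq 0$; otherwise $A>B^\ell$, so $(B^\ell/A)^p$ decays exponentially while $(p-2)$ is only polynomial, and dividing the inequality by $(p-2)|\nabla\phi(x_p)|^{p-4}$ sends the right-hand side to $0$ and forces $\Delta_\infty\phi(x_0)\leq 0$. The degenerate subcase $A=0$ with $B>0$ is ruled out at the $p$-level itself, using $|\Delta_\infty\phi(x_p)| \leq |\nabla\phi(x_p)|^2\|D^2\phi\|$ to see that no such test function can satisfy the $p$-inequality for $p$ large.

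\textbf{Stage 3: viscosity limit on $\{u_\infty=0\}$ and main obstacle.} At $x_0$ with $u_\infty(x_0)=0$ the sub-test case is automatic: $\phi\geq u_\infty\geq 0$ with $\phi(x_0)=0$ forces $\nabla\phi(x_0)=0$ and $D^2\phi(x_0)\geq 0$, so $\Delta_\infty\phi(x_0)=0$. For the super-test, $u_p(x_p)\to 0$ together with $\ell \in [0,1)$ yields $\lambda_0(x_p) u_p(x_p)^{q(p)}\to 0$, and the classical passage from the $p$-Laplacian to $\Delta_\infty$ (invoking semicontinuous envelopes at zero-gradient points) produces $-\Delta_\infty\phi(x_0)\geq 0$. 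The technical heart of the proof lies in Stage 2: the $p$-Laplacian carries two distinct scales, $|\nabla\phi|^{p-2}$ and $(p-2)|\nabla\phi|^{p-4}$, and one must select the correct exponent to divide by on each branch; the exponential comparison $(B^\ell/A)^p\to 0$ or $\to\infty$ (according to whether $A>B^\ell$ or $A<B^\ell$) is precisely the mechanism that separates the constraint branch $|\nabla u|=u^\ell$ from the $\infty$-harmonic branch $\Delta_\infty u=0$ in the limit.
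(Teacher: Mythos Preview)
Your proof is correct and follows essentially the same route as the paper: uniform $W^{1,p}$ bounds plus Morrey and Arzel\`a--Ascoli for compactness, then passage to the limit in the viscosity inequality by isolating the dominant scale in the expansion $\Delta_p\phi = |\nabla\phi|^{p-2}\Delta\phi + (p-2)|\nabla\phi|^{p-4}\Delta_\infty\phi$. The paper phrases Stage~2 as ``divide by $(p-2)|\nabla\phi(x_p)|^{p-4}$ and let $p\to\infty$'', whereas you phrase it as an exponential comparison between $|\nabla\phi|^p$ and $\phi^{q}$ via $p$-th roots; these are the same mechanism. Two small differences worth noting: (i) for compactness you test the variational formulation against a Lipschitz extension of $g$, while the paper multiplies the equation by $u_p$ and integrates by parts --- your route is a bit cleaner and avoids the boundary term the paper writes somewhat loosely; (ii) for the sub-test on $\{u_\infty=0\}$ you simply observe that a $C^2$ function touching a nonnegative function from above at a zero must satisfy $\nabla\phi(x_0)=0$, $D^2\phi(x_0)\ge 0$, hence $\Delta_\infty\phi(x_0)=0$, which is more direct than the paper's repetition of the division argument there. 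Your explicit treatment of the degenerate case $A=|\nabla\phi(x_0)|=0$ with $B>0$ (via $|\Delta_\infty\phi|\le |\nabla\phi|^2\|D^2\phi\|$) also fills a point the paper leaves implicit.
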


We also obtain improved regularity estimates along a free boundary point for the limit profiles.

\begin{theorem}[{\bf Sharp growth for limit solutions}]\label{MThmLim3}
Assume that $\displaystyle \ell \defeq \lim_{p \to \infty} q(p)/p \in [0, 1)$ exists. Let $u_{\infty}$ be a uniform limit of the family of solutions $u_p$ of \eqref{Eqp-Lapla} with $g\in Lip (\partial \Omega)$ and $\Omega^{\prime} \Subset \Omega$. Then, for any $x_0 \in \partial \{u_{\infty}>0\} \cap \Omega^{\prime}$ and $0<r \ll 1$ the following estimate holds:
\begin{equation}\label{estim}
  \displaystyle \sup_{B_r(x_0)} u_{\infty}(x) \leq 2\cdot2^{\frac{1}{1-\ell}}(1-\ell)^{\frac{1}{1-\ell}}r^{\frac{1}{1-\ell}}.
\end{equation}
\end{theorem}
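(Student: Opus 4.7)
The plan is to pass to the limit $p\to\infty$ in the sharp finite-$p$ growth estimate of Theorem~\ref{ThmGR1}, applied along a sequence of approximating free boundary points $x_p\in\partial\{u_p>0\}$. The crux is to keep explicit track of how the universal constant $\mathfrak{C}_1$ in Theorem~\ref{ThmGR1} depends on $p$ and $q(p)$, so that the limit matches the sharp value $2\cdot 2^{1/(1-\ell)}(1-\ell)^{1/(1-\ell)}$ predicted by the one-dimensional explicit profile.

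\textbf{Steps.} First, fix $x_0\in\partial\{u_\infty>0\}\cap\Omega^{\prime}$. Uniform convergence $u_p\to u_\infty$ in $\overline{\Omega}$ (Theorem~\ref{MThmLim1}) together with the strong non-degeneracy of Theorem~\ref{LGR}---which prevents the dead-core sets $\{u_p=0\}$ from shrinking away from $x_0$---yields a sequence $x_p\in\partial\{u_p>0\}\cap\Omega^{\prime}$ with $x_p\to x_0$. Comparison with the constant supersolution $\|g\|_{L^\infty(\partial\Omega)}$ of \eqref{Eqp-Lapla} provides $\|u_p\|_{L^\infty(\Omega)}\le\|g\|_{L^\infty(\partial\Omega)}$ uniformly in $p$, so Theorem~\ref{ThmGR1} at $x_p$ gives
\begin{equation*}
\sup_{B_\rho(x_p)} u_p \;\le\; \mathfrak{C}_1(N,p,q,\inf_\Omega\lambda_0)\,\|g\|_{L^\infty(\partial\Omega)}\,\rho^{p/(p-1-q)},\qquad 0<\rho\ll 1.
\end{equation*}
Next, I track $\mathfrak{C}_1$ through the iterative dyadic-decay scheme sketched just after Theorem~\ref{ThmGR1}. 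Each dyadic step contributes a factor $2^{p/(p-1-q)}\to 2^{1/(1-\ell)}$, while the base of the induction (obtained by comparison with the one-dimensional profile $\mathfrak{c}(p,q)\,x_+^{p/(p-1-q)}$) contributes the coefficient
\begin{equation*}
\mathfrak{c}(p,q)=\left(\frac{\lambda_0\,(p-1-q)^p}{p^{p-1}(q+1)(p-1)}\right)^{\!1/(p-1-q)}\longrightarrow (1-\ell)^{1/(1-\ell)}
\end{equation*}
as $p\to\infty$ with $q/p\to\ell$, while the additional factor $2$ records the doubling needed to interpolate between consecutive dyadic radii. Finally, the inclusion $B_{r-|x_p-x_0|}(x_0)\subset B_r(x_p)$, uniform convergence on $\overline{B_r(x_0)}$, $x_p\to x_0$ and $p/(p-1-q)\to 1/(1-\ell)$ let me pass to the limit $p\to\infty$ in the displayed inequality with $\rho=r$, producing the bound \eqref{estim}.

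\textbf{Main obstacle.} The delicate part is the quantitative control of $\mathfrak{C}_1$: Theorem~\ref{ThmGR1} only provides an abstract universal constant, so recovering the explicit coefficient $2\cdot 2^{1/(1-\ell)}(1-\ell)^{1/(1-\ell)}$ requires revisiting its proof to verify that every auxiliary factor (namely, the dependence on $\|u_p\|_\infty$, the absorption constant $\lambda_0^{1/(p-1-q)}$, and the exponents entering the geometric progressions) produces only bounded multiplicative contributions that collapse in the limit to the one-dimensional coefficient $(1-\ell)^{1/(1-\ell)}$ enhanced by the dyadic factor $2\cdot 2^{1/(1-\ell)}$. Once this bookkeeping is carried out, the passage $p\to\infty$ is routine.
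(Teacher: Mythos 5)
Your strategy is the same as the paper's: approximate the limit free boundary point $x_0$ by points $x_p\in\partial\{u_p>0\}$, apply the sharp finite-$p$ growth estimate at $x_p$, and send $p\to\infty$. The only real difference is in how the constant is handled. The paper avoids the bookkeeping you describe by invoking the pre-packaged estimate \eqref{Sharp_est} from Remark~\ref{FlatHip}, which already has the explicit form $\mathcal{S}_r[u]\le 2\cdot 2^{\frac{p}{p-1-q}}\mathfrak{C}_0\, r^{\frac{p}{p-1-q}}$ with $\mathfrak{C}_0$ the non-degeneracy constant from Theorem~\ref{LGR} (chosen via the flatness parameter $\tau_0$); the limit $\mathfrak{C}_0\to(1-\ell)^{1/(1-\ell)}$ is then immediate because the exponent $1/(p-1-q)\to 0$ kills every sub-exponential contribution. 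Your plan of re-tracking $\mathfrak{C}_1$ through the dyadic iteration would arrive at the same place, but note two small inaccuracies: (i) the coefficient that actually enters the paper's iteration is the $N$-dimensional radial barrier constant $\bigl[\lambda_0(p-1-q)^p/(p^{p-1}(pq+N(p-1-q)))\bigr]^{1/(p-1-q)}$, not the one-dimensional profile constant $\bigl[\lambda_0(p-1-q)^p/(p^{p-1}(q+1)(p-1))\bigr]^{1/(p-1-q)}$ you wrote down (they coincide for $N=1$ and both tend to $(1-\ell)^{1/(1-\ell)}$, so the limit is unaffected); (ii) the $\|g\|_{L^\infty}$ factor you carry from Theorem~\ref{ThmGR1} does not simply ``stay bounded''---if you unwind the normalization in that proof with the optimal choice of $\mathfrak{R}_0$, the $\kappa_0=\|u\|_\infty$ factor cancels exactly against $\mathfrak{R}_0^{-p/(p-1-q)}$, leaving only a term $(\|\lambda_0\|_\infty/\tau_0)^{1/(p-1-q)}\to 1$. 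Your text gestures at this cancellation but does not pin it down; the paper's reliance on \eqref{Sharp_est} sidesteps it cleanly.
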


Finally, we obtain a sharp lower control on how limit solutions detach from their free boundaries.

\begin{theorem}[{\bf Strong non-degeneracy for limit solutions}]\label{MThmLim2}
Assume that $\displaystyle \ell \defeq \lim_{p \to \infty} q(p)/p \in [0, 1)$ exists. Let $u_{\infty}$ be a uniform limit to solutions $u_p$ of \eqref{Eqp-Lapla}  with $g\in Lip (\partial \Omega)$ and $\Omega^{\prime} \Subset \Omega$. Then, for any
$x_0 \in \partial \{u_{\infty}>0\} \cap \Omega^{\prime}$ and any $0<r \ll 1$, the following estimate holds:
\begin{equation}\label{estim.22}
  \displaystyle \sup_{B_r(x_0)} u_{\infty}(x) \geq (1-\ell)^{\frac{1}{1-\ell}} r^{\frac{1}{1-\ell}}.
\end{equation}
\end{theorem}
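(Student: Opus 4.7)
The plan is to derive \eqref{estim.22} by passing to the limit $p \to \infty$ in the strong non-degeneracy of Theorem~\ref{LGR} applied to the approximating family $\{u_p\}_{p\geq 2}$. The first ingredient is to anchor Theorem~\ref{LGR} at exactly $x_0$ for each large $p$. Since $x_0 \in \partial\{u_\infty > 0\} \cap \Omega^{\prime}$, pick $y_k \to x_0$ with $u_\infty(y_k) > 0$; the uniform convergence $u_p \to u_\infty$ on $\overline{\Omega}$ given by Theorem~\ref{MThmLim1} forces $u_p(y_k) > 0$ once $p$ is large enough (for each fixed $k$), so, via a diagonal extraction, $x_0 \in \overline{\{u_p > 0\}}$ along a subsequence. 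For such $p$, Theorem~\ref{LGR} gives
\begin{equation*}
\sup_{\partial B_r(x_0)} u_p(x) \;\geq\; \mathfrak{C}_0^{(p)}\, r^{\frac{p}{p-1-q}}, \qquad 0 < r < \min\{1,\dist(\Omega^{\prime},\partial\Omega)\}.
\end{equation*}

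The second ingredient is an asymptotic analysis of $\mathfrak{C}_0^{(p)}$. Making explicit the radial subsolution $\phi(x) = c\,|x - x_0|^{\alpha}$, with $\alpha = p/(p-1-q)$, underlying Theorem~\ref{LGR}, the requirement $-\Delta_p\phi + (\inf_\Omega \lambda_0)\,\phi^q \leq 0$ forces
\begin{equation*}
\mathfrak{C}_0^{(p)} \;=\; \left(\frac{\inf_\Omega \lambda_0}{\alpha^{p-1}\bigl[(\alpha-1)(p-1) + N - 1\bigr]}\right)^{\frac{1}{p-1-q}}.
\end{equation*}
Taking logarithms with $q(p)/p \to \ell$, one has $\alpha \to 1/(1-\ell)$ and $(p-1)/(p-1-q) \to 1/(1-\ell)$, while the contributions of $\log(\inf_\Omega \lambda_0)$ and $\log[(\alpha-1)(p-1)+N-1] \sim \log p$ vanish after division by $p-1-q \sim (1-\ell)p$. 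The surviving term is $-(p-1)\log\alpha/(p-1-q) \to \log(1-\ell)/(1-\ell)$, so $\mathfrak{C}_0^{(p)} \to (1-\ell)^{1/(1-\ell)}$ while the exponent $p/(p-1-q) \to 1/(1-\ell)$.

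Finally, uniform convergence on the compact set $\overline{B_r(x_0)}$ lets me pass to the limit in the displayed inequality: the left-hand side tends to $\sup_{\partial B_r(x_0)} u_\infty(x)$ and the right-hand side tends to $(1-\ell)^{1/(1-\ell)} r^{1/(1-\ell)}$, yielding \eqref{estim.22} because $\sup_{B_r}\geq \sup_{\partial B_r}$. The main obstacle is exactly this asymptotic bookkeeping: the factor $\alpha^{p-1}$ sits borderline between being absorbed into $o(1)$ and surviving in the limit, and only the precise cancellation $(p-1)\log\alpha/(p-1-q) \to -\log(1-\ell)/(1-\ell)$ produces the clean constant $(1-\ell)^{1/(1-\ell)}$ appearing in \eqref{estim.22}; any coarser estimate would yield a strictly weaker constant or even a trivial zero limit.
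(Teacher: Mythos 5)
Your strategy is exactly the one the paper uses: apply the quantitative non-degeneracy of Theorem~\ref{LGR} to each $u_p$ anchored at a free-boundary point, then pass to the limit, tracking the limit of the constant $\mathfrak{C}_0 = \mathfrak{C}_0(p,q,N,\inf\lambda_0)$. Your asymptotic bookkeeping is correct and is actually more detailed than what appears in the paper: rewriting the denominator as $\alpha^{p-1}[(\alpha-1)(p-1)+N-1]$ with $\alpha = p/(p-1-q)$, the $\log(\inf\lambda_0)$ and $O(\log p)$ terms are annihilated by the $1/(p-1-q)\sim 1/((1-\ell)p)$ factor, leaving $-(p-1)\log\alpha/(p-1-q)\to \log(1-\ell)/(1-\ell)$, hence $\mathfrak{C}_0^{(p)}\to (1-\ell)^{1/(1-\ell)}$ while the exponent $p/(p-1-q)\to 1/(1-\ell)$.

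There is, however, a minor imprecision in the anchoring step. From $y_k\to x_0$ with $u_\infty(y_k)>0$ and uniform convergence you get, for each $k$, a threshold $p_k$ such that $u_p(y_k)>0$ for $p\geq p_k$; the diagonal extraction then yields a sequence $z_p\in\{u_p>0\}$ with $z_p\to x_0$, which is \emph{not} the same as the claim ``$x_0\in\overline{\{u_p>0\}}$ along a subsequence'' (for any fixed $p$ the set $\{u_p>0\}$ may stay a positive distance from $x_0$). The fix is exactly what the paper does: apply Theorem~\ref{LGR} at $z_p$ (rather than at $x_0$) to get
$\sup_{\partial B_r(z_p)} u_p \geq \mathfrak{C}_0^{(p)}\, r^{\frac{p}{p-1-q}}$,
then use $z_p\to x_0$, uniform convergence of $u_p$, and continuity of $u_\infty$ to transfer the supremum to $B_r(x_0)$ in the limit. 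With that small adjustment, your argument is complete and coincides with the paper's.
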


In contrast with the previous results, we do not expect (in general) a point-wise gradient estimate for limit solutions. However, we are able to prove the following estimate in average for the gradient.

\begin{theorem}[{\bf An average gradient estimate for limit solutions}]\label{ThmGradLim}
Assume that $\displaystyle \ell \defeq \lim_{p \to \infty} \frac{q(p)}{p} \in [0, 1)$ exists. Let $u_{\infty}$ be a uniform limit to solutions $u_p$ of \eqref{Eqp-Lapla}  with $g\in Lip (\partial \Omega)$ and $\Omega^{\prime} \Subset \Omega$. Then, for any $x_0 \in \partial \{u_{\infty}>0\} \cap \Omega^{\prime}$ and any $0<r \ll 1$, the following estimate holds:
\begin{equation}\label{estim.33}
  \displaystyle \intav{B_r(x_0)} |\nabla u_{\infty}(x)|dx \leq 2\cdot2^{\frac{\ell}{1-\ell}}(1-\ell)^{\frac{1}{1-\ell}} r^{\frac{1}{1-\ell}}.
\end{equation}
\end{theorem}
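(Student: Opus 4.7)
The plan is to reduce the gradient-in-average bound to an application of Theorem \ref{MThmLim3} by first deriving a Caccioppoli-type estimate at the pre-limit level and then passing to the limit $p\to\infty$. Fix $x_0 \in \partial\{u_\infty>0\}\cap \Omega^{\prime}$, a small $r>0$, and a standard cutoff $\eta \in C_c^\infty(B_{2r}(x_0))$ with $\eta \equiv 1$ on $B_r(x_0)$ and $|\nabla \eta| \leq c_N/r$. Testing the weak formulation of \eqref{Eqp-Lapla} against the admissible $\varphi = u_p \eta^p$ and discarding the non-negative absorption contribution $\int \lambda_0 u_p^{q+1} \eta^p$, a careful $\ve$-Young inequality yields
\begin{equation*}
\Big(\intav{B_r(x_0)} |\nabla u_p|^p \,dx\Big)^{1/p} \leq \frac{\kappa_p}{r}\,\|u_p\|_{L^\infty(B_{2r}(x_0))},
\end{equation*}
with constants $\kappa_p$ bounded uniformly in $p$. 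Jensen's inequality then gives the same control for the $L^1$-average of $|\nabla u_p|$.

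Next, I would pass to the limit $p\to\infty$. The uniform convergence $u_p \to u_\infty$ on $\overline{\Omega}$ from Theorem \ref{MThmLim1} immediately gives $\|u_p\|_{L^\infty(B_{2r}(x_0))} \to \|u_\infty\|_{L^\infty(B_{2r}(x_0))}$. The family $\{u_p\}_{p\geq 2}$ admits uniform local Lipschitz bounds (a consequence of the Lipschitz boundary datum $g$ combined with comparison against explicit radial barriers, the same mechanism underlying Theorem \ref{MThmLim1}), so along a subsequence $\nabla u_p \cd \nabla u_\infty$ weakly-$\ast$ in $L^\infty(B_r(x_0))$. Testing this weak-$\ast$ convergence against $\phi = \mathrm{sign}(\nabla u_\infty)\chi_{B_r(x_0)} \in L^1$ yields the lower semicontinuity inequality
\begin{equation*}
\intav{B_r(x_0)} |\nabla u_\infty|\,dx \leq \liminf_{p\to \infty} \intav{B_r(x_0)} |\nabla u_p|\,dx \leq \frac{\kappa}{r}\|u_\infty\|_{L^\infty(B_{2r}(x_0))}.
\end{equation*}
Inserting the sharp sup estimate \eqref{estim} of Theorem \ref{MThmLim3} applied on $B_{2r}(x_0)$ and rearranging the algebraic factors $2^{1/(1-\ell)}$ and $(1-\ell)^{1/(1-\ell)}$ then produces the constant in \eqref{estim.33}.

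The main technical obstacle lies in the careful bookkeeping of the $p$-dependent constants in the Caccioppoli step: the naive Young estimate $p\,|\nabla u_p|^{p-1}\eta^{p-1} u_p |\nabla \eta| \leq \tfrac12 |\nabla u_p|^p \eta^p + C_p\, u_p^p |\nabla \eta|^p$ leaves a factor of $p$ that must be absorbed through an optimal choice $\ve = \ve(p)$ so that $\kappa_p$ remains bounded (ideally, $\kappa_p \to 1$) as $p\to\infty$. A secondary delicate point is the justification of $\nabla u_p \cd \nabla u_\infty$ in $L^\infty$ weak-$\ast$, which hinges on the uniform Lipschitz estimates for $\{u_p\}_{p \geq 2}$ that are implicit in the construction of the limit equation via Theorem \ref{MThmLim1}; once these are available, the lower semicontinuity argument and the sup bound of Theorem \ref{MThmLim3} close the proof.
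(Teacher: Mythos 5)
Your Caccioppoli step fails because the constant $\kappa_p$ cannot be kept bounded as $p \to \infty$, no matter how you choose the Young parameter. Testing with $\varphi = u_p\eta^p$ and discarding the absorption term gives
\begin{equation*}
\int \eta^p |\nabla u_p|^p\,dx \leq p\int u_p\,\eta^{p-1}|\nabla u_p|^{p-1}|\nabla\eta|\,dx,
\end{equation*}
and applying H\"older (or, equivalently, optimizing Young's inequality) yields $\int_{B_r}|\nabla u_p|^p \leq p^p\int u_p^p|\nabla\eta|^p$. Optimizing $\ve$ in $(p-1)\ve < 1$ one finds $\ve_\ast = 1/p$, which gives precisely $\kappa_p = p$ after taking the $p$-th root; any other admissible $\ve$ gives something worse. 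This linear growth of the Caccioppoli constant is intrinsic to the $p$-Laplacian (it is the reason Lipschitz control of $p$-harmonic functions is not obtained by energy estimates alone), and it is not compensated by the decay of $\|u_p\|_{L^\infty(B_{2r}(x_0))}$, which by \eqref{Sharp_est} stays bounded away from $0$ as $p\to\infty$ for fixed $r$. Hence the inequality $\big(\intav{B_r}|\nabla u_p|^p\big)^{1/p} \leq \frac{\kappa_p}{r}\|u_p\|_{L^\infty(B_{2r})}$ degenerates and the rest of your argument does not close. A secondary issue: the weak-$\ast$ $L^\infty$ convergence $\nabla u_p \cd \nabla u_\infty$ would require a uniform-in-$p$ $L^\infty$ gradient bound for $u_p$ in $B_r(x_0)$, which is not available from Lemma \ref{Lemma2.4} (only $L^p$ bounds are, giving weak convergence in any fixed $L^s$, which is what the paper actually uses).

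The paper's proof avoids all of this by invoking the pointwise gradient estimate at free boundary points, Lemma \ref{IRresult2.66} in the form \eqref{EquEstGrad}: $\sup_{B_r(x_p)}|\nabla u_p| \leq 2^{\frac{1+q}{p-1-q}}\mathfrak{C}_0\, r^{\frac{1+q}{p-1-q}}$, whose constant is uniformly controlled as $p\to\infty$ (it converges to $2^{\ell/(1-\ell)}(1-\ell)^{1/(1-\ell)}$). Combined with weak $L^2$ convergence of gradients and lower semicontinuity of the $L^1$-average, this gives the claimed bound directly. If you want to salvage an approach in the spirit of yours without the pointwise gradient bound, you would need an $r$-scale-by-$r$-scale argument at the level of the $u_p$'s (e.g. the iterated doubling scheme of Section \ref{SecImpReg}) rather than a one-shot Caccioppoli estimate; the latter simply cannot absorb the factor $p$.
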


We also present several other quantitative/qualitative properties for free boundaries of limit solutions, among which we include a necessary condition in order to have convergence of the free boundaries
$$
  \partial \{u_p > 0\} \to \partial \{u_{\infty} > 0\}\quad \mbox{as} \quad  p\to \infty,
$$
in the sense of the Hausdorff distance (see Theorem \ref{MThmLim5}).
Another important point which we deal with concerns regularity of certain limit free boundary points under an appropriate geometric condition (see Theorem \ref{RegFB}). At the end we put forward some examples in order to illustrate some features of the limit problem.

\section{Preliminaries}\label{Section2}
In this section we introduce some preliminary results that we will use throughout the article.

\begin{notation}
We adopt the following notations:
\begin{itemize}
\item[\checkmark] $u_{+} = \max\{u, 0\}$.
\item[\checkmark] $\mathfrak{F}(u_0, \Omega) \defeq \partial \{u_0 >0\} \cap \Omega$ will mean the free boundary.
\item[\checkmark] $\Leb$ denotes the $N$-dimensional Lebesgue measure.
\item[\checkmark] $\displaystyle \mathcal{S}_r[u](x_0) \defeq   \sup_{B_r(x_0)} u(x) \quad \text{and} \quad  \displaystyle \mathcal{I}_r[u](x_0) \defeq   \inf_{B_r(x_0)} u(x).$ The center of the ball is omitted when $x_0 = 0$.
\end{itemize}
\end{notation}

\begin{definition}[{\bf Weak solution}] $u \in W^{1, p}_{\text{loc}}(\Omega)$ is a weak super-solution (resp. sub-solution) to
\begin{equation}\label{eqweaksol}
     -\Delta_p u = \Psi(x, u) \quad \text{in }  \Omega,
\end{equation}
if for all $0\leq \varphi \in C^1_0(\Omega)$ it holds
$$
  \displaystyle \int_{\Omega} |\nabla u|^{p-2}\nabla u\cdot \nabla \varphi(x)\,dx \geq \int_{\Omega} \Psi(x, u)\varphi(x)\,dx \quad \left(\text{resp.}\,\,  \leq \int_{\Omega} \Psi(x, u)\,dx\right).
$$
Finally, $u$ is a weak solution to \eqref{eqweaksol} when it is simultaneously a super-solution and a  sub-solution.
\end{definition}

Since we are assuming that $p \geq 2$, then \eqref{Eqp-Lapla} is not singular at points where the gradient vanishes. Consequently, the mapping $x \mapsto \Delta_p \phi(x) = |\nabla \phi(x)|^{p-2}\Delta \phi(x) + (p-2)|\nabla \phi(x)|^{p-4}\Delta_{\infty} \phi(x)$ is well-defined and continuous for all $\phi \in C^2(\Omega)$.

Next, we introduce the notion of viscosity solution to \eqref{Eqp-Lapla}. We refer to the survey \cite{CIL} for the general theory of viscosity solutions.

\begin{definition}[{\bf Viscosity solution}]
  An upper (resp. lower) semi-continuous function $u: \Omega \to \R$ is said to be a viscosity sub-solution (resp. super-solution) to \eqref{Eqp-Lapla} if, whenever $x_0 \in \Omega$ and $\phi \in C^2(\Omega)$ are such that $u-\phi$ has a strict local maximum (resp. minimum) at $x_0$, then
$$
   -\Delta_p \phi(x_0) + \lambda_0(x)\phi_{+}^q(x_0)\leq 0 \quad (\text{resp.} \,\,\,\geq 0).
$$
Finally, $u \in C(\Omega)$ is said to be a viscosity solution to \eqref{Eqp-Lapla} if it is
simultaneously a viscosity sub-solution and a viscosity super-solution.
\end{definition}

\begin{definition}[{\bf Viscosity solution for the limit equation}]\label{DefVSlimeq} A non-negative function $u \in C(\Omega)$ is said to be a viscosity solution to \eqref{EqLim} if:
\begin{enumerate}
  \item it is a viscosity sub-solution, that is, whenever $x_0 \in \Omega$ and $\phi \in C^2(\Omega)$ are such that $u(x_0) = \phi(x_0)$ and
$u(x)<\phi(x)$, when $x \neq x_0$, then
$$
   -\Delta_{\infty} \phi(x_0) \leq 0 \quad \text{and} \quad -|\nabla \phi(x_0)|+\phi_+^{\ell}(x_0) \leq 0;
$$
  \item it is a viscosity super-solution, that is, whenever $x_0 \in \Omega$ and $\phi \in C^2(\Omega)$ are such that $u(x_0) = \phi(x_0)$ and
$u(x)>\phi(x)$, when $x \neq x_0$, then
$$
   -\Delta_{\infty} \phi(x_0) \geq 0 \quad \text{or} \quad -|\nabla \phi(x_0)|+\phi_+^{\ell}(x_0) \geq 0.
$$
\end{enumerate}
\end{definition}

The following Harnack inequality will be useful for our approach.

\begin{theorem}[{{\bf Serrin's Harnack inequality} \cite[Theorem 6 and Theorem 9]{Ser64}}]\label{harnack}
Let $f \in L^{\infty}(B_1)$ and $u$ be a non-negative weak solution to $ -\Delta_p u (x) = f(x) \quad \textit{in} \quad B_1$. Then there exists a constant $C= C(N, p)>0$ such that
\begin{equation*}
	  \displaystyle \mathcal{S}_{\frac{1}{2}}[u] \leq C(N, p)\left( \mathcal{I}_{\frac{1}{2}}[u]+\|f\|_{L^{\infty}(B_1)}^{\frac{1}{p-1}}\right).
\end{equation*}
\end{theorem}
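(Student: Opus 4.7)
The plan is to establish Serrin's Harnack inequality via the classical Moser iteration technique adapted to the quasilinear $p$-Laplacian, combined with a John--Nirenberg argument to bridge the supremum and infimum sides. Since $f \in L^{\infty}(B_1)$, I would first introduce the shifted function $v \defeq u + \kappa$ with $\kappa \defeq \|f\|_{L^{\infty}(B_1)}^{\frac{1}{p-1}}$; this shift absorbs the forcing term into the principal part so that Caccioppoli-type manipulations on $v$ behave as if $v$ were $p$-harmonic, up to constants depending only on $N$ and $p$.

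Step one is to derive a family of Caccioppoli inequalities: for each exponent $\beta \neq 0$ (with $\beta + p - 1 \neq 0$) and each nonnegative cutoff $\eta \in C_c^{\infty}(B_1)$, testing the weak formulation with $\varphi = \eta^p v^{\beta}$ and invoking the chain rule yields
\begin{equation*}
\int_{B_1} \bigl|\nabla v^{\frac{\beta+p-1}{p}}\bigr|^p \eta^p \, dx \leq C(N,p,\beta) \int_{B_1} v^{\beta + p - 1} |\nabla \eta|^p \, dx.
\end{equation*}
Step two is the supremum bound: iterating the above with $\beta > 0$ along a geometric sequence of radii shrinking from $3/4$ to $1/2$, and using the Sobolev embedding $W^{1,p} \hookrightarrow L^{p^{\ast}}$, produces the Moser-type estimate
\begin{equation*}
\mathcal{S}_{\frac{1}{2}}[v] \leq C(N,p,s) \left( \intav{B_{3/4}} v^s \, dx \right)^{\frac{1}{s}}, \qquad s > 0,
\end{equation*}
while a symmetric iteration with $\beta < 1 - p$ yields the complementary weak-type bound
\begin{equation*}
\left( \intav{B_{3/4}} v^{-s} \, dx \right)^{-\frac{1}{s}} \leq C(N,p,s)\, \mathcal{I}_{\frac{1}{2}}[v].
\end{equation*}

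Step three connects the two sides through a logarithmic Caccioppoli estimate: testing the weak equation with $\varphi = \eta^p v^{1-p}$ gives
\begin{equation*}
\int_{B_1} |\nabla \log v|^p \eta^p \, dx \leq C(N,p) \int_{B_1} |\nabla \eta|^p \, dx,
\end{equation*}
which, together with the $L^p$ Poincar\'e inequality, shows that $\log v$ lies in $\mathrm{BMO}(B_{3/4})$ with a universal norm. The John--Nirenberg lemma then produces a small exponent $s_0 = s_0(N,p) > 0$ such that
\begin{equation*}
\left( \intav{B_{3/4}} v^{s_0} \, dx \right) \left( \intav{B_{3/4}} v^{-s_0} \, dx \right) \leq C(N,p).
\end{equation*}
Chaining the three steps and unravelling $v = u + \kappa$ produces the stated inequality, with the additive contribution $\|f\|_{L^{\infty}(B_1)}^{1/(p-1)}$ arising exactly from the shift.

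The main obstacle is the quasilinearity of $\Delta_p$: the chain-rule identities for $|\nabla v|^{p-2} \nabla v$ introduce $\beta$-dependent constants that degenerate as $\beta \to 0$ and $\beta \to -(p-1)$, so the iteration must be orchestrated carefully to keep the final constant uniform (one customarily excludes a thin window around these critical exponents and recovers them by H\"older's inequality). In addition, the John--Nirenberg step hinges on the BMO norm of $\log v$ being independent of $u$; this is precisely what the shift $\kappa$ guarantees, since without it the forcing would contaminate the logarithmic estimate and invalidate the bootstrap from $L^{s_0}$ to $L^{-s_0}$ integrability.
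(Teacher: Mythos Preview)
The paper does not supply its own proof of this theorem: it is quoted verbatim as Serrin's Harnack inequality with a citation to \cite[Theorem 6 and Theorem 9]{Ser64}, and is used only as a black box (in Lemma~\ref{LemmaIter}). There is therefore nothing in the paper to compare your argument against.

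That said, your outline is the standard and correct route to this inequality. The shift $v=u+\|f\|_{L^\infty}^{1/(p-1)}$, the Caccioppoli estimates obtained by testing with $\eta^p v^\beta$, the Moser iteration on both sides, and the John--Nirenberg crossover via the logarithmic estimate are precisely the ingredients in Serrin's original proof (and in later expositions for the $p$-Laplacian). Your caveats about the degeneration of constants near $\beta=0$ and $\beta=-(p-1)$, and about the role of the shift in keeping the BMO bound on $\log v$ independent of $u$, are also on point. At the level of a proof plan nothing is missing; a full write-up would only require tracking the constants through the iteration explicitly.
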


Let us recall an important inequality from Nonlinear Analysis.

\begin{theorem}[{\bf Morrey's inequality}]\label{MorIneq} Let $N<p\leq \infty$. Then for $u \in W^{1, p}(\Omega)$, there exists a constant $\mathfrak{C}(N, p)>0$ such that
$$
  \|u\|_{C^{0, 1-\frac{N}{p}}(\Omega)} \leq \mathfrak{C}(N, p)\|\nabla u\|_{L^{p}(\Omega)}.
$$
\end{theorem}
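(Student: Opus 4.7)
The plan is to reduce to smooth representatives by density, establish a mean value style representation of $u(y)-u(x_0)$ in terms of a Riesz potential of $|\nabla u|$, and then apply H\"older's inequality, with the integrability threshold $p>N$ entering precisely when one controls the singular kernel $|x-z|^{1-N}$.

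First, I would appeal to the density of $C^{\infty}(\overline{\Omega}) \cap W^{1,p}(\Omega)$ in $W^{1,p}(\Omega)$ (after a suitable extension to $\R^N$, assuming $\partial \Omega$ is regular enough for such an extension) so that it suffices to argue for smooth $u$. Then I would prove the averaged pointwise bound
\[
\intav{B_r(x_0)} |u(y)-u(x_0)|\,dy \leq C(N)\int_{B_r(x_0)} \frac{|\nabla u(z)|}{|z-x_0|^{N-1}}\,dz
\]
by writing $u(y)-u(x_0)=\int_0^1 \nabla u(x_0+t(y-x_0))\cdot(y-x_0)\,dt$, passing to spherical coordinates centred at $x_0$, and invoking Fubini.

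Next, for two points $x,y \in \Omega$ with $|x-y|=R$ I would pick a ball $B$ of radius of order $R$ containing both. The triangle inequality $|u(x)-u(y)|\leq|u(x)-u_B|+|u_B-u(y)|$ together with the averaged estimate above reduces the task to controlling a Riesz-type integral. H\"older's inequality with dual exponent $p'=p/(p-1)$ gives
\[
\int_{B} \frac{|\nabla u(z)|}{|z-x|^{N-1}}\,dz \leq \|\nabla u\|_{L^p(B)}\left(\int_B |z-x|^{-(N-1)p'}\,dz\right)^{1/p'},
\]
and the tail integral converges precisely because the hypothesis $p>N$ forces $(N-1)p'<N$. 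A direct radial computation of the remaining integral yields the factor $R^{1-N/p}$, which is exactly the H\"older seminorm bound required. For the $L^{\infty}$ component of the $C^{0,1-N/p}$ norm, one fixes a reference ball $B$ and writes $u(x_0)=u_B+(u(x_0)-u_B)$, bounding the second term by the same machinery and the first by Poincar\'e-type reasoning combined with a boundary argument.

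The principal technical hurdle is the geometric step: selecting a single ball that contains both $x$ and $y$, lies inside $\Omega$, and has radius comparable to $|x-y|$. In full generality this requires a hypothesis on $\partial\Omega$ or an explicit extension argument; the remainder is routine bookkeeping once the H\"older exponent $1-N/p$ is identified through the integrability of the Riesz kernel.
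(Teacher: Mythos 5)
The paper does not actually prove this theorem: it is stated as classical background (Morrey's inequality) and immediately followed by an explicit formula for the constant,
$\mathfrak{C}(N,p) = \frac{2\mathfrak{c}(N)}{|\partial B_1|^{1/p}}\big(\frac{p-1}{p-N}\big)^{\frac{p-1}{p}}$,
without argument. So there is no in-paper proof to compare against. Your plan is the standard textbook proof (Evans; Gilbarg--Trudinger): a Riesz-potential representation of $u(y)-u(x_0)$ via the fundamental theorem of calculus and Fubini, a triangle-inequality decomposition through the ball average $u_B$, and H\"older's inequality against the kernel $|z-x|^{1-N}$, whose $L^{p'}$ integrability is exactly the condition $p>N$. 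It is worth noting that if you carry out the radial computation of $\int_{B_R}|z-x|^{-(N-1)p'}\,dz$, the $1/p'$-th power produces precisely the factor $\big(\frac{p-1}{p-N}\big)^{(p-1)/p}R^{1-N/p}$, which matches the constant the paper quotes; so your outline recovers the paper's explicit $\mathfrak{C}(N,p)$, not merely an abstract constant. You also correctly identify the one genuine technical point, namely the need for an extension or a geometric condition on $\partial\Omega$ to choose a ball of comparable radius containing both points.

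One caveat you should make sharper. As literally written, the inequality $\|u\|_{C^{0,1-N/p}(\Omega)} \leq \mathfrak{C}(N,p)\,\|\nabla u\|_{L^p(\Omega)}$ with the \emph{full} H\"older norm (including the sup norm) on the left cannot hold for arbitrary $u\in W^{1,p}(\Omega)$: take $u$ constant. Your plan senses this and gestures at a ``Poincar\'e-type reasoning combined with a boundary argument'' for the $L^\infty$ component, but such an argument does not exist without an extra hypothesis (a normalization, a vanishing trace, or a term $\|u\|_{L^p}$ on the right). Where the paper actually invokes Theorem \ref{MorIneq} --- in Lemma \ref{Lemma2.4} and in the blow-up argument of Corollary \ref{CorNonDeg} --- only the H\"older \emph{seminorm} bound $\frac{|u(x)-u(y)|}{|x-y|^{1-N/p}}\leq\mathfrak{C}\,\|\nabla u\|_{L^p(\Omega)}$ is used, and that is exactly what your Riesz-potential argument proves. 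You should therefore state cleanly that your argument yields the seminorm estimate, and that the sup-norm component requires an additional normalization that is implicit in the paper's applications rather than claiming it follows from the same machinery.
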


We must highlight that the dependence of $\mathfrak{C}$ on $p$ does not deteriorate as $p \to \infty$. In fact,
$$
  \mathfrak{C}(N, p) \defeq \frac{2\mathfrak{c}(N)}{|\partial B_1|^{\frac{1}{p}}}\left(\frac{p-1}{p-N}\right)^{\frac{p-1}{p}},
$$
where $\mathfrak{c}(N)>0$ is a constant that depends only on the dimension.

We  also use the following  comparison result for weak solutions (see  \cite[Theorem 1.1]{Diaz}).

\begin{lemma}[{\bf Comparison Principle}]\label{comparison} Let $\lambda_0 \in L^{\infty}(\Omega)$ and $f \in C([0, \infty))$ a non-negative and non-decreasing function. Assume that
$$
   -\Delta_p u + \lambda_0(x)f(u)\chi_{\{u>0\}} \geq 0\geq -\Delta_p v  + \lambda_0(x)f(v)\chi_{\{v>0\}} \quad \mbox{in} \quad \Omega
$$
in the weak sense. If $v \leq u $ in $\partial \Omega$ then $v \leq u$ in $\Omega$.
\end{lemma}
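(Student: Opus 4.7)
The plan is a standard Browder--Minty test-function argument. Since $v\leq u$ on $\partial\Omega$, the function $\varphi = (v-u)_+$ lies in $W^{1,p}_0(\Omega)$ and is admissible as a non-negative test function. Rewriting the two weak inequalities with $\varphi$ and subtracting yields
\[
\int_\Omega \bigl(|\nabla v|^{p-2}\nabla v - |\nabla u|^{p-2}\nabla u\bigr)\cdot \nabla (v-u)_+\,dx \;\leq\; \int_\Omega \lambda_0(x)\bigl[f(u)\chi_{\{u>0\}} - f(v)\chi_{\{v>0\}}\bigr](v-u)_+\,dx.
\]

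The next step is to show that the right-hand side is non-positive on the set $\{v>u\}$ where the test function is strictly positive, by a three-case split on the signs of $u,v$: (i) if $v>u>0$, both characteristic functions equal $1$ and monotonicity of $f$ gives $f(u)-f(v)\leq 0$; (ii) if $v>0\geq u$, the bracket collapses to $-f(v)\leq 0$; (iii) if $0\geq v>u$, both characteristics vanish and the bracket is zero. Together with $\lambda_0\geq 0$, the integrand on the right is everywhere non-positive.

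For the left-hand side, I invoke the classical monotonicity of the $p$-Laplace vector field, namely $(|a|^{p-2}a - |b|^{p-2}b)\cdot (a-b) \geq 0$ for $p\geq 2$, with equality only at $a=b$ (in fact $\geq c_p|a-b|^p$ for $p\geq2$). Combining the two one-sided bounds forces the left-hand side to vanish, so $\nabla v = \nabla u$ almost everywhere on $\{v>u\}$, i.e.\ $\nabla (v-u)_+ \equiv 0$ in $\Omega$. Since $(v-u)_+$ has zero trace on $\partial\Omega$, Poincaré's inequality yields $(v-u)_+\equiv 0$, whence $v\leq u$ in $\Omega$.

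The main obstacle is handling the discontinuity introduced by the cut-off $\chi_{\{u>0\}}$: the absorption term is switched off on the coincidence set $\{u=0\}$, and a priori this jump could contribute a positive amount on the right-hand side. The three-case dissection above is precisely where monotonicity of $f$ and non-negativity of $f$ and $\lambda_0$ compensate for that jump; this is the structural reason why a comparison principle remains available even though a Strong Minimum Principle fails for these dead-core problems.
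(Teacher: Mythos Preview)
Your argument is correct and is the standard Browder--Minty monotonicity proof of this comparison principle. Note, however, that the paper does not actually supply a proof of this lemma: it is stated with a reference to D\'{i}az's monograph \cite{Diaz} (Theorem~1.1 there), so there is no ``paper's own proof'' to compare against. Your write-up is exactly the argument one expects in that reference; the only implicit assumption you use beyond the lemma's literal hypotheses is $\lambda_0\geq 0$, which is the standing assumption throughout the paper and is clearly required for the sign analysis on the right-hand side to work.
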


The following lemma gives a relation between weak and viscosity sub and super-solutions to \eqref{Eqp-Lapla}.

\begin{lemma}\label{EquivSols} A continuous weak sub-solution (resp. super-solution) $u \in W_{\text{loc}}^{1,p}(\Omega)$ to \eqref{Eqp-Lapla} is a viscosity sub-solution (resp. super-solution) to
$$
  -\left[ |\nabla u(x)|^{p-2} \Delta u(x) + (p-2)|\nabla u(x)|^{p-4}\Delta_{\infty} u\right] = -\lambda_0(x)u_{+}^{q}(x) \quad \text{in} \quad \Omega.
$$
\end{lemma}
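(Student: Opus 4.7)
My plan is to follow the standard weak-to-viscosity equivalence argument, combining a contradiction hypothesis at a touching point with a comparison test against a small vertical perturbation of the test function. I treat the subsolution case; the supersolution case is symmetric. The crucial preliminary observation has already been noted in the excerpt: for $p \geq 2$ and $\phi \in C^{2}$, the non-divergence expression
$$F[\phi] := |\nabla \phi|^{p-2}\Delta \phi + (p-2)|\nabla \phi|^{p-4}\Delta_{\infty} \phi$$
is well-defined and continuous in $x$, and coincides pointwise with the divergence-form $\Delta_{p}\phi$. Hence a classical pointwise inequality for $F[\phi]$ can be integrated against any $W^{1,p}_{0}$ test function exactly as in the weak formulation.

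Suppose by contradiction that a continuous weak subsolution $u$ fails the viscosity subsolution condition at some $x_{0} \in \Omega$. Then there is $\phi \in C^{2}(\Omega)$ such that $u - \phi$ attains a strict local maximum at $x_{0}$, which after an additive normalization we take with $\phi(x_{0}) = u(x_{0})$, and
$$-F[\phi](x_{0}) + \lambda_{0}(x_{0})\phi_{+}^{q}(x_{0}) > 0.$$
Continuity of $F[\phi]$, $\lambda_{0}$, and $t \mapsto (t)_{+}^{q}$ supplies $\eta>0$ and a ball $B_{r}(x_{0}) \Subset \Omega$ on which $-F[\phi] + \lambda_{0}\phi_{+}^{q} > 2\eta$, so $\phi$ is a classical strict supersolution of \eqref{Eqp-Lapla} in $B_{r}(x_{0})$.

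Next I perturb $\phi$ downward. Let $m(r) := \max_{\partial B_{r}(x_{0})}(u - \phi)$, which is strictly negative by strict maximality at $x_{0}$, and set $\tilde{\phi} := \phi + m(r)/2$. From $m(r) < 0$ one reads off $u \leq \tilde{\phi}$ on $\partial B_{r}(x_{0})$ and $u(x_{0}) > \tilde{\phi}(x_{0})$, so $\varphi := (u - \tilde{\phi})_{+}$ is a non-trivial, non-negative element of $W^{1,p}_{0}(B_{r}(x_{0}))$. By shrinking $r$ further so that $|m(r)|$ becomes small, uniform continuity of $t \mapsto (t)_{+}^{q}$ on bounded intervals preserves the strict supersolution property for $\tilde{\phi}$ with (say) gap $\eta$. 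Pairing the weak subsolution inequality for $u$ with the classical strict supersolution inequality for $\tilde{\phi}$ against $\varphi$ and subtracting gives
$$\int_{B_{r}(x_{0})}\bigl(|\nabla u|^{p-2}\nabla u - |\nabla \tilde{\phi}|^{p-2}\nabla \tilde{\phi}\bigr)\cdot \nabla \varphi\, dx \leq -\int_{B_{r}(x_{0})} \lambda_{0}(u_{+}^{q} - \tilde{\phi}_{+}^{q})\varphi\, dx - \eta \int_{B_{r}(x_{0})} \varphi\, dx.$$
The left-hand side is $\geq 0$ by the classical monotonicity of $z \mapsto |z|^{p-2}z$, while on $\{u > \tilde{\phi}\}$ a short case analysis (with no sign hypothesis on $u$) gives $u_{+}^{q} \geq \tilde{\phi}_{+}^{q}$. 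Since $\varphi \not\equiv 0$ (it is positive near $x_{0}$), the right-hand side is strictly negative, which is the desired contradiction.

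The delicate point is the shift itself: the absorption $\lambda_{0}u_{+}^{q}$ is not invariant under vertical translation of $\phi$, so a careless shift can destroy the strict supersolution property. This is handled by the two-stage choice of $r$: first pin down the gap $2\eta$ in the strict supersolution inequality, then force $|m(r)|$ small enough that the modulus of continuity of $t \mapsto (t)_{+}^{q}$ eats at most half of that gap after the shift. This is the only place where the nonlinear absorption requires genuine care beyond the classical weak-to-viscosity equivalence for the pure $p$-Laplacian.
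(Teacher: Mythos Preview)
Your proof is correct and follows essentially the same route as the paper's: argue by contradiction at a touching point, shift the $C^2$ test function vertically by a fraction of the boundary gap, plug the positive part of the difference into both weak formulations, and conclude via the monotonicity of $z\mapsto|z|^{p-2}z$. The only cosmetic difference is that the paper (which writes out the super-solution case) keeps the unshifted $\phi_{+}^{q}$ on the right-hand side and signs the absorption term directly from $\phi\le u$ in $B_r(x_0)$, whereas you introduce the quantitative gap $\eta$ and the two-stage choice of $r$ to absorb the change in $\tilde\phi_{+}^{q}$; this is a slightly more careful bookkeeping of the same idea.
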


\begin{proof} Let us proceed for the case of super-solutions. Fix $x_0 \in \Omega$ and $\phi \in C^2(\Omega)$ such that $\phi$ touches $u$ from below, i.e., $u(x_0) = \phi(x_0)$ and $u(x)> \phi(x)$ for $x \neq x_0$. Our goal is to show that
$$
  -\left[|\nabla \phi(x_0)|^{p-2}\Delta \phi(x_0) + (p-2)|\nabla \phi(x_0)|^{p-4}\Delta_{\infty} \phi(x_0)\right] + \lambda_0(x_0)\phi_{+}^{q}(x_0) \geq 0.
$$
Let us suppose, for the sake of contradiction, that the inequality does not hold. Then, by continuity there exists  $r>0$ small enough such that
$$
   -\left[ |\nabla \phi(x)|^{p-2}\Delta \phi(x) + (p-2)|\nabla \phi(x)|^{p-4}\Delta_{\infty} \phi(x)\right] +\lambda_0(x)\phi_{+}^{q}(x) < 0,
$$
provided that $x \in B_r(x_0)$. Now, we define the function
$$
   \Psi(x) \defeq \phi(x)+ \frac{1}{100}\mathfrak{m}, \quad \text{ where } \quad \mathfrak{m} \defeq \inf_{\partial B_r(x_0)} (u(x)-\phi(x)).
$$
Notice that $\Psi$ verifies $\Psi < u$ on $\partial B_r(x_0)$, $\Psi(x_0)> u(x_0)$ and
\begin{equation}\label{EqPsi}
 -\Delta_p \Psi(x) < -\lambda_0(x)\phi_{+}^{q}(x).
\end{equation}
By extending by zero outside  $B_r(x_0)$, we may use $(\Psi-u)_{+}$ as a test function in \eqref{Eqp-Lapla}. Moreover, since $u$ is a weak super-solution, we obtain
\begin{equation}\label{Eq3.4}
  \displaystyle \int_{\{\Psi>u\}} |\nabla u|^{p-2}\nabla u \cdot \nabla (\Psi-u) dx \geq  -\int_{\{\Psi>u\}} \lambda_0(x_0)u_{+}^{q}(x)(\Psi-u) dx.
\end{equation}
On the other hand, multiplying \eqref{EqPsi} by $\Psi- u$ and integrating by parts we get
\begin{equation}\label{Eq3.5}
  \displaystyle \int_{\{\Psi>u\}} |\nabla \Psi|^{p-2}\nabla \Psi \cdot \nabla (\Psi-u) dx <  -\int_{\{\psi>u\}} \lambda_0(x)\phi_{+}^{q}(x)(\Psi-u) dx.
\end{equation}
Next, subtracting \eqref{Eq3.4} from \eqref{Eq3.5} we obtain
\begin{equation} \label{E2.6}
\displaystyle  \int\limits_{\{\Psi>u\}} \left(|\nabla \Psi|^{p-2}\nabla \Psi - |\nabla u|^{p-2}\nabla u\right) \cdot \nabla (\Psi-u) dx <  \int\limits_{\{\psi>u\}} \lambda_0(x)\left(\phi_{+}^{q}(x)-u_{+}^{q}(x)\right)(\Psi-u)dx <0.
\end{equation}
This implies that
$$\int\limits_{\{\Psi>u\}} |\nabla(\Psi - u)|^p dx < 0.$$
This is a contradiction that proves the desired result.

Similarly, one can prove that a continuous weak sub-solution is a viscosity sub-solution.
\end{proof}

\section{Regularity properties for fixed $(p,q)$}

\subsection{Non-degeneracy of solutions}\label{SecNond}

This section is devoted to prove a weak geometrical property which plays a key role in the description of solutions to dead core type problems, namely the \emph{non-degeneracy} of solutions.

\begin{proof}[{\bf Proof of Theorem \ref{LGR}}]
Notice that, due to the continuity of solutions, we have to prove the estimate just at points in $\{u>0\} \cap \Omega$.

First of all, let us consider the scaled function:
$$u_r(x) \defeq \frac{u(x_0+rx)}{r^{\frac{p}{p-1-q}}}.$$
It is easy to show that it is a weak sub-solution to the equation
$$
    -\Delta_p u_r(x) + \hat{\lambda}_0(x)(u_r)_{+}^{q}(x) = 0 \quad \text{in} \quad B_1,
$$
with $\hat\lambda_0(x) \defeq \lambda_0(x_0 + rx)$.

Now, let us introduce the auxiliary barrier function $\displaystyle \Psi (x) \defeq \mathfrak{C}_0 |x|^{\frac{p}{p-1-q}}$ for a positive constant $\mathfrak{C}_0$ given by
$$
  \displaystyle \mathfrak{C}_0 \defeq \left[\inf_{\Omega}\lambda_0(x) \frac{ \left(p-1-q \right)^{p} }{ p^{p-1}(pq+N(p-1-q) )}\right]^{\frac{1}{p-1-q}}.
$$
A straightforward computation shows that
$$
     -\Delta_p \Psi (x)+ \hat{\lambda}_0\left( x  \right)\Psi^{q}(x) \geq 0  \quad \text{in} \quad B_1 \quad (pointwisely).
$$

Finally, if $u_r \leq \Psi$ on the whole boundary of $B_1$, then the Comparison Principle (Lemma \ref{comparison}) implies that
$$
   u_r \leq \Psi \quad \mbox{in} \quad B_1,
$$
which  contradicts the assumption that $u_r(0)>0$. Therefore, there exists a point $y \in \partial B_1$ such that
$$
      u_r(y) > \Psi(y) = \mathfrak{C}_0,
$$
and scaling back we finish the proof of the theorem.
\end{proof}

As a consequence of  Theorem \ref{LGR} we obtain the following  growth near the free boundary:

\begin{corollary}[{\bf Sharp growth}]\label{CorNonDeg}
Let $u$ be a nonnegative, bounded weak solution to \eqref{Eqp-Lapla} in $\Omega$ and   $\Omega^{\prime} \Subset \Omega$. Then, there exists a universal constant $\mathfrak{C}_{\sharp}>0$ such that
$$
  u(x_0) \geq \mathfrak{C}_{\sharp} [\dist(x_0, \partial \{u>0\})]^{\frac{p}{p-1-q}}
$$
for any $x_0 \in \{u>0\} \cap \Omega^{\prime}$.
\end{corollary}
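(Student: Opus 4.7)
The plan is to combine the non-degeneracy bound of Theorem \ref{LGR} with Serrin's Harnack inequality (Theorem \ref{harnack}), controlling the right-hand side $\lambda_0(x) u^q$ through the improved regularity of Theorem \ref{ThmGR1}. Write $d \defeq \dist(x_0, \partial\{u>0\})$ and $\alpha \defeq \tfrac{p}{p-1-q}$, and let $y_0 \in \partial\{u>0\}$ realize the distance, so $|x_0-y_0|=d$. A standard compactness argument disposes of the regime $d \gtrsim 1$, so I may assume $d \ll \min\{1,\dist(\Omega',\partial\Omega)\}$.

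Since $B_d(x_0) \subset \{u>0\}$, $u$ is a strictly positive weak solution of $-\Delta_p u = -\lambda_0(x) u^q$ in this ball. For every $x \in B_d(x_0)$ one has $|x-y_0| \leq |x-x_0|+d \leq 2d$, so Theorem \ref{ThmGR1} applied at the free-boundary point $y_0$ gives $u(x) \leq \mathfrak{C}_1\|u\|_{L^{\infty}(\Omega)}(2d)^{\alpha}$, and hence the universal estimate $\|\lambda_0 u^q\|_{L^\infty(B_d(x_0))} \leq \mathfrak{M}\, d^{\alpha q}$ for some universal $\mathfrak{M}>0$.

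Now fix a universal parameter $\sigma \in (0, 1]$ to be chosen. Theorem \ref{LGR} at $x_0$ with radius $\sigma d/2$ yields $\sup_{B_{\sigma d/2}(x_0)} u \geq \mathfrak{C}_0 (\sigma d/2)^\alpha$, while the scaled Serrin Harnack inequality on $B_{\sigma d}(x_0)$ reads
\begin{equation*}
\sup_{B_{\sigma d/2}(x_0)} u \;\leq\; C(N,p) \Bigl[\,\inf_{B_{\sigma d/2}(x_0)} u \,+\, (\sigma d)^{\frac{p}{p-1}}\, \mathfrak{M}^{\frac{1}{p-1}}\, d^{\frac{\alpha q}{p-1}}\,\Bigr].
\end{equation*}
The algebraic identity $\tfrac{p}{p-1} + \tfrac{\alpha q}{p-1} = \alpha$ collapses the error term to $C_\ast \sigma^{\frac{p}{p-1}} d^{\alpha}$. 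Combining the two inequalities, using $\inf_{B_{\sigma d/2}(x_0)} u \leq u(x_0)$, and dividing by $d^\alpha$ yields the key relation
\begin{equation*}
\mathfrak{C}_0 \Bigl(\frac{\sigma}{2}\Bigr)^{\alpha} \;\leq\; C(N,p)\, \frac{u(x_0)}{d^{\alpha}} \,+\, C_\ast\, \sigma^{\frac{p}{p-1}}.
\end{equation*}

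Because $\alpha > \tfrac{p}{p-1}$, the non-degeneracy lower bound (of order $\sigma^\alpha$) dominates the Harnack error (of order $\sigma^{p/(p-1)}$) when $\sigma$ is close to $1$, provided the raw constants cooperate. This is precisely the main obstacle: one must calibrate $\mathfrak{C}_0$, $\mathfrak{C}_1$, $C(N,p)$, $\|\lambda_0\|_\infty$ and $\|u\|_{L^\infty(\Omega)}$ so that the bracket $\mathfrak{C}_0 (\sigma/2)^\alpha - C_\ast \sigma^{p/(p-1)}$ is positive. This can be ensured by a preliminary $L^\infty$-normalization of $u$ (absorbing the normalization factor into the final constant), after which the choice $\sigma = 1$ gives the desired inequality $u(x_0) \geq \mathfrak{C}_\sharp d^{\alpha}$ with $\mathfrak{C}_\sharp > 0$ universal.
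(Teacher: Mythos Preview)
Your argument breaks down at the final step. After reaching
\[
\mathfrak{C}_0\Bigl(\frac{\sigma}{2}\Bigr)^{\alpha}\;\le\; C(N,p)\,\frac{u(x_0)}{d^{\alpha}}\;+\;C_\ast\,\sigma^{\frac{p}{p-1}},
\]
you need $\mathfrak{C}_0(\sigma/2)^{\alpha}-C_\ast\,\sigma^{p/(p-1)}>0$ for some $\sigma\in(0,1]$. But the exponent comparison goes the \emph{wrong way}: since $\alpha>\tfrac{p}{p-1}$ and $0<\sigma\le 1$, one has $\sigma^{\alpha}\le\sigma^{p/(p-1)}$, with equality only at $\sigma=1$. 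The parameter $\sigma$ therefore gives no leverage whatsoever; the best choice is $\sigma=1$, which demands the fixed inequality $\mathfrak{C}_0\,2^{-\alpha}>C_\ast$. There is no structural reason for this to hold---the Harnack constant $C(N,p)$, the ratio $\|\lambda_0\|_{L^\infty}/\inf\lambda_0$, and $\|u\|_{L^\infty}$ can easily force $C_\ast$ to be large.

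The proposed $L^\infty$-normalization does not rescue the argument. Replacing $u$ by $u/\tau$ forces $\lambda_0\mapsto\tau^{\,q-p+1}\lambda_0$, and a direct check shows that $\mathfrak{C}_0\mapsto\tau^{-1}\mathfrak{C}_0$ while also $C_\ast\mapsto\tau^{-1}C_\ast$ (the factor $\|\lambda_0\|_{L^\infty}\,(\sup u)^{q}$ picks up exactly $\tau^{-(p-1)}$). Hence the sign of $\mathfrak{C}_0\,2^{-\alpha}-C_\ast$ is scale-invariant, and no normalization can flip it. In short, Harnack plus Theorem~\ref{ThmGR1} alone are not enough to close the loop directly.

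The paper takes a different route: it argues by contradiction through a blow-up. Assuming a sequence $x_k$ with $u(x_k)\le k^{-1}d_k^{\alpha}$, it rescales $v_k(y)=u(x_k+d_ky)/d_k^{\alpha}$ and shows that $\sup_{B_{1/2}}v_k\to 0$, contradicting the non-degeneracy of Theorem~\ref{LGR}. The contradiction hypothesis itself supplies the smallness that your $\sigma$ cannot manufacture.
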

\begin{proof}
  Suppose that $\mathfrak{C}_{\sharp}$ does not exist. Then there exist a sequence $x_k \in \{u>0\} \cap \Omega^{\prime}$ with
$$
d_k \defeq \dist(x_k, \partial \{u>0\} \cap \Omega^{\prime}) \to 0 \quad \text{as} \quad k \to \infty \quad \text{and} \quad u(x_k) \leq  k^{-1} d_k^{\frac{p}{p-1-q}}.
$$
Now, let us define the auxiliary function $v_k:B_1 \to \R$ by
$$
   v_k(y) \defeq \frac{u(x_k+d_ky)}{d_k^{\frac{p}{p-1-q}}}.
$$
It is easy to check that
\begin{itemize}
  \item[\checkmark] $v_k \geq 0$ in $B_1$.
  \item[\checkmark] $-\Delta_p v_k + \lambda_0(x_k+d_ky)v_k^q = 0$ in $B_1$ in the weak sense.
  \item[\checkmark] $v_k(y) \leq \mathfrak{C}(N, p) d_k^{1-\frac{N}{p}} + \frac{1}{k} \,\, \forall\,\, y \in B_1$ according to local H\"{o}lder regularity of weak solutions, see Theorem \ref{MorIneq}.
\end{itemize}
From the Non-degeneracy Theorem \ref{LGR} and the last sentence we obtain that
\begin{equation}\label{estim.44}
  \displaystyle 0<\mathfrak{C}_0 \left(\frac{1}{2}\right)^{\frac{p}{p-1-q}} \leq \sup_{B_{\frac{1}{2}}} v_k(y) \leq \max\{1, \mathfrak{C}(N, p)\} \left(d_k^{1-\frac{N}{p}}+\frac{1}{k}\right) \to 0 \quad \text{as} \quad k \to \infty,
\end{equation}
which clearly yields a contradiction. This concludes the proof.
\end{proof}

The Non-degeneracy estimate from Corollary \ref{CorNonDeg} implies in particular a non-degeneracy property in measure. As it was commented in the introduction, this estimate is useful in several qualitative contexts of the theory of free boundary problems. We refer to \cite[Theorem 4.4]{SS} for a proof.

\begin{theorem}[{\bf Non-degeneracy in measure}]\label{ThmNDinM} Let $u$ be a weak solution to \eqref{Eqp-Lapla}. Given $\Omega^{\prime} \subset \Omega$ there exist $\rho_0>0$ and $\kappa>0$ depending only on $\Omega^{\prime}$ and universal parameters such that
$$
  \mathcal{L}^N\left(\Omega^{\prime} \cap \{0<u(x)<\rho^{\frac{p}{p-1-q}}\}\right)\leq \kappa\rho \qquad \text{ for any } \quad \rho\leq \rho_0.
$$
\end{theorem}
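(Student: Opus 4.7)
The plan is to combine the sharp growth estimate of Corollary \ref{CorNonDeg} with a Vitali-type covering of the free boundary whose cardinality is controlled by means of the non-degeneracy (Theorem \ref{LGR}) and the improved one-sided bound (Theorem \ref{ThmGR1}).

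First, I would trap the target set in a thin tubular neighborhood of the free boundary. If $x \in \Omega'$ satisfies $0 < u(x) < \rho^{p/(p-1-q)}$, then Corollary \ref{CorNonDeg} gives
$$\mathfrak{C}_\sharp\,[\dist(x,\partial\{u>0\})]^{p/(p-1-q)} \leq u(x) < \rho^{p/(p-1-q)},$$
and hence $\dist(x,\partial\{u>0\}) \leq K\rho$ with $K \defeq \mathfrak{C}_\sharp^{-(p-1-q)/p}$. Choosing intermediate domains $\Omega' \Subset \Omega'' \Subset \Omega$ and $\rho_0 > 0$ so small that $K\rho_0 < \dist(\Omega',\partial\Omega'')$, the set
$$E_\rho \defeq \Omega' \cap \{0<u<\rho^{p/(p-1-q)}\}$$
is contained in the $K\rho$-tubular neighborhood of $\partial\{u>0\} \cap \Omega''$ for every $\rho \leq \rho_0$.

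Second, I would select a maximal family $\{x_j\}_{j=1}^m \subset \partial\{u>0\} \cap \Omega''$ with pairwise disjoint cores $\{B_{\rho/5}(x_j)\}_j$; a standard Vitali argument then guarantees that $\bigcup_j B_\rho(x_j) \supset \partial\{u>0\} \cap \Omega''$ and hence $\bigcup_j B_{(1+K)\rho}(x_j) \supset E_\rho$. To close the argument one needs the packing bound $m \lesssim \rho^{-(N-1)}$, since the naive estimate $m \lesssim \rho^{-N}$ coming from the disjointness of the cores alone yields only the useless inequality $|E_\rho| \leq C$. The refinement relies on using Theorem \ref{LGR} to locate, in every $B_{\rho/5}(x_j)$, a point $y_j$ where $u(y_j) \geq c\,\rho^{p/(p-1-q)}$, and then on Theorem \ref{ThmGR1} (or the gradient estimate $|\nabla u| \leq c_*\,\dist(\cdot,\partial\{u>0\})^{(1+q)/(p-1-q)}$ anticipated in the introduction) to force a concentric sub-ball around $y_j$ of universally controlled radius to stay inside $\{u>0\}$. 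This produces the uniform positive density $\mathcal{L}^N(B_r(x_0) \cap \{u>0\}) \geq c_0 r^N$ at every $x_0 \in \partial\{u>0\} \cap \Omega''$, which, by standard arguments in geometric measure theory, implies that $\partial\{u>0\} \cap \Omega''$ has locally finite upper $(N-1)$-Minkowski content and hence admits a covering by $O(\rho^{-(N-1)})$ balls of radius $\rho$. Plugging this into the Vitali covering delivers
$$\mathcal{L}^N(E_\rho) \leq m\,\omega_N\,((1+K)\rho)^N \leq \kappa\,\rho.$$

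The main obstacle is precisely this last improvement, from the trivial $\rho^{-N}$ packing bound to the sharp $\rho^{-(N-1)}$ one, equivalently the finiteness of the codimension-one Minkowski content of the free boundary; this is where the combined strength of Theorems \ref{LGR} and \ref{ThmGR1} is genuinely used, since non-degeneracy by itself would only produce porosity of the coincidence set, which falls short of the correct codimension-one behaviour needed for the linear-in-$\rho$ estimate.
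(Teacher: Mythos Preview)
The paper does not supply a proof of this theorem; it only refers the reader to \cite[Theorem 4.4]{SS}. Your proposal, however, contains a genuine gap at exactly the step you flag as ``the main obstacle.''

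You assert that the uniform positive density
\[
\mathcal{L}^N\bigl(B_r(x_0)\cap\{u>0\}\bigr)\ \geq\ c_0\,r^N
\]
at every free boundary point, ``by standard arguments in geometric measure theory, implies that $\partial\{u>0\}\cap\Omega''$ has locally finite upper $(N-1)$-Minkowski content.'' This implication is false. One-sided (or even two-sided) uniform positive density yields only porosity of the boundary, and porosity forces the Hausdorff dimension strictly below $N$---this is precisely the Koskela--Rohde theorem \cite{KR} invoked later in Corollary \ref{CorPor}---but it does \emph{not} push the dimension down to $N-1$, let alone give finite $(N-1)$-Minkowski content. Subgraph domains of nowhere-differentiable H\"older functions already furnish open sets with positive lower density at every boundary point whose boundary has Hausdorff and Minkowski dimension strictly between $N-1$ and $N$. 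The pointwise two-sided control $u\asymp\dist(\cdot,\partial\{u>0\})^{p/(p-1-q)}$ coming from Theorems \ref{LGR} and \ref{ThmGR1} does not rule out such geometry either: the function $\dist(\cdot,\Gamma)^{\alpha}$ for a fractal $\Gamma$ satisfies the same growth without $\Gamma$ being rectifiable.

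Consequently your Vitali scheme only produces the trivial packing bound $m\lesssim\rho^{-N}$, and hence $\mathcal{L}^N(E_\rho)\leq C$ (or at best $C\rho^{\varepsilon}$ via porosity), not the linear-in-$\rho$ estimate claimed. Upgrading to $m\lesssim\rho^{-(N-1)}$ requires genuine additional input from the equation---typically an integration-by-parts or energy-comparison argument using the weak formulation of $-\Delta_p u+\lambda_0 u^q=0$ with a test function adapted to the level set $\{0<u<\rho^{p/(p-1-q)}\}$---rather than only the pointwise growth bounds. That is the missing ingredient in your outline.
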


\subsection{Growth rate near of free boundary}\label{SecImpReg}

Before starting the proof of Theorem \ref{ThmGR1} let us introduce a class of functions and sets which play an essential role in our strategy. Our approach is inspired in \cite{KKPS} and \cite{LeeShah} (cf. \cite{LRS}, \cite{OS}, \cite{OSS} and \cite{SS} for a similar strategy in dead core settings).

\begin{definition}\label{Smartcalss} We say that $u \in \mathfrak{J}_p(B_1)$ if
\begin{enumerate}
  \item[\checkmark] $\left\|\div(|\nabla u|^{p-2}\nabla u)\right\|_{L^{\infty}(B_1)} = \left\|\lambda_0u_{+}^q\right\|_{L^{\infty}(B_1)} \leq 1$.
  \item[\checkmark] $0 \leq u \leq 1$.
  \item[\checkmark] $u(0)=0$.
\end{enumerate}
\end{definition}

Next, we define for $u \in \mathfrak{J}_p(B_1)$ the following set with a ``doubling type property''
\begin{equation}\label{EqDoubProp}
    \mathfrak{D}[u] \defeq \left\{j \in \mathbb{N}\cup\{0\}\colon  2^{\frac{p}{p-1-q}}\max\left\{1, \frac{1}{\mathfrak{C}_0}\right\} \mathcal{S}_{\frac{1}{2^{j+1}}}[u] \geq \mathcal{S}_{\frac{1}{2^j}}[u]\right\},
\end{equation}
where in this context $\mathfrak{C}>0$ is the universal constant from the non-degeneracy property given in Theorem \ref{LGR}. Moreover, observe that $\mathfrak{D}[u]$ is not empty since $j=0\in \mathfrak{D}[u]$ due to Theorem \ref{LGR}:
$$
   \mathcal{S}_{\frac{1}{2}}[u] \geq \mathfrak{C}_0\left(\frac{1}{2}\right)^{\frac{p}{p-1-q}} \geq \mathfrak{C}_0\left(\frac{1}{2}\right)^{\frac{p}{p-1-q}}\mathcal{S}_{1}[u] \quad \implies  \quad
  \mathcal{S}_{1}[u]\leq 2^{\frac{p}{p-1-q}}\max\left\{1,\frac{1}{\mathfrak{C}_0} \right\}\mathcal{S}_{\frac{1}{2}}[u].
$$

The following results are the key in order to prove Theorem \ref{ThmGR1}. Heuristically, they state that as the \textit{Thiele modulus} becomes flat enough, the limiting configuration of the $p-$dead-core problem imposes, according to Harnack inequality, that any solution must be identically zero.
Thus, by compactness methods, solutions to \eqref{Eqp-Lapla} become very flat with a sharp geometric decay (near free boundary points) provided $\lambda_0$ is under control.

\begin{lemma}\label{LemmaIter} There exists a positive constant $\mathfrak{C}_1 =  \mathfrak{C}_1(N, p, q, \|\lambda_0\|_{L^{\infty}(\Omega)})$ such that
  \begin{equation}\label{Eqiter}
     \mathcal{S}_{\frac{1}{2^{j+1}}}[u] \leq \mathfrak{C}_1 \left(\frac{1}{2^j}\right)^{\frac{p}{p-1-q}}
  \end{equation}
  for all $u \in \mathfrak{J}_p(B_1)$ and $j \in \mathfrak{D}[u]$.
\end{lemma}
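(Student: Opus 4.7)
The plan is a blow-up/compactness argument, in the spirit of the heuristic sketched just before the statement and of the techniques in \cite{KKPS, LeeShah, Tei16, Tei-16}: if the claimed estimate fails for every constant, one manufactures a normalized blow-up sequence whose locally uniform limit is a non-negative $p$-harmonic function in $B_1$ that vanishes at the origin yet is non-trivial on $B_{1/2}$. Serrin's Harnack inequality (Theorem \ref{harnack}) then forces it to vanish identically on $B_{1/2}$, delivering the contradiction.

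The first step is the setup. I would suppose, for contradiction, that for every $k \in \mathbb{N}$ there exist $u_k \in \mathfrak{J}_p(B_1)$ and $j_k \in \mathfrak{D}[u_k]$ with
$$
\mathcal{S}_{1/2^{j_k+1}}[u_k] \;>\; k \left(\frac{1}{2^{j_k}}\right)^{p/(p-1-q)}.
$$
Writing $A_k := \mathcal{S}_{1/2^{j_k}}[u_k]$, I introduce
$$
v_k(x) := \frac{u_k(2^{-j_k}x)}{A_k}, \qquad x \in B_1.
$$
A direct change of variables shows that $v_k$ is a non-negative weak solution of
$$
-\Delta_p v_k + \mu_k \, \lambda_0(2^{-j_k}\cdot) \, v_k^q = 0 \quad \text{in } B_1, \qquad \mu_k := \frac{1}{A_k^{p-1-q} \, 2^{j_k p}},
$$
with $0 \le v_k \le 1$, $v_k(0)=0$ and $\sup_{B_1} v_k = 1$. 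Membership $j_k \in \mathfrak{D}[u_k]$ supplies the crucial lower bound
$$
\sup_{B_{1/2}} v_k \;=\; \frac{\mathcal{S}_{1/2^{j_k+1}}[u_k]}{A_k} \;\ge\; \delta := \bigl(2^{p/(p-1-q)} \max\{1, 1/\mathfrak{C}_0\}\bigr)^{-1} > 0,
$$
while the failure hypothesis $A_k \ge k (1/2^{j_k})^{p/(p-1-q)}$ forces $\mu_k \le k^{-(p-1-q)} \to 0$, so the absorption term disappears uniformly in $B_1$ as $k \to \infty$.

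The second step is compactness and passage to the limit. The right-hand sides in the PDE for $v_k$ are uniformly bounded in $L^\infty(B_1)$ by $\|\lambda_0\|_{L^\infty}$, and the $v_k$ themselves lie in $[0,1]$, so the DiBenedetto/Tolksdorf interior $C^{1,\alpha}_{\mathrm{loc}}$ estimates give uniform equicontinuity. Arzelà--Ascoli produces, along a subsequence, a uniform limit $v_\infty \in C(B_1)$, and the standard stability result for divergence-form quasilinear equations with uniformly bounded RHS (or monotonicity of the $p$-Laplacian) identifies $v_\infty$ as a weak solution of $-\Delta_p v_\infty = 0$ in $B_1$, with $v_\infty \ge 0$, $v_\infty(0) = 0$ and $\sup_{B_{1/2}} v_\infty \ge \delta > 0$. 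Applying Serrin's Harnack inequality (Theorem \ref{harnack}) to the non-negative $p$-harmonic function $v_\infty$ yields
$$
\sup_{B_{1/2}} v_\infty \;\le\; C(N,p) \inf_{B_{1/2}} v_\infty \;\le\; C(N,p) \, v_\infty(0) = 0,
$$
contradicting $\sup_{B_{1/2}} v_\infty \ge \delta > 0$.

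The step I expect to require the most care is making sure the compactness really produces a \emph{uniform} constant $\mathfrak{C}_1 = \mathfrak{C}_1(N, p, q, \|\lambda_0\|_{L^\infty})$: the quantitative inputs---the doubling gap $\delta$, the decay rate of $\mu_k$, and the $C^{1,\alpha}$ moduli---must all be universal across the class $\mathfrak{J}_p(B_1)$. This is the case because after rescaling the only surviving structural data are $N, p, q$ and $\|\lambda_0\|_{L^\infty}$, and the Hölder constants for the $p$-Laplacian with bounded RHS depend only on those. A small bookkeeping remark is that $\mathfrak{D}[u]$ already involves $\mathfrak{C}_0$ from the non-degeneracy theorem, so $\delta$, and hence the final $\mathfrak{C}_1$, depend only on the universal parameters, as demanded by the statement.
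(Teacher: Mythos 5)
Your proof is correct, but it takes a detour that the paper avoids. Both you and the paper reduce the claim by contradiction to a rescaled sequence $v_k$ solving the $p$-Laplace equation with a right-hand side whose $L^\infty$ norm tends to zero as $k\to\infty$, and both exploit the crucial fact that $v_k(0)=0$ while $\sup_{B_{1/2}}v_k$ stays bounded away from zero (you via the doubling gap $\delta$, the paper via its normalization $\mathcal{S}_{1/2}[v_k]=1$). From there the routes diverge. The paper applies Serrin's Harnack inequality (Theorem \ref{harnack}) \emph{directly to each $v_k$}: since $v_k\ge 0$ and $v_k(0)=0$, one has $\mathcal{I}_{1/2}[v_k]=0$, so
$$
1=\mathcal{S}_{1/2}[v_k]\le C(N,p)\Bigl(\mathcal{I}_{1/2}[v_k]+\|f_k\|_{L^\infty}^{1/(p-1)}\Bigr)=C(N,p)\,\|f_k\|_{L^\infty}^{1/(p-1)}\longrightarrow 0,
$$
an immediate contradiction. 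You instead normalize by $A_k=\mathcal{S}_{1/2^{j_k}}[u_k]$ (rather than $\mathcal{S}_{1/2^{j_k+1}}[u_k]$), invoke $C^{1,\alpha}_{\mathrm{loc}}$ estimates and Arzel\`a--Ascoli to extract a locally uniform limit $v_\infty$, argue stability of the equation to show $v_\infty$ is $p$-harmonic, and only then invoke Harnack. That works, but it burns three extra tools (interior gradient regularity, compactness, and PDE stability under uniform convergence) to get a conclusion the paper reaches in one line, since the lower bound $\mathcal{S}_{1/2}[v_k]\ge\delta>0$ plus $\mathcal{I}_{1/2}[v_k]=0$ already contradicts Harnack for each large $k$ without ever passing to a limit. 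In short: correct and structurally kindred, but the compactness/blow-up-limit machinery is dispensable here, and the paper's version is leaner.
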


\begin{proof} We proceed by contradiction. Suppose that the thesis of the lemma fails to hold. Then for each $k \in \mathbb{N}$ we may find $u_k \in \mathfrak{J}_p(B_1)$ and $j_k \in \mathfrak{D}[u]$ such that
  \begin{equation}\label{Eqcont}
     \mathcal{S}_{\frac{1}{2^{j_k+1}}}[u_k] > k \left(\frac{1}{2^{j_k}}\right)^{\frac{p}{p-1-q}}.
  \end{equation}
Now, define the auxiliary function
$$
   v_k(x) \defeq \frac{u_k(\frac{1}{2^{j_k}}x)}{\mathcal{S}_{\frac{1}{2^{j_k+1}}}[u_k]} \quad \mbox{in} \quad B_1,
$$
and notice that $v_k$ fulfills
\begin{enumerate}
  \item[\checkmark] $\displaystyle 0 \leq v_k(x) \leq \frac{\mathcal{S}_{\frac{1}{2^{j_k}}}[u_k]}{\mathcal{S}_{\frac{1}{2^{j_k+1}}}[u_k]} \leq 2^{\frac{p}{p-1-q}}\max\left\{1, \frac{1}{\mathfrak{C}_0}\right\}$ form \eqref{EqDoubProp} and $v_k(0) = 0$ for all $k$;
  \item[\checkmark] $\mathcal{S}_{\frac{1}{2}}[v_k] = 1$ for all $k$;
  \item[\checkmark] $\displaystyle -\div(|\nabla v_k|^{p-2}\nabla v_k) + \frac{1}{2^{j_k.p}}\frac{1}{\mathcal{S}^{p-1-q}_{\frac{1}{2^{j_k+1}}}[u_k]}\lambda_0\left(\frac{1}{2^{j_k}}x\right)(v_k)^q_{+}= 0 $.
\end{enumerate}
Moreover, by the contradiction assumption we get
$$
   \left\|\frac{1}{2^{j_k.p}}\frac{1}{\mathcal{S}^{p-1-q}_{\frac{1}{2^{j_k+1}}}[u_k]}\lambda_0\left(\frac{1}{2^{j_k}}x\right)(v_k)^q_{+}(x)\right\|_{L^{\infty}(B_1)} \leq \left(\frac{2^p}{k}\right)^{\frac{1}{p-1-q}}\max\left\{1, \frac{1}{\mathfrak{C}_0}\right\}\|\lambda_0\|_{L^{\infty}(B_1)}.
$$
Invoking the Harnack's inequality given in Theorem \ref{harnack} we obtain
$$
  1 = \mathcal{S}_{\frac{1}{2}}[v_k] \leq C(N, p)\left[\mathcal{I}_{\frac{1}{2}}[v_k] + \sqrt[p-1]{\left(\frac{2^{p}}{k}\right)^{\frac{1}{p-1-q}}\max\left\{1, \frac{1}{\mathfrak{C}_0}\right\}\|\lambda_0\|_{L^{\infty}(B_1)}}\ \right],
$$
which clearly gives a contradiction as $k \to \infty$. This concludes the proof.
\end{proof}

\begin{theorem}\label{THMAux} There exists a positive constant $\mathfrak{C}_2 =  \mathfrak{C}_2(N, p, q, \|\lambda_0\|_{L^{\infty}(\Omega)})$ such that for all $u \in \mathfrak{J}_p(B_1)$ it holds
$$
  u(x) \leq \mathfrak{C}_2|x|^{\frac{p}{p-1-q}}.
$$
\end{theorem}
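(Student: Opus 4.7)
The plan is to establish the pointwise estimate via a dyadic iteration, treating the "doubling" scales (those in $\mathfrak{D}[u]$) and the "non-doubling" scales separately and combining them through an induction on the dyadic scale.

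First I would observe the complementary estimate to Lemma \ref{LemmaIter}: if $j \notin \mathfrak{D}[u]$, then by the very definition of $\mathfrak{D}[u]$ in \eqref{EqDoubProp} one has
\[
\mathcal{S}_{\frac{1}{2^{j+1}}}[u] \;<\; \frac{1}{2^{\frac{p}{p-1-q}}\max\{1,1/\mathfrak{C}_0\}}\,\mathcal{S}_{\frac{1}{2^j}}[u] \;\leq\; \left(\frac{1}{2}\right)^{\frac{p}{p-1-q}} \mathcal{S}_{\frac{1}{2^j}}[u].
\]
So on non-doubling scales the sup-norm decays by at least the target geometric factor for free, while on doubling scales Lemma \ref{LemmaIter} supplies a decay bound in absolute terms.

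The heart of the argument is then an induction on $j \geq 0$ for the statement
\[
\mathcal{S}_{\frac{1}{2^{j}}}[u] \;\leq\; \mathfrak{C}_2 \left(\frac{1}{2^{j}}\right)^{\frac{p}{p-1-q}},
\qquad \text{where } \mathfrak{C}_2 \defeq 2^{\frac{p}{p-1-q}}\max\{1,\mathfrak{C}_1\}.
\]
The base $j=0$ is immediate from $u \leq 1$. For the inductive step, assuming the estimate at scale $j$, split into cases: if $j \in \mathfrak{D}[u]$, invoke Lemma \ref{LemmaIter} to get $\mathcal{S}_{1/2^{j+1}}[u] \leq \mathfrak{C}_1 \,2^{-jp/(p-1-q)} = \mathfrak{C}_1\cdot 2^{p/(p-1-q)}(1/2^{j+1})^{p/(p-1-q)}$, which is $\leq \mathfrak{C}_2 (1/2^{j+1})^{p/(p-1-q)}$ by our choice of $\mathfrak{C}_2$; if $j \notin \mathfrak{D}[u]$, multiply the inductive hypothesis by $(1/2)^{p/(p-1-q)}$ using the complementary estimate above.

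Once the dyadic estimate is established for every $j$, the pointwise bound follows by a routine interpolation: given $0 < |x| \leq 1$, pick the unique integer $j \geq 0$ with $1/2^{j+1} \leq |x| \leq 1/2^{j}$, so that
\[
u(x) \;\leq\; \mathcal{S}_{\frac{1}{2^{j}}}[u] \;\leq\; \mathfrak{C}_2 \left(\frac{1}{2^{j}}\right)^{\frac{p}{p-1-q}} \;\leq\; \mathfrak{C}_2 \cdot 2^{\frac{p}{p-1-q}}|x|^{\frac{p}{p-1-q}},
\]
after which one absorbs the numerical factor into the final constant. I do not expect a real obstacle here: the hard analytic input has already been packaged into Lemma \ref{LemmaIter} via the compactness/Harnack argument; the present statement is the bookkeeping step that turns a conditional decay on a special subset of scales into an unconditional pointwise growth bound. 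The one subtlety is the correct choice of the doubling threshold in \eqref{EqDoubProp}, which is exactly what makes both cases of the induction close up against the same constant $\mathfrak{C}_2$.
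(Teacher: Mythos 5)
Your proof is correct and follows essentially the same route as the paper's: an induction on dyadic scales that separates the doubling case (where Lemma \ref{LemmaIter} gives an absolute decay) from the non-doubling case (where the definition of $\mathfrak{D}[u]$ supplies the geometric decay for free), finished by a routine interpolation between consecutive dyadic radii. The only cosmetic difference is that you build the factor $\max\{1,\mathfrak{C}_1\}$ into the constant explicitly, which cleanly handles the base case, whereas the paper tacitly assumes $\mathfrak{C}_1 \geq 1$ (stated there with a small typo as $\mathfrak{C}_2 \geq 1$).
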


\begin{proof} First, we claim that
\begin{equation}\label{EqFinalEst}
  \mathcal{S}_{\frac{1}{2^j}}[u] \leq \mathfrak{C}_1\left(\frac{1}{2^{j-1}}\right)^{\frac{p}{p-1-q}} \quad \forall \,\, j \in \mathbb{N},
\end{equation}
where $\mathfrak{C}_1$ is the constant from Lemma \ref{LemmaIter}. We will prove this claim by induction.
Without loss of generality we can assume that $\mathfrak{C}_2 \geq 1$, and then \eqref{EqFinalEst} holds for $j=0$.
Suppose that \eqref{EqFinalEst} holds for some $j \in \mathbb{N}$. We will verify the $(j+1)^{\text{th}}$ step. In fact, if $ j \in \mathfrak{D}[u]$ then the result holds by Lemma \ref{LemmaIter}. On the other hand, if $j \notin \mathfrak{D}[u]$, from the inductive hypothesis we get
$$
   \mathcal{S}_{\frac{1}{2^{j+1}}}[u] \leq \left(\frac{1}{2}\right)^{\frac{p}{p-1-q}}\mathcal{S}_{\frac{1}{2^j}}[u] \leq  \left(\frac{1}{2}\right)^{\frac{p}{p-1-q}} \mathfrak{C}_1 \left(\frac{1}{2^{j-1}}\right)^{\frac{p}{p-1-q}} = \mathfrak{C}_1 \left(\frac{1}{2^{j}}\right)^{\frac{p}{p-1-q}},
$$
and then \eqref{EqFinalEst} holds for all $j \in \mathbb{N}$.
Finally,  given $r \in (0, 1)$ let $j \in \mathbb{N}$ be the greatest integer such that $\frac{1}{2^{j+1}} \leq r < \frac{1}{2^j}$. Then,
\begin{equation}\label{Cont_est}
 \mathcal{S}_{r}[u] \leq \mathcal{S}_{\frac{1}{2^{j}}}[u] \leq \mathfrak{C}_1
 \left(\frac{1}{2^{j-1}}\right)^{\frac{p}{p-1-q}}
 \leq \mathfrak{C}_2(N, p, q, \|\lambda_0\|_{L^{\infty}(\Omega)})r^{\frac{p}{p-1-q}}
\end{equation}
and the proof is finished.
\end{proof}

Next, we  present a universal normalization and scaling procedure in order to place solutions of \eqref{Eqp-Lapla} in an appropriated flatness scenery, which allows us to apply Lemma \ref{LemmaIter}. This reasoning is inspired in the  flatness devise introduced in \cite{Tei16} and \cite{Tei-16}.

\begin{remark}[{\bf Flatness scenery}]\label{FlatHip} Notice that Lemma \ref{LemmaIter} assures the existence of a universal constant $0<\tau_0 \ll 1$ (small enough) such that if $u \in \mathfrak{J}_p(B_1)$ with
$$
   \left\|\div(|\nabla u|^{p-2} \nabla u)\right\|_{L^{\infty}(B_1)}  = \left\|\lambda_0 u_{+}^q\right\|_{L^{\infty}(B_1)}\leq \tau_0,
$$
then
$$
   \mathcal{S}_{\frac{1}{2^{j+1}}}[u] \leq \mathfrak{C}_{\tau_0}(p, q, N)
   \left(\frac{1}{2^j}\right)^{\frac{p}{p-1-q}}.
$$
Therefore, we are able to select $\tau_0\ll 1$ such that
$$
  \mathfrak{C}_{\tau_0}(p, q, N) =  \frac{2}{2^{\frac{p}{p-1-q}}}  \mathfrak{C} _0.
$$
Particularly, coming back to estimate \eqref{Cont_est}, we obtain
\begin{equation}\label{Sharp_est}
 \mathcal{S}_{r}[u] \leq 2 \cdot 2^{\frac{p}{p-1-q}} \mathfrak{C}_0  r^{\frac{p}{p-1-q}}.
\end{equation}
\end{remark}

We now are ready to prove Theorem \ref{ThmGR1}.

\begin{proof}[{\bf Proof of  Theorem \ref{ThmGR1}}]\label{rem1} In order to prove  Theorem \ref{ThmGR1}, we have to reduce its hypotheses to the framework of Theorem \ref{THMAux}. We assume without loss of generality that  $\overline{B_1} \Subset \Omega$. For $x_0 \in \partial \{u>0\} \cap \overline {B_1}$ we will proceed with a normalization and intrinsic scaling argument: let us define
$$
     v(x)\defeq \frac{u\left( x_0 + \mathfrak{R}_0 x\right)}{\kappa_0} \quad \mbox{in} \quad B_1
$$
for constants $\kappa_0, \mathfrak{R}_0>0$ to be determined universally \textit{a posteriori}.

From the equation satisfied by $u$, we easily verify that $v$ fulfills
\begin{equation}\label{eqcomp}
    -\Delta_p v + \hat{\lambda}_0(x) v^q_{+}(x)= 0,
\end{equation}
in the weak sense for $\hat{\lambda}_0(x)\defeq  \frac{\mathfrak{R}_0^{p}}{\kappa_0^{p-1-q}}\lambda_0(x_0+\mathfrak{R}_0x).$
Now, let $\tau_0>0$ be the greatest universal constant pointed out in Remark \ref{FlatHip} such that Lemma  \ref{LemmaIter} holds provided that
 $$
   \|\div(|\nabla u|^{p-2} \nabla u)\|_{L^{\infty}(B_1)} \leq \tau_0.
 $$
By choosing
\begin{equation}\label{EqNScond}
  \kappa_0 \defeq \|u\|_{L^{\infty}(\Omega)} \quad \text{and} \quad 0<\mathfrak{R}_0< \min\left\{1, \frac{\dist(\overline{B_1}, \partial \Omega)}{2}, \sqrt[p]{\frac{\tau_0\kappa_0^{p-1-q}}{\|\lambda_0\|_{L^{\infty}(\Omega)}}} \right\},
\end{equation}
then $v$ fits into the framework of Theorem \ref{THMAux}. Hence, there exists $ \mathfrak{C}_2= \mathfrak{C}_2(N, p, q, \inf_{\Omega} \lambda_0(x))$ such that
$$
  v(x) \leq \mathfrak{C}_2|x|^{\frac{p}{p-1-q}}.
$$
By scaling back, we obtain the conclusion of Theorem \ref{ThmGR1}.
\end{proof}

 As a consequence of Theorem \ref{ThmGR1} we obtain a finer decay near free boundary points. Precisely, a dead core solution $u$ arrives at its null set as a suitable power of the distance up to the free boundary.

\begin{corollary}\label{DistEst} Let $u$ be a bounded weak solution to \eqref{Eqp-Lapla} and $\Omega^{\prime} \Subset \Omega$. Then, there is a constant $\mathfrak{C}^{\sharp}=\mathfrak{C}^{\sharp}\left(N, p, q, \inf_{\Omega} \lambda_0(x)\right)$ such that, for any point $x_0 \in   \{u > 0\} \cap \Omega^{\prime}$ such that $\dist(x_0, \partial \{u>0\}) \leq \frac{\mathfrak{R}_0}{2}$, there holds
\begin{equation}
 \displaystyle  u(x_0) \leq \mathfrak{C}^{\sharp}
[ \dist(x_0, \partial \{u>0\})]^{\frac{p}{p-1-q}}.
\end{equation}
\end{corollary}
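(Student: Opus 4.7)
The plan is to derive this estimate as a direct corollary of the improved regularity at free boundary points (Theorem \ref{ThmGR1}), by projecting $x_0$ orthogonally onto the free boundary and then applying the pointwise estimate there.

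First I would fix $x_0 \in \{u>0\} \cap \Omega'$ with $d := \dist(x_0, \partial\{u>0\}) \leq \mathfrak{R}_0/2$. Since $u$ is continuous, the set $\{u = 0\}$ is closed, and hence so is $\partial\{u>0\}$. Choose $\Omega''$ with $\Omega' \Subset \Omega'' \Subset \Omega$, for instance an $\mathfrak{R}_0/2$-neighborhood of $\Omega'$; the choice of $\mathfrak{R}_0$ in \eqref{EqNScond}, in particular the bound $\mathfrak{R}_0 < \tfrac{1}{2}\dist(\overline{B_1},\partial\Omega)$, ensures $\Omega'' \Subset \Omega$. By compactness there exists a projection point $y_0 \in \partial\{u>0\}$ realizing the distance, $|x_0 - y_0| = d$, and $y_0 \in \Omega''$ since $d\leq \mathfrak{R}_0/2$.

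Next I would invoke Theorem \ref{ThmGR1} at the free boundary point $y_0 \in \partial\{u>0\} \cap \Omega''$, evaluated at $x = x_0$. The admissibility requirement $0 < |x_0 - y_0| \ll \min\{1, \dist(\Omega'', \partial\Omega)\}$ is met precisely thanks to the smallness constraint $d \leq \mathfrak{R}_0/2$ together with the explicit smallness of $\mathfrak{R}_0$ fixed in the proof of Theorem \ref{ThmGR1}. The conclusion of that theorem then reads
\begin{equation*}
u(x_0) \;\leq\; \mathfrak{C}_1\,\|u\|_{L^\infty(\Omega)}\, |x_0 - y_0|^{\frac{p}{p-1-q}} \;=\; \mathfrak{C}_1\,\|u\|_{L^\infty(\Omega)}\, d^{\frac{p}{p-1-q}},
\end{equation*}
and setting $\mathfrak{C}^{\sharp} := \mathfrak{C}_1\,\|u\|_{L^\infty(\Omega)}$ delivers the advertised bound.

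There is essentially no hard step here; the only point that requires attention is the bookkeeping needed to guarantee that $y_0$ lies in a subdomain to which Theorem \ref{ThmGR1} can be applied, and this is precisely what the hypothesis $d \leq \mathfrak{R}_0/2$ is designed to provide. One should also note that the constant $\mathfrak{C}^{\sharp}$ inherits, through Theorem \ref{ThmGR1}, a linear dependence on $\|u\|_{L^\infty(\Omega)}$ in addition to its dependence on $(N,p,q,\inf_\Omega \lambda_0)$; the stated dependence list in the corollary should be read accordingly.
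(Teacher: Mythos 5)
Your proof is correct and takes essentially the same route as the paper: identify a projection point $z_0\in\partial\{u>0\}$ with $|x_0-z_0|=d$ and invoke Theorem~\ref{ThmGR1} based at $z_0$. The paper's version takes a tiny detour through $u(x_0)\le\sup_{B_d(x_0)}u\le\sup_{B_{2d}(z_0)}u$ before applying the theorem, whereas you apply the pointwise estimate of Theorem~\ref{ThmGR1} directly at $x=x_0$, which is marginally cleaner; your remark that $\mathfrak{C}^{\sharp}$ necessarily also carries the factor $\|u\|_{L^{\infty}(\Omega)}$ coming from \eqref{ImpEst} is a fair reading of the dependence that the paper's statement leaves implicit.
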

\begin{proof}
Fix $x_0 \in   \{u > 0\} \cap \Omega^{\prime}$ and denote $d\defeq  \dist(x_0, \partial \{u>0\})$. Now, select $z_0 \in \partial \{u>0\}$ a free boundary point which achieves the distance, i.e., $d = |x_0-z_0|$. From  Theorem \ref{ThmGR1} we have that
\begin{equation}\label{eq4.4}
  \displaystyle u(x_0) \leq \sup_{B_{d}(x_0)} u(x) \leq \sup_{B_{2d}(z_0)} u(x) \leq \mathfrak{C}^{\sharp}\left(N, p, q, \inf_{\Omega} \lambda_0(x)\right) d^{\frac{p}{p-1-q}}.
\end{equation}
This finishes the proof.
\end{proof}

By using an argument based on Serrin's Harnack inequality, the first and third author established in \cite[Theorem 1.1]{SS}, for a more general class of $p-$Laplacian type operators, an alternative version for the growth estimates close to the free boundary points which reads as
\begin{equation}\label{estim.77}
  \displaystyle \sup_{B_r(x_0)} u(x) \leq \mathfrak{C}_{\sharp} \max\left\{\inf_{B_r(x_0)} u(x), \,\, r^{\frac{p}{p-1-q}} \right\},
\end{equation}
for a universal constant $\mathfrak{C}_{\sharp}>0$ and all $0< r< \min\left\{1, \frac{\dist(x_0, \partial \Omega)}{2}\right\}$.

Here we have decided to adopt the iterative geometric decay, namely Lemma \ref{LemmaIter}, due to its sharp and explicit representation for the universal constants involved in its estimate.

\subsection{Applications}\label{SecApplic}

In this section we present a number of interesting applications of our previous results.
Our first application establishes a similar growth rate for the gradient of functions $u \in \mathfrak{J}_p(B_1)$.

\begin{lemma}\label{IRresult2.66} There exists a constant $\mathfrak{C}_3 =  \mathfrak{C}_3(N, p, q, \lambda_0)>0$ such that for all $u \in \mathfrak{J}_p(B_1)$ there holds
$$
  |\nabla u(x)| \leq \mathfrak{C}_3|x|^{\frac{1+q}{p-1-q}} \quad \forall \,\, x \in B_{\frac{1}{2}}.
$$
\end{lemma}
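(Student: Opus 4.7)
The plan is to use the sharp growth rate from Theorem \ref{THMAux} to place the rescaled problem in a uniformly bounded regime, and then invoke the classical $C^{1,\alpha}$ theory for the $p$-Laplacian.

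Fix $x_0 \in B_{1/2} \setminus \{0\}$ and set $r \defeq |x_0|$. Define the rescaled function
\[
    v(y) \defeq \frac{u(x_0 + ry)}{r^{p/(p-1-q)}}, \qquad y \in B_{1/2}.
\]
A direct computation (using $\alpha \defeq p/(p-1-q)$ and the identity $-\alpha(p-1-q) + p = 0$) shows that $v$ satisfies in the weak sense
\[
    -\Delta_p v(y) + \tilde{\lambda}_0(y)\, v_{+}^q(y) = 0 \quad \text{in } B_{1/2},
\]
where $\tilde{\lambda}_0(y) \defeq \lambda_0(x_0 + ry)$ is a uniformly bounded coefficient.

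Since $x_0 + ry \in B_{3r/2} \subset B_1$ for $y \in B_{1/2}$, and since $u \in \mathfrak{J}_p(B_1)$, Theorem \ref{THMAux} yields the uniform bound
\[
   0 \leq v(y) = \frac{u(x_0+ry)}{r^{\alpha}} \leq \frac{\mathfrak{C}_2\,|x_0+ry|^{\alpha}}{r^{\alpha}} \leq \mathfrak{C}_2\left(\tfrac{3}{2}\right)^{\alpha} \qquad \forall y \in B_{1/2}.
\]
Consequently the right-hand side $\tilde{\lambda}_0\, v_+^q$ is bounded in $L^{\infty}(B_{1/2})$ by a universal constant.

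Now, the classical local $C^{1,\beta}_{\mathrm{loc}}$ regularity theory for the (non-singular) $p$-Laplacian with bounded right-hand side (cf.~\cite{DB0}, \cite{Tolk}) produces a universal constant $\mathfrak{C} = \mathfrak{C}(N,p,q,\lambda_0)$ such that
\[
    |\nabla v(0)| \leq \mathfrak{C}\Big(\|v\|_{L^{\infty}(B_{1/2})} + \|\tilde{\lambda}_0 v_+^q\|_{L^{\infty}(B_{1/2})}^{1/(p-1)}\Big) \leq \mathfrak{C}_3.
\]
Scaling back, since $\nabla u(x_0) = r^{\alpha - 1}\nabla v(0)$ and $\alpha - 1 = \frac{1+q}{p-1-q}$, we conclude
\[
   |\nabla u(x_0)| = r^{\frac{1+q}{p-1-q}}|\nabla v(0)| \leq \mathfrak{C}_3\, |x_0|^{\frac{1+q}{p-1-q}}.
\]
The case $x_0 = 0$ is trivial: by the $C^{1,\beta}$ regularity $\nabla u(0)$ exists, and since $u \geq 0$ attains its minimum there, $\nabla u(0)=0$.

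The only delicate point is that the universal constant coming from the $p$-Laplacian $C^{1,\beta}$ estimate should not deteriorate under our rescaling. This is built in, since our rescaling was chosen precisely so that $v$ solves the \emph{same} equation with a uniformly $L^{\infty}$-bounded coefficient and uniformly $L^{\infty}$-bounded data; both DiBenedetto's and Tolksdorf's estimates yield a constant depending only on $N$, $p$, and these $L^{\infty}$ bounds, which are under universal control thanks to Theorem \ref{THMAux}.
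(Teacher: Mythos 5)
Your proof is correct, and it follows a genuinely different route from the paper's. The paper re-runs a dyadic iterative contradiction argument (a gradient analogue of the one used for Lemma~\ref{LemmaIter}/Theorem~\ref{THMAux}): it assumes the iterative decay for $\mathcal{S}_{1/2^{j+1}}[|\nabla u|]$ fails, builds a sequence of auxiliary functions $v_j$ normalized by the gradient supremum, uses the already-proved $L^\infty$ decay \eqref{Eqiter} to show $\|v_j\|_{L^\infty}\to 0$ while the normalization keeps $\mathcal{S}_{1/2}[|\nabla v_j|]=1$, and then invokes interior $C^{1,\alpha}$ estimates to reach a contradiction. You instead bootstrap directly: fixing $x_0\in B_{1/2}\setminus\{0\}$, a single rescaling at scale $r=|x_0|$ preserves the equation (your exponent bookkeeping $p-\alpha(p-1-q)=0$ is right), and Theorem~\ref{THMAux} forces $\|v\|_{L^\infty(B_{1/2})}$ and hence $\|\tilde\lambda_0 v_+^q\|_{L^\infty(B_{1/2})}$ to be universally bounded, so one application of the interior gradient estimate at $y=0$ (which lies in the interior ball where the estimate is valid) yields $|\nabla v(0)|\leq\mathfrak{C}_3$, and undoing the scaling with $\alpha-1=\tfrac{1+q}{p-1-q}$ gives the claim. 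The origin case is correctly dispatched since $u\ge 0$ with $u(0)=0$ forces $\nabla u(0)=0$ once $u$ is known to be $C^1$. Your route is shorter and more modular, reducing the lemma to a corollary of Theorem~\ref{THMAux} plus one standard $C^{1,\alpha}$ citation; the paper's route is more self-contained (it re-derives the geometric decay for the gradient from first principles via compactness) but requires managing yet another normalized auxiliary sequence and a contradiction step. Both arguments are valid and lean on the same two ingredients --- the $L^\infty$ growth of $u$ near $0$ and the classical $p$-Laplacian gradient estimates of DiBenedetto and Tolksdorf --- just organized differently.
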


\begin{proof} As previously, it is enough to prove the following estimate
\begin{equation}\label{eqEstGrad}
  \mathcal{S}_{\frac{1}{2^{j+1}}}[| \nabla u|] \leq \max\left\{\mathfrak{C} \left(\frac{1}{2^j}\right)^{\frac{1+q}{p-1-q}}, \left(\frac{1}{2}\right)^{\frac{1+q}{p-1-q}}\mathcal{S}_{\frac{1}{2^{j}}}[|\nabla u|]\right\},
\end{equation}
for all $j \in \mathbb{N}$ and a constant $\mathfrak{C} = \mathfrak{C}(N, p, q, \lambda_0)$.
Suppose, arguing by contradiction, that \eqref{eqEstGrad} is not true. Then, there exists $u_j \in \mathfrak{J}_p(B_1)$ such that
\begin{equation}\label{eqEstGradHip}
  \mathcal{S}_{\frac{1}{2^{j+1}}}[|\nabla u_j|] \geq \max\left\{j\left(\frac{1}{2^j}\right)^{\frac{1+q}{p-1-q}}, \left(\frac{1}{2}\right)^{\frac{1+q}{p-1-q}}\mathcal{S}_{\frac{1}{2^{j}}}[|\nabla u_j|]\right\}.
\end{equation}
Now, we define the auxiliary function
$$
   v_j(x) \defeq \frac{2^ju_j(\frac{1}{2^j})}{\mathcal{S}_{\frac{1}{2^{j+1}}}[|\nabla u_j|]} \quad \mbox{for} \quad x \in B_1.
$$
Hence, by using \eqref{Eqiter} and \eqref{eqEstGradHip} we get
$$
  0\leq  v_j(x) \leq \frac{2^j\mathfrak{C}_1(2^{-j})^{\frac{p}{p-1-q}}}{\mathcal{S}_{\frac{1}{2^{j+1}}}[|\nabla u_j|]}\leq \frac{\mathfrak{C}_1}{j} \quad \mbox{for} \quad x \in B_1.
$$
Furthermore, $\mathcal{S}_{\frac{1}{2}}[|\nabla v_j|]  = 1$ and $\mathcal{S}_{1}[|\nabla v_j|]  \leq 2^{\frac{1+q}{p-1-q}}$. We also have that
$$
     -\Delta_p v_j + \hat{\lambda^j_0}(x)(v_j)^{q}_{+}(x)  = 0\quad \mbox{in} \quad B_1
$$
in the weak sense, where $$\hat{\lambda^j_0}(x) \defeq \frac{1}{2^{j(1+q)}}\frac{1}{\mathcal{S}^{p-1-q}_{\frac{1}{2^{j+1}}}[|\nabla u_j|]}\lambda^j_0\left(\frac{1}{2^{j}}x\right).$$ Hence, we get that
$$
   \left\|\hat{\lambda^j_0}(v_j)^{q}_{+}\right\|_{L^{\infty}(B_1)} \leq \mathfrak{C}^{q}
   \sup_{B_1} \lambda_0(x) \left(\frac{1}{j}\right)^{p-1}.
$$
Finally, by invoking the uniform gradient estimates from \cite{Choe1}, \cite{DB0} and \cite{Tolk} we obtain
$$
  1= \mathcal{S}_{\frac{1}{2}}[| \nabla v_j|] \leq \mathfrak{C}(N, p)\left[\|v_j\|_{L^{\infty}(B_1)}+ \left\|\hat{\lambda^j_0}(v_j)^{q}_{+}\right\|^{\frac{1}{p-1}}_{L^{\infty}(B_1)}\right] \leq \mathfrak{C}^{\ast}\left(\frac{1}{j}+\frac{1}{j}\right) \to 0 \quad \mbox{as} \quad j \to \infty,
$$
which clearly yields a contradiction and the lemma is proved.
\end{proof}

\begin{remark} Similarly to Remark \ref{FlatHip}, we obtain for any $x_0 \in \partial \{u>0\} \cap \Omega^{\prime}$ and $r\ll 1$ the following
\begin{equation}\label{EquEstGrad}
    \displaystyle \sup_{B_r(x_0)} |\nabla u(x)| \leq \mathfrak{C}(N, p, q, \lambda_0) r^{\frac{1+q}{p-1-q}},
\end{equation}
where in this case we are able to choose $\mathfrak{C}(N, p, q, \lambda_0) = 2^{\frac{1+q}{p-1-q}}\mathfrak{C}_0$.
\end{remark}

We stress that with a similar strategy one can estimate the $L^2-$norm of $|\nabla u(x)|^{p-2}|D^2 u(x)|$, which may not exist  point-wisely.

\begin{lemma}[{\bf Estimate in $L^2-$average}]\label{LemmL2Est}
 For every $u \in \mathfrak{J}_p(B_1)$ with $p>2$ and $x_0 \in \partial \{u>0\} \cap B_{\frac{1}{2}}$ there exists $M = M(N, p, q, \lambda_0)$ such that
$$
   \displaystyle \left(\intav{B_r(x_0)} (|\nabla u(x)|^{p-2}|D^2 u(x)|)^2 dx\right)^{\frac{1}{2}} \leq M
   r^{\frac{pq}{p-1-q}}.
$$
\end{lemma}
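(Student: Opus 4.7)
My plan is to combine the intrinsic rescaling used in the proof of Theorem \ref{ThmGR1} with the classical second-order Sobolev regularity theory for the $p$-Laplacian, and then scale back. Fix $x_0 \in \partial\{u>0\}\cap B_{1/2}$ and $0<r\ll 1$, and introduce the scaled profile
$$
   v(y) \defeq \frac{u(x_0+ry)}{r^{p/(p-1-q)}}, \qquad y \in B_1.
$$
Exactly as in the proof of Theorem \ref{ThmGR1}, $v$ is a weak solution of $-\Delta_p v + \lambda_0(x_0+ry)\, v_+^q = 0$ in $B_1$, with $v(0)=0$. A direct computation of the scaling of the derivatives yields
$$
   |\nabla v(y)|^{p-2}|D^2 v(y)| \;=\; r^{-pq/(p-1-q)}\; |\nabla u(x_0+ry)|^{p-2}|D^2 u(x_0+ry)|,
$$
so that, by the invariance of the integral average under this change of variables, any universal bound of the form $\bigl(\intav_{B_1}(|\nabla v|^{p-2}|D^2 v|)^2\bigr)^{1/2} \leq M$ translates into the estimate claimed in the lemma.

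To establish this universal bound I would combine the quantitative results proved so far. Theorem \ref{ThmGR1} applied to $u$ at $x_0$ delivers $\|v\|_{L^\infty(B_1)} \leq C$, while the gradient decay from Lemma \ref{IRresult2.66} (translated to the free boundary point $x_0$ via the same compactness argument) gives $\|\nabla v\|_{L^\infty(B_{3/4})} \leq C$, both with universal constants $C=C(N,p,q,\lambda_0)$. In particular, $g(y)\defeq \lambda_0(x_0+ry)v_+^q(y)$ is uniformly bounded. I would then invoke the classical second-order Sobolev estimate for degenerate $p$-Laplace equations (as in the works of DiBenedetto, Tolksdorf, and Cianchi--Maz'ya): for $p\geq 2$, every weak solution of $-\Delta_p v = g$ with bounded right-hand side satisfies
$$
   \int_{B_{1/2}} |\nabla v|^{p-2} |D^2 v|^2 \, dy \;\leq\; C(N,p)\bigl(\|\nabla v\|_{L^p(B_{3/4})}^p + \|g\|_{L^2(B_{3/4})}^2\bigr).
$$
Since pointwise one has $(|\nabla v|^{p-2}|D^2 v|)^2 \leq \|\nabla v\|_{L^\infty(B_{3/4})}^{p-2} \cdot |\nabla v|^{p-2}|D^2 v|^2$, combining this inequality with the uniform gradient bound upgrades the previous estimate to a universal $L^2$-bound for $|\nabla v|^{p-2}|D^2 v|$. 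Scaling back through the identity displayed above produces the factor $r^{pq/(p-1-q)}$ with a universal constant $M=M(N,p,q,\lambda_0)$.

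The step I expect to be the main obstacle is the uniform applicability of the second-order Sobolev estimate in the presence of the absorption term $\lambda_0 v_+^q$ in the degenerate regime $p>2$, since the classical statements are usually formulated for pure $-\Delta_p v = g$. Should a direct reference not apply cleanly, I would fall back on the contradiction/compactness strategy already used in Lemma \ref{LemmaIter} and Lemma \ref{IRresult2.66}: assuming the inequality fails along a normalized sequence $v_k$, use the $C^{1,\alpha}$-estimates from \cite{Choe1}, \cite{DB0}, \cite{Tolk} to extract a strongly convergent subsequence, and derive a contradiction with the second-order regularity of the limiting $p$-harmonic profile.
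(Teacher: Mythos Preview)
Your primary (direct) route is correct and is genuinely different from the paper's argument. The paper proves the lemma by the very contradiction/compactness scheme you list as your fallback: it defines the dyadic functional $\hat{\mathcal{S}}_r[u]=\bigl(\intav_{B_1}(|\nabla u(rx)|^{p-2}|D^2u(rx)|)^2\,dx\bigr)^{1/(2(p-1))}$, assumes the iterative decay fails along a sequence, rescales so that $\|v_k\|_{L^\infty}\le \mathfrak{C}/k$ while $\hat{\mathcal{S}}_{1/2}[v_k]=1$, and then combines the gradient bound $\|\nabla v_k\|_{L^\infty(B_{1/2})}\le C/k$ with an $L^2$ bound on $D^2v_k$ from \cite{Tolk} to force a contradiction for large $k$.

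Your direct argument is more economical: once you know the growth estimates from Theorem~\ref{THMAux}/Theorem~\ref{ThmGR1} and Lemma~\ref{IRresult2.66} (shifted to $x_0$), the rescaled profile $v$ has uniformly bounded height, gradient and right-hand side, and the standard weighted second-order estimate $\int_{B_{1/2}}|\nabla v|^{p-2}|D^2v|^2\le C$ immediately yields, via your pointwise inequality, the universal $L^2$ bound on $|\nabla v|^{p-2}|D^2v|$. Scaling back gives the lemma without any iteration or contradiction. The only cosmetic adjustment is to define $v$ on $B_2$ (using $r\le 1/4$) so that the interior estimate covers the full average over $B_r(x_0)$. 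Note also that your weighted estimate $|\nabla v|^{(p-2)/2}\nabla v\in W^{1,2}_{\mathrm{loc}}$ is actually the more robust ingredient here: the paper's proof invokes a uniform unweighted bound $\intav_{B_1}|D^2v_k|^2\le C$ from \cite{Tolk}, which for large $p$ is a stronger and less standard claim than the weighted version you use.
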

\begin{proof}
Note that by translation and scaling we might assume that $x_0 = 0$. Now, let us define
$$
  \hat{\mathcal{S}}_r[u] \defeq \displaystyle \left(\intav{B_1} (|\nabla u(rx)|^{p-2}|D^2 u(rx)|)^2 dx\right)^{\frac{1}{2(p-1)}}.
$$
As before, it suffices to prove that there exists a universal constant $\mathfrak{C}_{\ast}$ such that
\begin{equation}\label{EqEstL2}
  \hat{\mathcal{S}}_{\frac{1}{2^{k+1}}}[u] \leq \max\left\{\mathfrak{C}_{\ast}\left(\frac{1}{2^k}\right)^{\frac{pq}{(p-1)(p-1-q)}}, \left(\frac{1}{2}\right)^{\frac{pq}{(p-1)(p-1-q)}}\hat{\mathcal{S}}_{\frac{1}{2^{k}}}[u]\right\} \quad \forall \,\, k \in \mathbb{N} \ \text{and} \ 0<r\leq \frac{1}{2}.
\end{equation}
As before, we proceed by contradiction and  assume that there exist $u_k \in \mathfrak{J}_p(B_1)$ and $j_k \in \mathbb{N}$ such that \eqref{EqEstL2} does not hold. Therefore,
$$
\begin{array}{rcl}
  \displaystyle \hat{\mathcal{S}}_{\frac{1}{2^{j_k+1}}}[u_k]  & \geq &  \displaystyle \max\left\{k\left(\frac{1}{2^{j_k}}\right)^{\frac{pq}{(p-1)(p-1-q)}}, \left(\frac{1}{2}\right)^{\frac{pq}{(p-1)(p-1-q)}}\hat{\mathcal{S}}_{\frac{1}{2^{j_k}}}[u_k]\right\} \\[10pt]
   & \geq & \displaystyle \max\left\{k\left(\frac{1}{2^{j_k}}\right)^{\frac{p}{p-1-q}}, \left(\frac{1}{2}\right)^{\frac{pq}{(p-1)(p-1-q)}}\hat{\mathcal{S}}_{\frac{1}{2^{j_k}}}[u_k]\right\}.
\end{array}
$$
Now, let us define the scaled and ``normalized'' function $v_k: B_1 \to \R$ by
$$
  v_k(x) \defeq \frac{u_k(\frac{1}{2^{j_k}}x)}{\hat{\mathcal{S}}_{\frac{1}{2^{j_k+1}}}[u_k]}.
$$
Thus, it is easy to verify that
\begin{itemize}
  \item[\checkmark] $\|v_k\|_{L^{\infty}(B_1)} \leq \frac{\mathfrak{C}}{k}$;
  \item[\checkmark] $\hat{\mathcal{S}}_{\frac{1}{2}}[v_k] = 1$;
  \item[\checkmark] $\displaystyle \hat{\mathcal{S}}_{1}[v_k] \leq 2^{\frac{pq}{(p-1)(p-1-q)}}$;
  \item[\checkmark] $\displaystyle -\Delta_p v_k(x) + \frac{1}{2^{j_k.p}}\frac{1}{\hat{\mathcal{S}}^{p-1-q}_{\frac{1}{2^{j_k+1}}}[u_k]}\lambda_0\left(\frac{1}{2^{j_k}}x\right)v_k^q(x) = 0$ in the weak sense in $B_1$.
\end{itemize}
Moreover, we have that
$$
   \displaystyle \left\|\lambda_0(v_k)^{q}_{+}\right\|_{L^{\infty}(B_1)} \leq \mathfrak{C}^{q}
   \sup_{B_1} \lambda_0(x) \left(\frac{1}{k}\right)^{p-1}.
$$
On the other hand, by invoking the uniform gradient estimates from \cite{Choe1}, \cite{DB0} and \cite{Tolk} and the $L^2$-bound of the second derivatives from \cite{Tolk} we obtain
$$
  \mathcal{S}_{\frac{1}{2}}[|\nabla v_k|] \leq \mathfrak{C}(N, p)\left[\|v_k\|_{L^{\infty}(B_1)}+ \left\|\lambda_0(v_k)^{q}_{+}\right\|^{\frac{1}{p-1}}_{L^{\infty}(B_1)}\right] \leq \frac{\mathfrak{C}^{\ast}}{k}
\; \text{  and  } \;
  \left(\intav{B_1} |D^2 v_k(x)|^2 dx\right)^{\frac{1}{2}} < C< \infty.
$$
Finally,
$1=\hat{\mathcal{S}}_{\frac{1}{2}}[v_k]<C^{\frac{1}{p-1}}k^{-\frac{p-2}{p-1}}$, which yields a contradiction for $k $ large when  $p>2$.
\end{proof}

The following result gives a non-degeneracy estimate for the Hessian at free boundary points in the $L^2-$average sense.

\begin{lemma}[{\bf Non-degeneracy for the Hessian in $L^2-$average}]
Let $u$ be a bounded weak solution to \eqref{Eqp-Lapla}, $q\geq \max\left\{\frac{1}{2},\frac{p-2}{2}\right\}$ and $\Omega^{\prime} \Subset \Omega$. If $u$ is strong non-degenerate in $L^2-$average, i.e.,
$$
\intav{B_r(x_0)} u(x)dx \geq C(N, p, q)r^{\frac{p}{p-1-q}}
$$
for any point $x_0 \in \partial \{u>0\} \cap \Omega^{\prime}$, then, there holds that
$$
   \displaystyle \left(\intav{B_r(x_0)} |D^2 u(x)|^2 dx\right)^{\frac{1}{2}} \geq C(N, p, q, \lambda_0)r^{\frac{2(q+1)-p}{p-1-q}}.
$$
\end{lemma}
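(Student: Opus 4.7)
The plan is to exploit the PDE pointwise, transferring the hypothesis' lower bound on $\intav{B_r(x_0)} u$ to a lower bound on $\intav{B_r(x_0)} |D^2 u|^2$ via the intermediate quantity $|\nabla u|^{p-2}|D^2 u|$, with Cauchy--Schwarz and the gradient control of Lemma \ref{IRresult2.66} doing the rest. First I would use the identity $\Delta_p u = |\nabla u|^{p-2}\Delta u + (p-2)|\nabla u|^{p-4}\Delta_\infty u$ (valid classically on $\{u>0\}\setminus\mathcal{C}_\Omega[u]$, where $u$ is $C^{2,\gamma}_{\text{loc}}$ by standard elliptic regularity) together with $|\Delta_p u|\leq (N+p-2)|\nabla u|^{p-2}|D^2 u|$ to deduce from the equation that $\lambda_0(x)u^q(x) \leq (N+p-2)|\nabla u(x)|^{p-2}|D^2 u(x)|$ almost everywhere on $B_r(x_0)$; in the interior of the dead core both sides vanish since $\mathcal{C}_\Omega[u]\subset\overline{\{u=0\}}$, while $\partial\{u>0\}$ has zero Lebesgue measure. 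Integrating over $B_r(x_0)$, applying Cauchy--Schwarz, and invoking the sharp gradient estimate \eqref{EquEstGrad} to bound $(\intav{B_r(x_0)} |\nabla u|^{2(p-2)}\,dx)^{1/2}\leq \mathfrak{C}^{p-2} r^{(p-2)(1+q)/(p-1-q)}$, I arrive at
$$\intav{B_r(x_0)} \lambda_0\, u^q\,dx \;\leq\; \mathfrak{C}'\, r^{(p-2)(1+q)/(p-1-q)}\left(\intav{B_r(x_0)} |D^2 u|^2\,dx\right)^{1/2}.$$

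The remaining ingredient is a matching lower bound for $\intav{B_r(x_0)} u^q$. Here I would combine the non-degeneracy hypothesis $\intav{B_r(x_0)} u\geq C\, r^{p/(p-1-q)}$ with the sharp upper bound $u\leq M\, r^{p/(p-1-q)}$ on $B_r(x_0)$ from Theorem \ref{ThmGR1}: writing $u = u^q\cdot u^{1-q}$ and using $u^{1-q}\leq M^{1-q}\, r^{(1-q)p/(p-1-q)}$ pointwise yields $u^q \geq M^{-(1-q)}\, r^{-(1-q)p/(p-1-q)}\, u$ (valid for $0<q\leq 1$; the case $q\geq 1$ follows directly from Jensen's inequality), and integration gives $\intav{B_r(x_0)} u^q\,dx \geq \tilde c\, r^{pq/(p-1-q)}$. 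Using $\lambda_0\geq \inf_\Omega \lambda_0 > 0$ and chaining this with the previous display, one rearranges to obtain
$$\left(\intav{B_r(x_0)} |D^2 u|^2\,dx\right)^{1/2} \;\geq\; c(N,p,q,\lambda_0)\, r^{(pq - (p-2)(1+q))/(p-1-q)},$$
and the algebraic identity $pq - (p-2)(1+q) = 2(q+1) - p$ closes the argument.

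The principal technical delicacy is the pointwise use of $|D^2 u|$ near the free boundary. The hypothesis $q\geq (p-2)/2$ is exactly the threshold $p/(p-1-q)\geq 2$, where Theorems \ref{ThmGR1} and \ref{LGR} force $u$ to detach from its null set at a locally $C^{1,1}$ rate across $\partial\{u>0\}$, so that $D^2 u \in L^\infty_{\text{loc}}$ and the resulting exponent $(2(q+1)-p)/(p-1-q)\geq 0$ makes the conclusion meaningful as $r\to 0^+$. The complementary hypothesis $q\geq 1/2$ does not enter the sketch above in an essential way; it appears to provide integrability slack and to align with the regime in which the assumed non-degeneracy in $L^2$-average can in fact be established.
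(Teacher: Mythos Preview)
Your argument is correct and follows the same skeleton as the paper's proof: use the non-divergence expression for $\Delta_p u$ to bound $\lambda_0 u^q$ pointwise by a constant multiple of $|\nabla u|^{p-2}|D^2 u|$, integrate over $B_r(x_0)$, peel off $|\nabla u|^{p-2}$ via the sharp gradient estimate of Lemma~\ref{IRresult2.66}, and feed in the non-degeneracy hypothesis on the left. The only substantive difference is in the order of operations and, consequently, in how the lower bound is obtained. The paper first \emph{squares} the pointwise inequality and integrates, so that the left side becomes $\intav (\lambda_0 u^q)^2$; the lower bound then comes from Jensen applied to the convex map $t\mapsto t^{2q}$, which is exactly where the hypothesis $q\geq \tfrac12$ is used. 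You instead integrate without squaring and apply Cauchy--Schwarz, so you only need a lower bound on $\intav u^q$; your trick of writing $u=u^q\cdot u^{1-q}$ and invoking the pointwise upper bound from Theorem~\ref{ThmGR1} (for $q\leq 1$) handles this without any convexity assumption, which is why $q\geq\tfrac12$ drops out of your sketch. Both routes yield the same exponent after the identity $pq-(p-2)(1+q)=2(q+1)-p$. Your remark that $q\geq\tfrac{p-2}{2}$ is the threshold $\tfrac{p}{p-1-q}\geq 2$ ensuring $D^2 u\in L^2_{\mathrm{loc}}$ across the free boundary is also the correct reading of that hypothesis.
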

\begin{proof} Recall that the $p-$Laplace operator can be decomposed in the non-divergence form
$$
  \Delta_p u(x) = |\nabla u(x)|^{p-2}\Delta u(x) + (p-2)|\nabla u(x)|^{p-4}\Delta_{\infty} u(x).
$$
Moreover, it is easy to check that
\begin{align*}
  \displaystyle \left(\Delta_\infty u(x)\right)^2  =   \displaystyle \left(\sum_{i, j=1}^{N}  u_j(x) u_{ij}(x) u_i(x)\right)^2
  &\leq    \displaystyle \left(\sum_{i, j=1}^{N} u^2_{ij}(x) \right)\left(\sum_{i, j=1}^{N}  u^2_j(x) u^2_i(x)\right) \\
   & =  |D^2 u(x)|^2|\nabla u(x)|^4.
\end{align*}
Hence,
\begin{equation}\label{Eq5.4}
\begin{array}{l}
  \displaystyle \intav{B_r(x_0)} (\lambda_0(x)u_{+}^q(x))^2dx  =  \displaystyle \intav{B_r(x_0)} (\Delta_p u(x))^2 dx \\
   \qquad =   \displaystyle \intav{B_r(x_0)} \left[|\nabla u(x)|^{p-2}\Delta u(x) + (p-2)|\nabla u(x)|^{p-4}\Delta_{\infty} u(x)\right]^2 dx\\
   \qquad \leq  \displaystyle \intav{B_r(x_0)} \left[|\nabla u(x)|^{p-2}|\Delta u(x)| + (p-2)|\nabla u(x)|^{p-4}|\Delta_{\infty} u(x)|\right]^2 dx\\
   \qquad \leq  \displaystyle (p-1)^2 \intav{B_r(x_0)} \left[|\nabla u(x)|^{p-2}|D^2 u(x)|\right]^2 dx.
\end{array}
\end{equation}
According to the $L^2-$bounds in \cite{Tolk}, the last integral is finite. Now, using the Strong Non-degeneracy in $L^2-$average and Jensen's inequality we obtain
\begin{equation}\label{Eq5.5}
    \displaystyle \intav{B_r(x_0)} (\lambda_0(x)u_{+}^q(x))^2dx  \geq  \left(\inf_{\Omega} \lambda_0(y) C(N, p, q) r^{\frac{pq}{p-1-q}}\right)^2.
\end{equation}
On the other hand, from Lemma \ref{IRresult2.66}
\begin{equation}\label{Eq5.6}
    \displaystyle |\nabla u(x)| \leq \sup_{B_r(x_0)} |\nabla u(x)| \leq \mathfrak{C}_3(N, p, q, \lambda_0)
    r^{\frac{1+q}{p-1-q}}.
\end{equation}
Putting \eqref{Eq5.5} and \eqref{Eq5.6} into \eqref{Eq5.4} we conclude that
$$
\intav{B_r(x_0)} |D^2 u(x)|^2 dx \geq \left(\frac{\inf\limits_{\Omega} \lambda_0(y) C(N, p, q)}{(p-1)\mathfrak{C}(N, p, q, \lambda_0)^{p-2}}\right)^2 r^{\frac{2[2(1+q)-p]}{p-1-q}},
$$
which finishes the proof of the Lemma.
\end{proof}

We end this section with an application that can be used for the numeric approximation of the free boundary, a stability result. To this end, decompose $\Omega$ into finite elements
and let $h \in \R_{+}$ be a discretization parameter that converges to zero, for example, $h$ might refer to the mesh size. Now let $u_{h}$ be the corresponding discrete solution to \eqref{Eqp-Lapla}. In contrast with other free boundary problems, it is possible that $\partial \{u_{h}>0\}\cap \Omega = \emptyset$. For this reason, it is convenient to define the discrete free boundary by
$$
  \mathfrak{F}^h_{\Omega}[u_h] \defeq \partial \{u_h>\delta_h\}\cap \Omega,
$$
where $\delta_h>0$ is a parameter to be determined \textit{a posteriori}. Finally, let us assume an $L^s-$error estimate for the solutions: we assume that there exists a continuous modulus of continuity $\mu: [0, \infty) \to [0, \infty)$ with $\mu(0)=0$ such that, for some $s \in [1, \infty]$ there holds
\begin{equation}\label{EqErrAppr}
  \|u-u_h\|_{L^s(\Omega)} <\mu(h).
\end{equation}
Under these assumptions we establish the following \textit{error estimate in measure} for the free boundary.
\begin{lemma} Under the same assumptions of Theorem \ref{ThmNDinM}, take $\delta_h \defeq \mu(h)^{\frac{s(p-1-q)}{p-1-q+sp}}$, then, there exists a constant $\mathfrak{B}$ such that
$$
   \Leb\left(\mathfrak{F}_{\Omega}[u] \bigtriangleup \mathfrak{F}^h_{\Omega}[u_h]\right) \leq \mathfrak{B}\, \delta_h^{\frac{p-1-q}{p}}.
$$
Additionally, if the error estimate \eqref{EqErrAppr} holds for $s=\infty$, and there exists  $\mathfrak{C}(p, q, N, \Omega^{\prime})>0$ such that
$$
  \Omega^{\prime} \cap \left\{0<u<\varepsilon^{\frac{p}{p-1-q}}\right\} \subset \mathcal{N}_{\varepsilon}\left(\mathfrak{F}_{\Omega}[u]\cap \Omega^{\prime}\right),
$$
where $\mathcal{N}_{\varepsilon}(\mathfrak{S}) \defeq \{x \in \Omega: \dist(x, \mathfrak{S}) \leq \mathfrak{C}(p, q, N, \Omega^{\prime}) \varepsilon\}$,
then
$$
   \mathfrak{F}^h_{\Omega}[u_h]\cap \Omega^{\prime} \subset \mathcal{N}_{(2\mu(h))^{\frac{p-1-q}{p}}}(\mathfrak{F}_{\Omega}[u]\cap \Omega^{\prime}).
$$
\end{lemma}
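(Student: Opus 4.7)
The plan is to derive both conclusions from the same two ingredients: the measure non-degeneracy of Theorem~\ref{ThmNDinM}, which controls $\Leb\!\left(\Omega'\cap\{0<u<\rho^{p/(p-1-q)}\}\right)$ linearly in $\rho$, and the $L^s$ closeness \eqref{EqErrAppr}, invoked through Chebyshev's inequality.

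\textbf{Measure estimate.} Since the topological free boundaries themselves have $\Leb$-measure zero under continuity, I will interpret $\mathfrak{F}_\Omega[u]\bigtriangleup \mathfrak{F}^h_\Omega[u_h]$ at the level of the positivity sets $\{u>0\}$ and $\{u_h>\delta_h\}$. A direct case analysis (separating whether $u(x)$ itself is already smaller than $2\delta_h$ or whether the discrepancy $|u-u_h|$ exceeds $\delta_h$) produces the inclusion
\[
   \{u>0\}\,\bigtriangleup\,\{u_h>\delta_h\} \;\subset\; \{0<u<2\delta_h\}\,\cup\,\{|u-u_h|>\delta_h\}.
\]
I will bound the first set by applying Theorem~\ref{ThmNDinM} with $\rho=(2\delta_h)^{(p-1-q)/p}$, yielding a measure of order $\delta_h^{(p-1-q)/p}$, and the second by Chebyshev, producing $\delta_h^{-s}\mu(h)^s$. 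The prescribed choice of $\delta_h$ is precisely the exponent balancing these two contributions, so that their sum is dominated by $\mathfrak{B}\,\delta_h^{(p-1-q)/p}$ for a universal $\mathfrak{B}$.

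\textbf{Hausdorff estimate.} Now with $s=\infty$ the error bound is uniform, $\|u-u_h\|_{L^\infty}\le \mu(h)$. Given $x\in \mathfrak{F}^h_\Omega[u_h]\cap \Omega'$, continuity of $u_h$ forces $u_h(x)=\delta_h$, hence
\[
  u(x)\in[\max\{0,\delta_h-\mu(h)\},\;\delta_h+\mu(h)]\subset[0,\,2\mu(h)],
\]
for the effective value of $\delta_h$ at the $s=\infty$ endpoint. Consequently $u(x)\le 2\mu(h)=\varepsilon^{p/(p-1-q)}$ with $\varepsilon\defeq(2\mu(h))^{(p-1-q)/p}$. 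If $u(x)>0$, the geometric hypothesis places $x$ inside $\mathcal{N}_\varepsilon(\mathfrak{F}_\Omega[u]\cap \Omega')$ directly. If instead $u(x)=0$, I will argue that $x$ is a limit point of $\{u_h>\delta_h\}$ (since $x\in \partial\{u_h>\delta_h\}$), and by the uniform $L^\infty$ bound every such nearby point lies in $\{u>0\}$; combined with $u(x)=0$, this forces $x\in\overline{\{u>0\}}\cap\{u=0\}=\mathfrak{F}_\Omega[u]$, and the distance bound is trivial.

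\textbf{Main obstacle.} The crux of the argument is twofold: first, setting up the inclusion of symmetric differences so that the cutoffs $2\delta_h$ and $\delta_h$ line up exactly with the powers appearing in Theorem~\ref{ThmNDinM} and in Chebyshev; second, checking the algebraic identity that the prescribed exponent of $\mu(h)$ in $\delta_h$ really does equate $\delta_h^{-s}\mu(h)^s$ and $\delta_h^{(p-1-q)/p}$. Once this bookkeeping is in place, the remaining steps (the case split, the use of continuity of $u_h$, and the trivial handling of $u(x)=0$) are essentially automatic.
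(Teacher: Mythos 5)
The paper does not give a self‑contained proof here; it simply verifies the hypotheses of Nochetto's Theorems~2.1 and 2.2 from \cite{Noch} via Theorems~\ref{ThmGR1}, \ref{ThmNDinM} and \eqref{EqErrAppr}, and then quotes those theorems. Your proposal is a genuinely different route: you reconstruct the Nochetto argument from scratch, and for the measure estimate this works out. The inclusion
$\{u>0\}\bigtriangleup\{u_h>\delta_h\}\subset\{0<u<2\delta_h\}\cup\{|u-u_h|\geq\delta_h\}$
is correct (one should use $\geq$ rather than $>$, but that is harmless for Chebyshev), Theorem~\ref{ThmNDinM} with $\rho=(2\delta_h)^{(p-1-q)/p}$ handles the first set, Chebyshev handles the second, and one checks easily that with the lemma's choice of $\delta_h$ the Chebyshev term $\delta_h^{-s}\mu(h)^s$ is dominated by $\delta_h^{(p-1-q)/p}$: writing $a=p-1-q$, the required inequality reduces to $(p-a)(sp+a)\ge 0$, which holds because $p-a=1+q>0$.

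The Hausdorff estimate, however, contains a genuine gap. You assert that
$[\max\{0,\delta_h-\mu(h)\},\,\delta_h+\mu(h)]\subset[0,2\mu(h)]$
``for the effective value of $\delta_h$ at the $s=\infty$ endpoint,'' which requires $\delta_h\le\mu(h)$. But substituting $s=\infty$ into the lemma's prescribed formula yields
$$
\delta_h=\lim_{s\to\infty}\mu(h)^{\frac{s(p-1-q)}{p-1-q+sp}}=\mu(h)^{\frac{p-1-q}{p}},
$$
and since $\frac{p-1-q}{p}<1$ and $\mu(h)<1$ for small $h$, this gives $\delta_h>\mu(h)$, so the claimed inclusion fails: the best you get is $u(x)\le\delta_h+\mu(h)\le 2\mu(h)^{(p-1-q)/p}$, which is strictly larger than $2\mu(h)$. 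Feeding this into the geometric hypothesis produces a neighbourhood of radius comparable to $\mu(h)^{(p-1-q)^2/p^2}$, which is a \emph{weaker} (larger) neighbourhood than the claimed $\mathcal{N}_{(2\mu(h))^{(p-1-q)/p}}$. To make the Hausdorff inclusion come out as stated one effectively needs $\delta_h\le\mu(h)$ (e.g.\ the ``balancing'' choice $\delta_h=\mu(h)^{sp/(sp+p-1-q)}$, which tends to $\mu(h)$ as $s\to\infty$); this is inconsistent with the formula you — and the lemma — are using. You should either flag the discrepancy explicitly and work with $\delta_h=\mu(h)$ in the $s=\infty$ case, or explain why the formula's $\delta_h$ can nonetheless be replaced by a smaller threshold without affecting the discrete free boundary. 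As written, the key inclusion is simply false, and the phrase ``effective value'' hides rather than resolves the problem.
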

\begin{proof}
The proof holds directly as consequence of \cite[Theorem 2.1 and Theorem 2.2]{Noch} once that the hypotheses of such theorems are checked from Theorems \ref{ThmGR1}, \ref{ThmNDinM} and assumption \eqref{EqErrAppr}.
\end{proof}

\section{The limit problem}

This section is devoted to prove Theorems \ref{MThmLim1}, \ref{MThmLim3} and \ref{MThmLim2} concerning the limit as $p\to \infty$. First, we will prove the existence of a uniform limit for Theorem \ref{MThmLim1} as $p\to\infty$. Notice that since the boundary datum $g$ is assumed to be Lipschitz continuous we can extend it to a Lipschitz function
(that we will still call $g$) to the whole $\Omega$.

\begin{lemma}\label{Lemma2.4} Assume $\max\{2, N\}<p < \infty$ and let $u \in W^{1, p}(\Omega)$ be a weak solution to \eqref{Eqp-Lapla}. Then,
$$
\|\nabla u\|_{L^p(\Omega)} \leq C_1.
$$
Additionally, $u \in C^{0, \alpha}(\Omega)$, where $\alpha = 1- \frac{N}{p}$ with the following estimate
  $$
  \frac{|u(x)-u(y)|}{|x-y|^{\alpha}} \leq C_2.
  $$
Here $C_1, C_2>0$ are constants depending on $N$, $p$, $q$, $ \|\lambda_0\|_{L^{\infty}(\Omega)}$, $\|u\|_{L^{p}(\Omega)}$, $\|g\|_{L^{p}(\Omega)}$, $\|\nabla g\|_{L^{p}(\Omega)}$.
\end{lemma}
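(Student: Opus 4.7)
The plan is to derive the gradient bound from the weak formulation by testing against $u - g$, where $g$ denotes the Lipschitz extension of the boundary datum to $\overline{\Omega}$. Since $u - g \in W^{1,p}_0(\Omega)$, it is an admissible test function in the definition of weak solution, and one obtains the identity
\begin{equation*}
\int_{\Omega} |\nabla u|^p \, dx \;=\; \int_{\Omega} |\nabla u|^{p-2} \nabla u \cdot \nabla g \, dx \;-\; \int_{\Omega} \lambda_0(x)\, u_+^q \,(u - g)\, dx.
\end{equation*}
The strategy is then to estimate the two right-hand terms separately, using Young's inequality on the first and Hölder's inequality on the second.

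First I would bound the gradient cross term by applying Young's inequality with the conjugate pair $(p/(p-1),p)$ and a small parameter, which yields
\begin{equation*}
\left|\int_{\Omega} |\nabla u|^{p-2}\nabla u \cdot \nabla g\, dx\right| \;\leq\; \tfrac{1}{2}\int_{\Omega} |\nabla u|^p\, dx \;+\; C(p)\int_{\Omega}|\nabla g|^p\, dx,
\end{equation*}
so that half of $\int_{\Omega}|\nabla u|^p\,dx$ can be absorbed into the left-hand side. For the absorption term I would use the trivial bound $|u-g|\leq u+g$ (legitimate since both are non-negative) together with Hölder's inequality; since $q+1<p$, this gives
\begin{equation*}
\int_{\Omega} \lambda_0 \,u_+^{q}\,(u+g)\, dx \;\leq\; \|\lambda_0\|_{L^\infty(\Omega)}\,|\Omega|^{\frac{p-q-1}{p}}\!\left(\|u\|_{L^p(\Omega)}^{q+1} + \|u\|_{L^p(\Omega)}^{q}\|g\|_{L^p(\Omega)}\right).
\end{equation*}

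Combining these two estimates produces the desired bound $\|\nabla u\|_{L^p(\Omega)} \leq C_1$, with $C_1$ depending only on the parameters listed in the statement. The Hölder estimate then follows immediately from Theorem \ref{MorIneq}: since $p>N$, Morrey's embedding gives $u \in C^{0,1-N/p}(\Omega)$ with seminorm controlled by $\mathfrak{C}(N,p)\,\|\nabla u\|_{L^p(\Omega)}\leq \mathfrak{C}(N,p)\,C_1 =: C_2$. The argument is essentially routine; the only point requiring minor care is the choice of the Young parameter so that the absorbed coefficient is strictly less than one, together with the observation that non-negativity of both $u$ and $g$ simplifies the handling of the sign-indefinite absorption term.
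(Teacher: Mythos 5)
Your argument is correct and rests on the same basic strategy as the paper — an energy estimate for $\|\nabla u\|_{L^p(\Omega)}$ followed by Morrey's embedding — but you make a cleaner choice of test function. The paper multiplies the equation by $u$ and integrates by parts, producing a boundary integral which it then writes in terms of $\nabla g$ on $\partial\Omega$ and estimates by boundary $L^p$ norms; strictly, the flux $|\nabla u|^{p-2}\partial_\nu u$ appearing in that boundary term is not determined by the boundary trace $g$ alone, so that step is delicate as written. Your choice of test function $u-g \in W^{1,p}_0(\Omega)$ eliminates the boundary term entirely, keeps all integrals over $\Omega$ (consistent with the interior $L^p(\Omega)$ norms of $g$ and $\nabla g$ quoted in the lemma), and absorbs the cross term by Young's inequality. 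The minor price is that the absorption integral $-\int_\Omega \lambda_0 u_+^q(u-g)\,dx$ becomes sign-indefinite, which you handle correctly with $|u-g|\le u+g$ using the non-negativity of $u$ and $g$, whereas the paper's $-\int_\Omega\lambda_0 u_+^{q+1}\,dx$ is simply non-positive and could be dropped outright. Both routes conclude identically via Theorem~\ref{MorIneq}; yours is the more robust version of the same calculation.
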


\begin{proof}
By multiplying \eqref{Eqp-Lapla} by $u$ and using integration by parts and H\"{o}lder inequality we obtain
$$
\begin{array}{rcl}
  \displaystyle \int_{\Omega} |\nabla u|^p \, dx & = & \displaystyle  \int_{\partial \Omega} g|\nabla g|^{p-2}\nabla g\, dx - \int_{\Omega} \lambda_0(x)u_{+}^{q+1}(x)dx \\
   & \leq  & \displaystyle \Leb(\Omega)^{1-\frac{q+1}{p}}\|\lambda_0\|_{L^{\infty}(\Omega)}\|u\|^{q+1}_{L^{p}(\Omega)} + \|g\|_{L^p(\partial \Omega)}\|\nabla g\|^{p-1}_{L^p(\partial \Omega)}.
\end{array}
$$
Therefore,
$$
\|\nabla u\|_{L^p(\Omega)} \leq \left(\Leb(\Omega)^{1-\frac{q+1}{p}}\|\lambda_0\|_{L^{\infty}(\Omega)}\|u\|^{q+1}_{L^{p}(\Omega)} + \|g\|_{L^p(\partial \Omega)}\|\nabla g\|^{p-1}_{L^p(\partial \Omega)}\right)^{\frac{1}{p}}.
$$
Next, for $p>N$ by Morrey's estimates and the previous sentence, there exists a positive constant $\mathfrak{C}=\mathfrak{C}(N, p, \Omega)$ such that
$$
   \frac{|u(x)-u(y)|}{|x-y|^{1- \frac{N}{p}}} \leq \mathfrak{C} \|\nabla u\|_{L^p(\Omega)}.
$$
\end{proof}

The next result assures that any family of weak solutions to \eqref{Eqp-Lapla} is pre-compact and therefore the existence of a uniform limit for Theorem \ref{MThmLim1} is guaranteed.

\begin{lemma}[{\bf Existence of limit solutions}]\label{LemExistSol} Let $\{u_p\}_{p>1}$ be a sequence of weak solutions to \eqref{Eqp-Lapla}. Then, there exists a subsequence $p_j \to \infty$ and a limit function $u_{\infty}$ such that
$$
   \displaystyle \lim_{p_j \to \infty} u_{p_j}(x) = u_{\infty}(x)
$$
uniformly in $\Omega$. Moreover, $u_{\infty}$ is Lipschitz continuous with
$$
    [u_{\infty}]_{\text{Lip}(\overline{\Omega})} \leq \limsup_{p_j \to \infty} \mathfrak{C}(N, p_j, \Omega)\|\nabla u_{p_j}\|_{L^{p_j}(\Omega)} \leq \mathfrak{C}(N)\max\left\{\|u\|^{\ell}_{L^{\infty}(\Omega)}, [g]_{\text{Lip}(\partial \Omega)}\right\}.
$$
\end{lemma}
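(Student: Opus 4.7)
The plan is to combine the $p$-uniform energy bound from Lemma~\ref{Lemma2.4} with Morrey's embedding (Theorem~\ref{MorIneq}) to extract a uniformly convergent subsequence via Arzel\`a--Ascoli, and then to refine the asymptotic analysis of $\|\nabla u_p\|_{L^p(\Omega)}$ along this subsequence in order to pass to the limit inside the Morrey estimate and upgrade the modulus of continuity to Lipschitz. Two observations drive the argument: the constant $\mathfrak{C}(N,p)$ in Morrey's inequality does not deteriorate as $p\to\infty$ (recorded explicitly after Theorem~\ref{MorIneq}), and the Lipschitz regularity of $g$ guarantees $\|\nabla g\|_{L^p(\partial\Omega)}^{(p-1)/p}\to[g]_{\Lip(\partial\Omega)}$ as $p\to\infty$.

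First, the comparison principle (Lemma~\ref{comparison}) applied with the sub-solution $0$ and the constant super-solution $\|g\|_{L^\infty(\partial\Omega)}$ produces the uniform bound $0\leq u_p\leq \|g\|_{L^\infty(\partial\Omega)}$. Plugging this into the energy identity of Lemma~\ref{Lemma2.4} and taking $p$-th roots (via the elementary inequality $(a+b)^{1/p}\leq a^{1/p}+b^{1/p}$) I obtain
\[
\|\nabla u_p\|_{L^p(\Omega)} \leq \bigl[\Leb(\Omega)\|\lambda_0\|_{L^\infty(\Omega)}\bigr]^{1/p}\|u_p\|_{L^\infty(\Omega)}^{(q+1)/p}+\|g\|_{L^p(\partial\Omega)}^{1/p}\|\nabla g\|_{L^p(\partial\Omega)}^{(p-1)/p},
\]
which already via the crude estimate $\|u_p\|_{L^\infty(\Omega)}\leq\|g\|_{L^\infty(\partial\Omega)}$ yields a $p$-uniform control of $\|\nabla u_p\|_{L^p(\Omega)}$.

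For every $p>N$, Morrey's inequality then furnishes
\[
|u_p(x)-u_p(y)|\leq \mathfrak{C}(N,p)\|\nabla u_p\|_{L^p(\Omega)}\,|x-y|^{1-N/p},\qquad x,y\in\overline{\Omega},
\]
and since $\mathfrak{C}(N,p)$ converges to a finite constant $\mathfrak{C}(N)$, the family $\{u_p\}_{p\geq p_0}$ (for any fixed $p_0>N$) is uniformly bounded and equicontinuous on $\overline{\Omega}$. Arzel\`a--Ascoli thus delivers a subsequence $u_{p_j}\to u_\infty$ uniformly. I would then sharpen the asymptotic analysis: since the uniform convergence gives $\|u_{p_j}\|_{L^\infty(\Omega)}\to\|u_\infty\|_{L^\infty(\Omega)}$, the first term in the displayed inequality tends to $\|u_\infty\|_{L^\infty(\Omega)}^{\ell}$ (using $q(p)/p\to\ell$) and the second tends to $[g]_{\Lip(\partial\Omega)}$ by the Lipschitz continuity of $g$, so
\[
\limsup_{p_j\to\infty}\|\nabla u_{p_j}\|_{L^{p_j}(\Omega)}\leq \|u_\infty\|_{L^\infty(\Omega)}^\ell+[g]_{\Lip(\partial\Omega)}\leq 2\max\bigl\{\|u_\infty\|_{L^\infty(\Omega)}^\ell,\,[g]_{\Lip(\partial\Omega)}\bigr\}.
\]
Passing to the limit in the Morrey estimate along $p_j$ with $x\neq y$ fixed, and using $|x-y|^{1-N/p_j}\to|x-y|$, yields the desired Lipschitz bound for $u_\infty$, the dimensional constant $\mathfrak{C}(N)$ absorbing the factor $2$.

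The main technical obstacle is disentangling $p\to\infty$ from the $p$-dependent $q=q(p)$: the key step $\|u_p\|_{L^\infty(\Omega)}^{(q+1)/p}\to\|u_\infty\|_{L^\infty(\Omega)}^{\ell}$ rests on both the uniform limit $u_p\to u_\infty$ (coming from Arzel\`a--Ascoli) and the hypothesis $q(p)/p\to\ell$, while the clean convergence $\|\nabla g\|_{L^p(\partial\Omega)}^{(p-1)/p}\to[g]_{\Lip(\partial\Omega)}$ is precisely where the Lipschitz regularity of $g$ enters. Once these two limits are in place, everything else --- compactness, Morrey, and passage to the limit --- is routine.
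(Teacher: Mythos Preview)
Your proposal is correct and follows essentially the same route as the paper: uniform $L^\infty$ and gradient bounds (the paper's Lemma~\ref{Lemma2.4}), Morrey's embedding with its $p$-stable constant, Arzel\`a--Ascoli for compactness, and passage to the limit in the H\"older/Morrey estimate to upgrade to Lipschitz. The paper's proof is a terse three-line sketch of exactly this strategy; you have simply fleshed out the asymptotic analysis of $\|\nabla u_p\|_{L^p}$ and made explicit where the Lipschitz regularity of $g$ and the hypothesis $q(p)/p\to\ell$ enter.
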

\begin{proof} Due to our previous uniform bound for the gradient of $u_p$ and the fact that the boundary datum is fixed, we obtain that $\| u_{p}\|_{L^\infty (\Omega)} \leq \mathfrak{C}$ for all $2 \leq p< \infty$. Hence, existence of $u_{\infty}$ as an uniform limit is a direct consequence of the Lemma \ref{Lemma2.4} combined with the Arzel\`{a}-Ascoli compactness criterium. Finally, the last statement holds by passing to the limit in the H\"{o}lder's estimates from Lemma \ref{Lemma2.4}.
\end{proof}

Now, we show that any uniform limit, $u_\infty$, is a viscosity solution to the limit equation.
Recall that we assumed that $\displaystyle \ell \defeq \lim_{p \to \infty} q(p)/p \in [0, 1)$ exists.

\begin{proof}[{\bf Proof of Theorem \ref{MThmLim1}}]
First, observe that from the uniform convergence, it holds that $u_{\infty} = g$ on $\partial \Omega$. Next, we   prove that the limit function $u_{\infty}$ is an $\infty$-harmonic function in its null set, this is,
$$
  - \Delta_{\infty} u_{\infty}(x) = 0 \quad \text{in} \quad \{u_{\infty} = 0\} \cap \Omega.
$$
To this end, let $x_0 \in \{u_{\infty} = 0\} \cap \Omega$ and $\phi \in C^2(\Omega)$ such that $u_{\infty}-\phi$ has a strict local maximum (resp. strict local minimum) at $x_0$. Since, up to a subsequence,  $u_p \to u_{\infty}$ locally uniformly, there exists a sequence $x_p \to x_0$ such that $u_p-\phi$ has a local maximum (resp. local minimum) at $x_p$. Moreover, if $u_p$ is a weak solution to \eqref{Eqp-Lapla} (consequently a viscosity solution, by Lemma \ref{EquivSols})  we obtain
$$
  -\left[|\nabla \phi(x_p)|^{p-2}\Delta \phi(x_p) + (p-2)|\nabla \phi(x_p)|^{p-4}\Delta_{\infty} \phi(x_p)\right] \leq -\lambda_0(x_p)\phi_{+}^q(x_p) \quad (\text{resp.}\,\, \geq ).
$$
Now, if $|\nabla \phi(x_0)| \neq 0$ we may divide both sides of the above inequality by $(p-2)|\nabla \phi(x_p)|^{p-4}$ (which is different from zero for $p$ large enough). Thus, we obtain that
$$
 - \Delta_{\infty} \phi(x_p) \leq \frac{|\nabla \phi(x_p)|^2 \Delta \phi(x_p)}{p-2} - \frac{ \lambda_0(x_p)\phi_{+}^q(x_p)}{(p-2)|\nabla \phi(x_p)|^{p-4}} \quad (\text{resp.}\,\, \geq ),
$$
where the RHS tends to zero as $p \to \infty$. Therefore,
$$
    - \Delta_{\infty} \phi(x_0) \leq 0 \quad (\text{resp.}\,\, \geq 0).
$$
Finally, since such an inequality is also satisfied if $|\nabla \phi(x_0)| = 0$ we conclude that $u_{\infty}$ is a viscosity sub-solution (resp. super-solution) in its null set.

Next, we will prove that $u_{\infty}$ is a viscosity solution to
$$
  \max\left\{-\Delta_{\infty} u_{\infty}(x), -|\nabla u_{\infty}(x)|+ u_{\infty}^{\ell}(x)\right\} = 0 \quad \text{in} \quad \{u_{\infty}>0\} \cap \Omega.
$$
First, let us prove that $u_{\infty}$ is a viscosity super-solution. To that end, fix $x_0 \in \{u_{\infty}>0\} \cap \Omega$ and let  $\phi \in C^2(\Omega)$ be a test function such that $u_{\infty}(x_0) = \phi(x_0)$ and the inequality $u_{\infty}(x) > \phi(x)$ holds for all $x \neq x_0$. We want to show that
$$
  - \Delta_{\infty} \phi(x_0) \geq 0 \quad \text{or} \quad -|\nabla \phi(x_0)|+ \phi^{\ell}(x_0) \geq 0.
$$
Notice that if $|\nabla \phi(x_0)| = 0$ there is nothing to prove. Hence, we may assume that
\begin{equation}\label{eq5.1}
  -|\nabla \phi(x_0)|+\phi^{\ell}(x_0)<0.
\end{equation}
As in the previous case, there exists a sequence $x_{p} \to x_0$ such that $u_{p}-\phi$ has a local minimum at $x_{p}$. Since $u_{p}$ is a weak super-solution (resp. a viscosity super-solution by Lemma \ref{EquivSols}) to \eqref{Eqp-Lapla} we get
$$
  -\left[|\nabla \phi(x_{p})|^{p-2}\Delta \phi(x_{p}) + (p-2)|\nabla \phi(x_{p})|^{p-4}\Delta_{\infty} \phi(x_{p})\right] \geq -\lambda_0(x_p)\phi_{+}^{q}(x_p).
$$
Now, dividing both sides by $(p-2)|\nabla \phi(x_{p})|^{p-4}$ (which is not  zero for $p\gg1$ due to \eqref{eq5.1}) we get
$$
  - \Delta_{\infty} \phi(x_{p}) \geq  \frac{|\nabla \phi(x_{p})|^2 \Delta \phi(x_{p})}{p-2} -
   \frac{\lambda_0(x_p)\phi_{+}^{q}(x_p)}{(p-2)|\nabla \phi(x_{p})|^{p-4}}.
$$
Passing to the limit as $p \to \infty$ in the above inequality we conclude that
$$
- \Delta_{\infty} \phi(x_0) \geq 0,
$$
which proves that $u_{\infty}$ is a viscosity super-solution.

The last part consists in proving that $u_{\infty}$ is a viscosity sub-solution. To this end, fix $x_0 \in \{u_{\infty}>0\} \cap \Omega$ and a test function $\phi \in C^2(\Omega)$ such that $u_{\infty}(x_0) = \phi(x_0)$ and the inequality $u_{\infty}(x) < \phi(x)$ holds for $x \neq x_0$. We want to prove that
\begin{equation}\label{eq5.2}
  - \Delta_{\infty} \phi(x_0) \leq 0 \quad \text{and} \quad -|\nabla \phi(x_0)|+\phi^{\ell}(x_0) \leq 0.
\end{equation}
One more time, there exists a sequence $x_{p} \to x_0$ such that $u_{p}-\phi$ has a local maximum at $x_{p}$ and since $u_{p}$ is a weak sub-solution (resp. viscosity sub-solution) to \eqref{Eqp-Lapla}, we have that
$$
 - \frac{|\nabla \phi(x_{p})|^2 \Delta \phi(x_{p})}{p-2} - \Delta_{\infty} \phi(x_{p}) \leq  -
   \frac{\lambda_0(x_p)\phi_{+}^{q}(x_p)}{(p-2)|\nabla \phi(x_{p})|^{p-4}} \leq 0.
$$
Thus, letting $p \to \infty$ we obtain $- \Delta_{\infty} \phi(x_0) \leq 0$. Moreover, if  $-|\nabla \phi(x_0)|+ \phi^{\ell}(x_0) > 0$, as $p \to \infty$, then the right hand side diverges to $-\infty$, causing a contradiction.
Therefore \eqref{eq5.2} holds.
\end{proof}

\begin{proof}[{\bf Proof of Theorem \ref{MThmLim3}}]
 From Lemma \ref{LemExistSol} any sequence of bounded weak solutions $(u_p)_{p>2}$ converges, up to a subsequence, to a limit, $u_{\infty}$, uniformly in $\Omega$. This limit $u_{\infty}$ also fulfills \eqref{EqLim} in the viscosity sense. On the other hand, \eqref{Sharp_est} claims that, for $0<r\ll 1$,
$$
 \displaystyle \sup_{B_r(x_0)} u(x) \leq 2 \cdot 2^\frac{p}{p-1-q} \mathfrak{C}_0  r^{\frac{p}{p-1-q}},
$$
for any normalized solution and any $x_0$ free boundary point. Now, for $\hat{x} \in \partial \{u_{\infty}>0\} \cap \Omega^{\prime}$ we have from uniform convergence that there exist $x_p \to \hat{x}$ with $x_p \in \partial \{u_p>0\} \cap \Omega^{\prime}$. Therefore,
$$
 \displaystyle \sup_{B_r(x_0)} u_{\infty}(x)  = \lim_{p \to \infty} \sup_{B_r(x_p)} u_p(x)\leq
  \lim_{p \to \infty} 2 \cdot 2^\frac{p}{p-1-q} \mathfrak{C}_0 r^{\frac{p}{p-1-q}} = 2\cdot 2^\frac{1}{1-\ell}
 (1-\ell)^\frac{1}{1-\ell} r^\frac{1}{1-\ell},
$$
which is the desired result.
\end{proof}

\begin{proof}[{\bf Proof of Theorem \ref{MThmLim2}}]
 Any sequence of weak solutions $(u_p)_{p\geq 2}$ converges, up to a subsequence, to a limit, $u_{\infty}$,
 uniformly in $\Omega$.
From Theorem \ref{LGR} we have that
$$
   \displaystyle \sup_{B_r(x_0)} \,u(x) \geq  \mathfrak{C}_{0} r^{\frac{p}{p-1-q}} \qquad \text{with} \quad   \mathfrak{C}_{0} \defeq \left[\inf_{\Omega}\lambda_0(x) \frac{ \left(p-1-q \right)^{p} }{ p^{p-1}(pq+N(p-1-q) )}\right]^{\frac{1}{p-1-q}}.
$$
As before for $\hat{x} \in \overline{\{u_{\infty}>0\}} \cap \Omega^{\prime}$ there exist $x_p \to \hat{x}$ with $x_p \in \overline{\{u_p>0\}} \cap \Omega^{\prime}$. Hence we get,
$$
 \displaystyle \sup_{B_r(x_0)} u_{\infty}(x)  = \lim_{p \to \infty} \sup_{B_r(x_p)} u_p(x) \geq (1-\ell)^\frac{1}{1-\ell}
 r^\frac{1}{1-\ell}.
$$
\end{proof}

As a byproduct of the previous estimates we prove that  any limit solution to \eqref{EqLim} is, near the free boundary,  ``trapped'' between the graph of two multiples of $\dist(\cdot, \partial \{u>0\})^{\frac{1}{1-\ell}}$, i.e.,

\begin{corollary} \label{coro62}
 Let $u_{\infty}$ be a uniform limit of $u_p$, solutions to \eqref{Eqp-Lapla}, and $\Omega^{\prime} \Subset \Omega$. Then, for any $x_0 \in \{u_{\infty}>0\} \cap \Omega^{\prime}$ the following estimate holds:
$$
 \mathfrak{C}_1(N, \ell)\dist(x_0, \partial \{u>0\})^{\frac{1}{1-\ell}} \leq  u_{\infty}(x_0) \leq \mathfrak{C}_2(N,\ell)\dist(x_0, \partial \{u>0\})^{\frac{1}{1-\ell}}.
$$
\end{corollary}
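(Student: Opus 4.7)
The two inequalities reduce to the limit growth estimates applied at a nearest free-boundary point, exactly as in the passage from Theorem \ref{LGR} and Theorem \ref{ThmGR1} to Corollaries \ref{CorNonDeg} and \ref{DistEst} at the $p$-level. I would fix $x_0 \in \{u_\infty > 0\} \cap \Omega'$ close enough to the free boundary, set $d \defeq \dist(x_0, \partial \{u_\infty > 0\})$, and pick $z_0 \in \partial \{u_\infty > 0\}$ realizing the distance, so that $|x_0 - z_0| = d$ and $B_d(x_0) \subset B_{2d}(z_0)$.

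For the upper bound, the strategy is essentially that of Corollary \ref{DistEst}. I would write
\[
u_\infty(x_0) \leq \sup_{B_d(x_0)} u_\infty \leq \sup_{B_{2d}(z_0)} u_\infty,
\]
and apply Theorem \ref{MThmLim3} at the free-boundary point $z_0$ with radius $2d$ to obtain $u_\infty(x_0) \leq 2 \cdot 2^{1/(1-\ell)}(1-\ell)^{1/(1-\ell)}(2d)^{1/(1-\ell)}$. This yields the stated upper bound with $\mathfrak{C}_2(N,\ell) \defeq 2 \cdot 4^{1/(1-\ell)}(1-\ell)^{1/(1-\ell)}$.

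For the lower bound the picture is more delicate, since Theorem \ref{MThmLim2} only controls the supremum on a ball around a free-boundary point, not the pointwise value at the interior point $x_0$. My plan is to pass to the limit $p \to \infty$ in the $p$-level pointwise non-degeneracy of Corollary \ref{CorNonDeg},
\[
u_p(x_0) \geq \mathfrak{C}_\sharp\bigl(N,p,q,\inf\lambda_0\bigr)\bigl(\dist(x_0,\partial\{u_p>0\})\bigr)^{p/(p-1-q)}.
\]
The LHS converges to $u_\infty(x_0)$ by Lemma \ref{LemExistSol}, and the exponent $p/(p-1-q)$ converges to $1/(1-\ell)$ since $p-1-q \sim p(1-\ell)$. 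Tracking the constant through the proofs of Corollary \ref{CorNonDeg} and Theorem \ref{LGR}, a direct asymptotic computation delivers $\mathfrak{C}_\sharp^{(p)} \to \mathfrak{C}_1(N,\ell) > 0$, of the form $\bigl((1-\ell)/2\bigr)^{1/(1-\ell)}$ up to harmless factors absorbed into $\mathfrak{C}_1$.

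\textbf{Main obstacle.} The hard part will be justifying that $\dist(x_0, \partial\{u_p > 0\}) \to d$ as $p \to \infty$. The inequality $\liminf_p \dist(x_0,\partial\{u_p>0\}) \geq d$ is easy from the uniform convergence $u_p \to u_\infty$, since any limit of $p$-free-boundary points belongs to $\{u_\infty = 0\}$ and therefore has distance at least $d$ from $x_0$. The reverse bound requires producing, for each $z_0 \in \partial\{u_\infty > 0\}$, an approximating sequence of $p$-free-boundary points $z_p \to z_0$; this is precisely the one-sided Hausdorff convergence of free boundaries furnished by Theorem \ref{MThmLim5}. Once this convergence is in hand, passing to the limit in the displayed inequality closes the argument with the explicit constant $\mathfrak{C}_1(N,\ell)$ identified above.
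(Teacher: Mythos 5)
Your upper bound is handled exactly as in the paper, which simply invokes the same comparison with a nearest free-boundary point as Corollary \ref{DistEst}, so that part is fine.

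For the lower bound there are two genuine gaps. The more serious one is circularity: you invoke Theorem \ref{MThmLim5} (Hausdorff convergence of the free boundaries) to get the inequality $\limsup_p \dist(x_0, \partial\{u_p>0\}) \leq d$, but the paper proves Theorem \ref{MThmLim5} \emph{using} Corollary \ref{coro62} --- in that proof, the step ``if $x_0 \in \{u_\infty > 0\}$ then by Corollary \ref{coro62} we get $u_\infty(x_0) \geq C(\dist(x_0, \partial\{u_\infty>0\}))^{1/(1-\ell)}$'' is precisely the lower bound you are trying to establish. So as presented, the argument is circular within the paper's logical ordering; you would need to prove the required one-sided Hausdorff convergence independently, which is not done.

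The second gap concerns the constant. Corollary \ref{CorNonDeg} is established via a nonconstructive compactness/contradiction argument, so the constant $\mathfrak{C}_\sharp = \mathfrak{C}_\sharp(N,p,q,\lambda_0)$ is not given explicitly and there is no control of its dependence on $p$; the claim that ``a direct asymptotic computation delivers $\mathfrak{C}_\sharp^{(p)} \to \mathfrak{C}_1(N,\ell) > 0$'' has no basis in the proof of Corollary \ref{CorNonDeg} and would require re-deriving that corollary with quantitative, $p$-stable constants (note that the constant $\mathfrak{C}_0$ in Theorem \ref{LGR} does have a clean limit $(1-\ell)^{1/(1-\ell)}$, but $\mathfrak{C}_\sharp$ is not of that kind). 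The paper sidesteps both problems by re-running the compactness argument directly at the limit level: assume the bound fails along a sequence $x_k$ with $d_k \to 0$, rescale $v_k(y) = u_\infty(x_k + d_k y)/d_k^{1/(1-\ell)}$, obtain a uniform H\"older modulus for $v_k$ by passing to the limit in the $p$-level H\"older estimates, and contradict the explicit non-degeneracy bound from Theorem \ref{MThmLim2}. This removes any need to control $\dist(x_0, \partial\{u_p>0\})$ or to track the nonconstructive constants.
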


\begin{proof}
The upper bound for $u_\infty(x_0)$ follows as in Corollary \ref{DistEst}. For the remaining inequality, let us suppose that such lower bound constant does not exist. Then it should exist a sequence $x_k \in \{u_{\infty}>0\} \cap \Omega^{\prime}$ such that
$$
d_k \defeq \dist(x_k, \partial \{u_{\infty}>0\} \cap \Omega^{\prime}) \to 0 \quad \text{as} \quad k \to \infty \quad \text{and} \quad u(x_k) \leq \frac{d_k^{\frac{1}{1-\ell}}}{k}.
$$
Now, define the auxiliary function $v_k:B_1 \to \R$ by
$$
   v_k(y) \defeq \frac{u_{\infty}(x_k+d_ky)}{d_k^{\frac{1}{1-\ell}}}.
$$
From uniform convergence $v_{k, p} \to v_k$ locally uniformly as $k \to \infty$, where
$$
   v_{k, p}(y) \defeq \frac{u_{p}(x_k+d_ky)}{d_k^{\frac{p}{p-1-q}}}.
$$
Thus,
$$
   -\Delta_p v_{k, p}(y) + \lambda_0(x_k+d_ky)(v_{k, p})_{+}^q(y) \quad \text{in} \quad B_1
$$
in the weak sense. From classical regularity estimates $v_{k, p}$ is H\"{o}lder continuous (see Theorem \ref{MorIneq}). After passing to the limit as $k \to \infty$ we infer that $v_k$ is $\alpha-$H\"{o}lder continuous for any $\alpha \in (0, 1)$.

For this reason, $v_k(y) \leq C|y-0|^{\alpha}+v_k(0) \leq \max\{1, C\}\left(d_k^{\alpha} + \frac{1}{k}\right)\,\,\, \forall\,\, y \in B_1$.
Finally, from the non-degeneracy result, Theorem \ref{MThmLim2}, and the last sentence we obtain
$$
  \displaystyle 0<\left(\frac{1}{2}(1-\ell)\right)^{\frac{1}{1-\ell}} \leq \frac{1}{d_k^{\frac{1}{1-\ell}}}\sup_{B_{\frac{d_k}{2}}(x_k)} u_{\infty}(x) = \sup_{B_{\frac{1}{2}}} v_k(z) \leq \max\{1, C\}\left(d_k^{\alpha} + \frac{1}{k}\right) \to 0 \quad \text{as} \quad k \to \infty.
$$
This contradiction concludes the proof.
\end{proof}

\begin{proof}[{\bf Proof of Theorem \ref{ThmGradLim}}]
From \eqref{EquEstGrad} we have that
$$
   \displaystyle |\nabla u_p(x)| \leq \sup_{B_r(x_p)} |\nabla u_p(x)| \leq 2^{\frac{1+q}{p-1-q}}
   \mathfrak{C}_0r^{\frac{1+q}{p-1-q}} \qquad \text{for} \,\,\,0<r\ll1,
$$
where $x_p \in \partial \{u_p>0\} \cap \Omega^{\prime}$. On the other hand, any sequence of weak solutions $(u_p)_{p\geq 2}$ is bounded in the $W^{1, s}-$topology for all $1<s< \infty$ (and hence we can assume, extracting a subsequence if necessary, that $u_p \to u_\infty$ weakly in $W^{1, 2}$).
Now, for $x_0 \in \partial \{u_{\infty}>0\} \cap \Omega^{\prime}$ there exist $x_p \to x_0$ with $x_p \in \partial \{u_p>0\} \cap \Omega^{\prime}$. Taking into account that $u_p \rightharpoonup \hat{u}$ in $L^2(\Omega)$ and $u_p \to u_{\infty}$ uniformly in $\Omega$ we conclude
$$
  \displaystyle \intav{B_r(x_0)} |\nabla u_{\infty}(x)|dx  \leq \lim_{p\to \infty} \intav{B_r(x_p)} |\nabla u_p(x)|dx \leq 2^{\frac{\ell}{1-\ell}}(1-\ell)^{\frac{1}{1-\ell}} r^{\frac{1}{1-\ell}}.
$$
\end{proof}

\subsection{Further properties for limit solutions}

Now, we present some relevant geometric and measure theoretic properties for limit solutions and their free boundaries.
First, we prove that a version of the well-known Harnack inequality is valid for such limit solutions in balls touching the free boundary. Such a quantitative result is a novelty in the literature with regard to sub-linear limit problems.

\begin{corollary}[{\bf Harnack inequality for limit solutions}]\label{HarIneq}
Let $u_{\infty}$ be a limit solution to \eqref{MThmLim1}, $\Omega^{\prime} \Subset \Omega$ and $x_0 \in \{u_{\infty} > 0\} \cap \Omega^{\prime}$ an interior point such that $d \defeq \dist(x_0, \partial \{u_{\infty}>0\}) \leq \frac{\mathfrak{R}_{\infty}}{2}$, where $\mathfrak{R}_{\infty}$ comes from the limit in \eqref{EqNScond} as $p\to \infty$ in the quantity $\mathfrak{R}_{0}$. Then, there exists $\mathfrak{C}=\mathfrak{C}(N, \ell, [u_{\infty}]_{\text{Lip}(\Omega)})$ such that
\begin{equation} \label{ec.harnack}
   \displaystyle \sup_{\overline{B_{\frac{d}{2}}(x_0)}} u_{\infty}(x) \leq \mathfrak{C}
   \max\left\{\frac{d}{2},\,\,\inf_{\overline{B_{\frac{d}{2}}(x_0)}}  v_{\infty}(x)\right\}.
\end{equation}
\end{corollary}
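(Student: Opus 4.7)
My plan is to derive \eqref{ec.harnack} by applying Serrin's Harnack inequality (Theorem~\ref{harnack}) to the approximating solutions $u_p$ in a ball sitting inside the positivity region, and then passing to the limit $p\to\infty$ using the uniform convergence and uniform Lipschitz bound provided by Lemma~\ref{LemExistSol}. Since $u_\infty(x_0)>0$ and $d=\dist(x_0,\partial\{u_\infty>0\})>0$, the continuous function $u_\infty$ attains a strictly positive minimum on $\overline{B_{3d/4}(x_0)}$, so uniform convergence $u_p\to u_\infty$ forces $u_p>0$ throughout this compact set for all $p$ sufficiently large. In particular, on $B_{3d/4}(x_0)$ each $u_p$ is a non-negative weak solution of $-\Delta_p u_p=-\lambda_0\,u_p^q$ with $L^\infty$-bounded right-hand side.

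Rescaling by $v_p(y)\defeq u_p(x_0+dy)$ on $B_1$, a direct computation shows $-\Delta_p v_p = -d^p\,\lambda_0(x_0+d\,\cdot)\,v_p^q$ in $B_1$, with right-hand side bounded in $L^\infty(B_1)$ by $d^p\|\lambda_0\|_{L^\infty(\Omega)}\|u_p\|_{L^\infty(\Omega)}^q$. Applying Theorem~\ref{harnack} to $v_p$ and undoing the scaling gives
$$
\sup_{B_{d/2}(x_0)} u_p \;\leq\; C(N,p)\!\left(\inf_{B_{d/2}(x_0)} u_p \;+\; d^{p/(p-1)}\,\|\lambda_0\|_{L^\infty(\Omega)}^{1/(p-1)}\,\|u_p\|_{L^\infty(\Omega)}^{q/(p-1)}\right).
$$
Now I would send $p\to\infty$: uniform convergence transfers both sup and inf to $u_\infty$, while $q(p)/p\to\ell$ and the uniform Lipschitz bound from Lemma~\ref{LemExistSol} yield $d^{p/(p-1)}\to d$, $\|\lambda_0\|_{L^\infty}^{1/(p-1)}\to 1$ and $\|u_p\|_{L^\infty}^{q/(p-1)}\to \|u_\infty\|_{L^\infty}^{\ell}$. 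Passing to the limit produces
$$
\sup_{B_{d/2}(x_0)} u_\infty \;\leq\; C_\infty\!\left(\inf_{B_{d/2}(x_0)} u_\infty \;+\; \|u_\infty\|_{L^\infty}^{\ell}\,d\right),
$$
and writing $d=2\cdot(d/2)$ together with $a+b\leq 2\max\{a,b\}$ rearranges this into \eqref{ec.harnack} with $\mathfrak{C}=C_\infty\bigl(1+2\|u_\infty\|_{L^\infty}^{\ell}\bigr)$; the dependence on $[u_\infty]_{Lip(\Omega)}$ enters through the Lipschitz control of $\|u_\infty\|_{L^\infty}$.

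The main obstacle is verifying that Serrin's constant $C(N,p)$ in Theorem~\ref{harnack} stays uniformly bounded as $p\to\infty$; this is a classical but nontrivial fact, of the same flavour as the passage to Harnack inequalities for $\infty$-harmonic functions in \cite{ACJ}. The hypothesis $d\leq\mathfrak{R}_\infty/2$ precisely places us in the regime in which the compactness and approximation machinery culminating in Corollary~\ref{coro62} is available, ensuring that the limit transition above remains valid inside $B_{d/2}(x_0)$; a secondary technical point is that one should pick a slightly larger reference ball (e.g.\ $B_{3d/4}(x_0)$) to secure strict positivity of $u_p$ up to the boundary of the ball on which Serrin's inequality is invoked.
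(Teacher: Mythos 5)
Your proposal takes a genuinely different route from the paper, and unfortunately it hinges on an unestablished step. The paper never applies Serrin's Harnack inequality at the level of $u_p$ and then passes to the limit; instead it works directly with the already-proved limit estimates: picking $z_1,z_2\in\overline{B_{d/2}(x_0)}$ achieving the infimum and supremum, it lower-bounds $u_\infty(z_1)$ by Corollary~\ref{coro62} (using $\dist(z_1,\partial\{u_\infty>0\})\geq d/2$), upper-bounds $u_\infty(z_2)$ by Lipschitz regularity of $u_\infty$ together with Theorem~\ref{MThmLim3} applied at a free-boundary point realizing the distance, and combines the two. That argument is entirely self-contained inside the limit theory and never touches the $p$-dependence of Serrin's constant.

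The gap in your argument is precisely the point you flag as "the main obstacle": you need the constant $C(N,p)$ in Theorem~\ref{harnack} to remain bounded as $p\to\infty$, and also need the precise form $\sup\leq C\bigl(\inf+\|f\|_{L^\infty}^{1/(p-1)}\bigr)$ to survive this passage. This is not a classical fact and is not resolved by \cite{ACJ}: the Harnack inequality for $\infty$-harmonic functions in \cite{ACJ} is obtained by comparison with cones, not by uniformizing Serrin's Moser-iteration constant, and it is for the homogeneous equation, not for the equation with a right-hand side normalized by $\|f\|^{1/(p-1)}$. In the standard proof, Serrin's constant degenerates as $p\to\infty$; whether a uniform version with a RHS holds is a delicate question that the paper makes no attempt to answer, precisely because it does not need to. Absent a citation or an independent proof of that uniformity, your derivation is incomplete, whereas the paper's route (Corollary~\ref{coro62}, Theorem~\ref{MThmLim3}, Lipschitz bound) yields the same conclusion elementarily. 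Also note, as a secondary point, that your final constant depends on $\|u_\infty\|_{L^\infty(\Omega)}^\ell$; the statement requests dependence on $[u_\infty]_{\mathrm{Lip}(\Omega)}$, which is compatible since the $L^\infty$-norm is controlled by the Lipschitz seminorm and the fixed boundary data, but this should be said explicitly.
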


\begin{proof}
Let $z_1, z_2 \in \overline{B_{\frac{d}{2}}(x_0)}$ be points such that
$$
     \inf_{\overline{B_{\frac{d}{2}}(x_0)}} u_{\infty}(x) = u_{\infty}(z_1) \quad \mbox{and} \quad \sup_{\overline{B_{\frac{d}{2}}(x_0)}} v_{\infty}(x) = u_{\infty}(z_2).
$$
Since $\dist(z_1, \partial \{u_{\infty}>0\}) \geq \frac{d}{2}$, from Corollary \ref{coro62} we get
\begin{equation}\label{eqHar6.1}
   u_{\infty}(z_1) \geq \mathfrak{C}_1(N, \ell) \left(\frac{d}{2}\right)^{\frac{1}{1-\ell}}.
\end{equation}
Moreover, from the Lipschitz regularity for limit solutions we obtain
\begin{equation}\label{eqHar6.2}
   u_{\infty}(z_2) \leq [u_{\infty}]_{\text{Lip}(\Omega)} \left(\frac{d}{2} + v_{\infty}(x_0)\right).
\end{equation}
Now, by choosing $y \in \partial \{u_{\infty} > 0\}$ such that $d=|x_0-y|$, we get as consequence of Theorem \ref{MThmLim3}
\begin{equation}\label{eqHar6.3}
   u_{\infty}(x_0) \leq \sup\limits_{\overline{B_d(y)}} u_{\infty}(x) \leq 2\cdot2^{\frac{1}{1-\ell}} \left(1-\ell\right)^{\frac{1}{1-\ell}}d^{\frac{1}{1-\ell}} = 2\cdot2^{\frac{2}{1-\ell}}\left(1-\ell\right)^{\frac{1}{1-\ell}}\left(\frac{d}{2}\right)^{\frac{1}{1-\ell}}.
\end{equation}
Combining \eqref{eqHar6.1}, \eqref{eqHar6.2} and \eqref{eqHar6.3}, we obtain \eqref{ec.harnack}.
\end{proof}

 \begin{corollary}[{\bf Uniform positive density}]\label{UPDFB}Let $u_{\infty}$ be a limit solution to \eqref{MThmLim1} in $B_1$ and $x_0 \in \partial \{v > 0\} \cap B_{\frac{1}{2}}$ be a free boundary point. Then for any $0<\rho< \frac{1}{2}$,
$$
     \mathcal{L}^N(B_{\rho}(x_0) \cap\{u_{\infty}>0\})\geq \theta \rho^N,
$$
for a constant $\theta>0$ that depends only on the dimension and $\ell$.
\end{corollary}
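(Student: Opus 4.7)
The plan is to derive the density bound from the two-sided control on $u_\infty$ near the free boundary that has already been established, namely the strong non-degeneracy of Theorem \ref{MThmLim2} and the upper sandwich bound of Corollary \ref{coro62}. The key observation is that both sides scale as the same power $\rho^{1/(1-\ell)}$, so their quotient is a purely geometric constant.

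First I would apply Theorem \ref{MThmLim2} on the ball $B_{\rho/2}(x_0)$ to produce a point $y\in\overline{B_{\rho/2}(x_0)}$ with
\begin{equation*}
u_\infty(y)\ \geq\ (1-\ell)^{\frac{1}{1-\ell}}\Bigl(\tfrac{\rho}{2}\Bigr)^{\frac{1}{1-\ell}}.
\end{equation*}
In particular $y\in\{u_\infty>0\}$, so it makes sense to set $d\defeq \dist(y,\partial\{u_\infty>0\})>0$.

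Next I would feed $y$ into Corollary \ref{coro62} to obtain
\begin{equation*}
(1-\ell)^{\frac{1}{1-\ell}}\Bigl(\tfrac{\rho}{2}\Bigr)^{\frac{1}{1-\ell}}\ \leq\ u_\infty(y)\ \leq\ \mathfrak{C}_2(N,\ell)\,d^{\frac{1}{1-\ell}}.
\end{equation*}
Raising both sides to the power $1-\ell$ yields a bound of the form $d\geq c_0(N,\ell)\,\rho$, where $c_0(N,\ell)$ depends only on $\ell$ (and the dimension through $\mathfrak{C}_2$). Thus the entire ball $B_d(y)$ sits inside $\{u_\infty>0\}$.

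Finally I would combine this with the location of $y$ to stay inside $B_\rho(x_0)$. Setting $r\defeq \min\{d,\rho/2\}$ and using $|y-x_0|\leq \rho/2$, any $z\in B_r(y)$ satisfies $|z-x_0|\leq\rho/2+r\leq\rho$, hence
\begin{equation*}
B_r(y)\ \subset\ B_\rho(x_0)\cap\{u_\infty>0\}.
\end{equation*}
Therefore
\begin{equation*}
\mathcal{L}^N\bigl(B_\rho(x_0)\cap\{u_\infty>0\}\bigr)\ \geq\ \omega_N\, r^N\ \geq\ \omega_N\,\bigl(\min\{c_0(N,\ell),1/2\}\bigr)^N\rho^N,
\end{equation*}
giving the desired density constant $\theta=\theta(N,\ell)$. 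The only subtle point is verifying that the constant from Theorem \ref{MThmLim2} and the one from Corollary \ref{coro62} combine into a genuine lower bound of order $\rho$ for $d$; once the two matching exponents $1/(1-\ell)$ are used, this is automatic, so no serious obstacle appears beyond a careful bookkeeping of constants.
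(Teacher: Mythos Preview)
Your argument is correct and follows essentially the same route as the paper: combine the non-degeneracy estimate (Theorem \ref{MThmLim2}) with the matching upper growth at the free boundary to produce a ball of radius comparable to $\rho$ entirely contained in $\{u_\infty>0\}\cap B_\rho(x_0)$. The only cosmetic differences are that the paper applies non-degeneracy at the full radius $r=\rho$ (getting $\hat y\in\partial B_\rho(x_0)$) and then invokes Theorem \ref{MThmLim3} via a short contradiction to show $B_{\kappa\rho}(\hat y)\subset\{u_\infty>0\}$, whereas you work at radius $\rho/2$ and call the packaged upper bound of Corollary \ref{coro62} directly to bound $d=\dist(y,\partial\{u_\infty>0\})$ from below; both manoeuvres are equivalent.
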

\begin{proof} Applying Theorem \ref{MThmLim2} there exists a point $\hat{y} \in  \partial B_r(x_0) \cap \{u_{\infty}>0\}$ such that,
\begin{equation}\label{dens}
    v(\hat{y})\geq  (1-\ell)^{\frac{1}{1-\ell}} r^{\frac{1}{1-\ell}}.
\end{equation}
Moreover, from Theorem \ref{MThmLim3} there exists $\kappa>0$ small enough depending only on $\ell$
such that
\begin{equation}\label{inclusion}
    B_{\kappa r}(\hat{y}) \subset  \{u_{\infty}>0\},
\end{equation}
where the constant $\kappa$ is given by
$$
   \kappa \defeq \left(\frac{(1-\ell)^{\frac{1}{1-\ell}}}{14\cdot2^{\frac{1}{1-\ell}}(1-\ell)^{\frac{1}{1-\ell}}}\right)^{1-\ell}.
$$
In fact, if this does not holds, it exists a free boundary point $\hat{z} \in B_{\kappa r}(\hat{y})$. Consequently, from \eqref{dens} we obtain that
$$
   (1-\ell)^{\frac{1}{1-\ell}}r^{\frac{1}{1-\ell}} \leq u_{\infty}(\hat{y}) \leq \sup_{B_{\kappa r}(\hat{z})} u_{\infty}(x) \leq 2\cdot2^{\frac{1}{1-\ell}}(1-\ell)^{\frac{1}{1-\ell}}(\kappa r)^{\frac{1}{1-\ell}} = \frac{1}{7}(1-\ell)^{\frac{1}{1-\ell}}r^{\frac{1}{1-\ell}},
$$
which yields a contradiction. Therefore,
$$
    B_{\kappa r}(\hat{y}) \cap B_r(x_0) \subset  B_r(x_0) \cap \{u_{\infty}>0\},
$$
and hence
$$
     \mathcal{L}^N(B_{\rho}(x_0) \cap\{u_{\infty}>0\})\geq \mathcal{L}^N(B_{\rho}(x_0) \cap B_{\kappa r}(\hat{y}))\geq \theta r^N,
$$
which proves the result.
\end{proof}

\begin{definition}[{\bf $\zeta$-Porous set}] A set $\mathfrak{S} \in \R^N$ is said to be porous with porosity constant $0<\zeta \leq 1$ if there exists an $\mathfrak{R} > 0$ such that for each $x \in \mathfrak{S}$ and $0 < r < \mathfrak{R}$ there exists a point $y$ such that $B_{\zeta r}(y) \subset B_r(x) \setminus \mathfrak{S}$.
\end{definition}

\begin{corollary}[{\bf Porosity of the free boundary}]\label{CorPor} Let $u_{\infty}$ be a limit solution to \eqref{MThmLim1} in $\Omega$. There exists a constant $0<\xi =  \xi(N, \ell) \leq 1$ such that
\begin{equation}\label{eqPor}
       \mathcal{H}^{N-\xi}\left(\partial \{u_{\infty}>0\}\cap B_{\frac{1}{2}}\right)< \infty.
\end{equation}
\end{corollary}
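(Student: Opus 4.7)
The plan is to deduce \eqref{eqPor} by first verifying that $\partial\{u_{\infty}>0\}\cap B_{1/2}$ is a $\zeta$-porous subset of $\R^N$ with porosity constant $\zeta = \zeta(\ell) \in (0,1/2)$, and then invoking the classical principle that any $\zeta$-porous set $S \subset \R^N$ has locally finite $(N-\xi)$-dimensional Hausdorff measure for some $\xi = \xi(N,\zeta) > 0$ (see, e.g., the works of Martio--Vuorinen and V\"{a}is\"{a}l\"{a}). With this in hand, the choice $\xi \defeq \xi(N,\zeta(\ell))$ directly yields the desired finiteness.

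The central step is therefore the porosity. I would fix $x_0 \in \partial\{u_{\infty}>0\}\cap B_{1/2}$ and $r>0$ small enough that Theorems \ref{MThmLim3} and \ref{MThmLim2} apply uniformly, then apply Theorem \ref{MThmLim2} on the half-size ball $B_{r/2}(x_0)$ to produce a point $y_r \in \overline{B_{r/2}(x_0)}$ with
\[
u_{\infty}(y_r) \geq (1-\ell)^{\frac{1}{1-\ell}}\left(\frac{r}{2}\right)^{\frac{1}{1-\ell}}.
\]
Mirroring the argument in the proof of Corollary \ref{UPDFB} that led to \eqref{inclusion}, I would then show that no free boundary point can sit inside $B_{\zeta r}(y_r)$: if $\hat z \in B_{\zeta r}(y_r) \cap \partial\{u_{\infty}>0\}$ were such a point, Theorem \ref{MThmLim3} applied at $\hat z$ would force
\[
u_{\infty}(y_r) \leq \sup_{B_{\zeta r}(\hat z)} u_{\infty} \leq 2 \cdot 2^{\frac{1}{1-\ell}}(1-\ell)^{\frac{1}{1-\ell}}(\zeta r)^{\frac{1}{1-\ell}},
\]
and comparing with the non-degeneracy lower bound would produce a contradiction as soon as $\zeta < 2^{-(3-\ell)}$. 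Since $|y_r - x_0| \leq r/2$ and $\zeta < 1/2$, the ball $B_{\zeta r}(y_r)$ is contained in $B_r(x_0)$, and hence $B_{\zeta r}(y_r) \subset B_r(x_0) \setminus \partial\{u_{\infty}>0\}$, which is exactly the $\zeta$-porosity condition.

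The one delicate point is the matching of scales: applying non-degeneracy on the full ball $B_r(x_0)$ would place $y_r$ on (or near) $\partial B_r(x_0)$ and the candidate porosity ball could then protrude from $B_r(x_0)$. Halving the radius is precisely the device that lets $B_{\zeta r}(y_r)$ simultaneously (i) stay inside $B_r(x_0)$ and (ii) remain clear of the free boundary via the sharp growth estimate. Once porosity is verified, the passage from porosity to the Hausdorff measure estimate is a standard covering/iteration argument yielding $\xi$ of the order $c(N)\zeta(\ell)^N$, and poses no further technical obstacle.
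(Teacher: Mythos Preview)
Your proposal is correct and follows essentially the same approach as the paper: combine the non-degeneracy lower bound (Theorem \ref{MThmLim2}) with the sharp upper growth estimate (Theorem \ref{MThmLim3}) to show that near any free boundary point there is a ball of comparable radius entirely free of free boundary points, and then invoke a standard porosity $\Rightarrow$ Hausdorff dimension result. The only cosmetic difference is in the scale-matching device: the paper applies non-degeneracy on the full ball $B_r(x)$, obtains $y\in\partial B_r(x)$, and then slides the center along the segment $[x,y]$ to place the porosity ball $B_{\zeta r/2}(\hat y)$ inside $B_r(x)$, whereas you preemptively halve the non-degeneracy radius so that $y_r\in\overline{B_{r/2}(x_0)}$ and $B_{\zeta r}(y_r)\subset B_r(x_0)$ automatically; both tricks achieve the same containment and the resulting porosity constants differ only by harmless factors.
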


\begin{proof}
Let $\mathfrak{R}>0$ and $x_0\in\Omega$ be such that $\overline{B_{4\mathfrak{R}}(x_0)}\subset \Omega$. We will show that $\partial \{u_{\infty} >0\} \cap B_\mathfrak{R}(x_0)$ is a $\frac{\zeta}{2}$-porous set for a universal constant $0< \zeta \leq 1$. To this end, let $x\in \partial \{u_{\infty} >0\} \cap B_{\mathfrak{R}}(x_0)$. For each $r\in(0, \mathfrak{R})$ we have $\overline{B_r(x)}\subset B_{2\mathfrak{R}}(x_0)\subset\Omega$. Now, let $y\in\partial B_r(x)$ such that $u_{\infty}(y) = \sup\limits_{\partial B_r(x)} u(t)$. By Theorem \ref{MThmLim2}
\begin{equation}\label{5.1}
    u_{\infty}(y)\geq (1-\ell)^{\frac{1}{1-\ell}} r^{\frac{1}{1-\ell}}.
\end{equation}
On the other hand, near the free boundary, from Theorem \ref{MThmLim3} we have
\begin{equation}\label{5.2}
    u_{\infty}(y)\leq 2\cdot2^{\frac{1}{1-\ell}} (1-\ell)^{\frac{1}{1-\ell}}d(y)^{\frac{1}{1-\ell}},
\end{equation}
where $d(y) \defeq \dist(y, \partial \{u_{\infty}>0\} \cap \overline{B_{2\mathfrak{R}}(x_0)})$. From \eqref{5.1} and \eqref{5.2} we get
\begin{equation}\label{5.3}
    d(y)\geq\zeta r
\end{equation}
for a positive constant $0<\zeta \defeq \left(\frac{(1-\ell)^{\frac{1}{1-\ell}}}{2\cdot2^{\frac{1}{1-\ell}}(1-\ell)^{\frac{1}{1-\ell}}}\right)^{1-\ell}<1$.

Now, let $\hat{y}$, in the segment joining  $x$ and $y$, be such that $|y-\hat{y}|=\frac{\zeta r}{2}$, then there holds
\begin{equation}\label{5.4}
   B_{\frac{\zeta}{2}r}(\hat{y})\subset B_{\zeta r}(y)\cap B_r(x).
\end{equation}
Indeed, for each $z\in B_{\frac{\zeta}{2}r}(\hat{y})$
\begin{align*}
   |z-y|&\leq |z-\hat{y}|+|y-\hat{y}|<\frac{\zeta r}{2}+\frac{\zeta r}{2}=\zeta r,\\
   |z-x|&\leq|z-\hat{y}|+\big(|x-y|-|\hat{y}-y|\big)\leq\frac{\zeta r}{2}+\left(r-\frac{\zeta r}{2}\right)=r.
\end{align*}
Then, since by \eqref{5.3} $B_{\zeta r}(y)\subset B_{d(y)}(y)\subset\{u_{\infty}>0\}$, we get $B_{\zeta r}(y)\cap B_r(x)\subset\{u_{\infty}>0\}$, which together with \eqref{5.4} implies that
$$
   B_{\frac{\zeta}{2}r}(\hat{y})\subset B_{\zeta r}(y)\cap B_r(x)\subset B_r(x)\setminus\partial\{u_{\infty}>0\}\subset B_r(x)\setminus \partial\{u_{\infty}>0\} \cap B_{\mathfrak{R}}(x_0).
$$
Therefore, $\partial\{v>0\} \cap B_{\mathfrak{R}}(x_0)$ is a $\frac{\zeta}{2}$-porous set. Finally, the $(N-\xi)$-Hausdorff measure estimates in \eqref{eqPor} follow from \cite{KR}.
\end{proof}

Particularly, Corollary \ref{CorPor} assures that the free boundary $\partial \{u_{\infty}>0\}$ has
$N$-dimensional Lebesgue measure zero.

\begin{theorem}[{\bf Convergence of the free boundaries}]\label{MThmLim5} Let $u_p$ be a sequence of solutions to \eqref{Eqp-Lapla}, $u_{\infty}$ its uniform limit. Then
$$
  \partial \{u_p > 0\} \to \partial \{u_{\infty} > 0\}\quad \mbox{as} \quad  p\to \infty,
$$
locally in the sense of the Hausdorff distance.
\end{theorem}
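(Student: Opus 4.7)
The plan is to deduce the local Hausdorff convergence from two one-sided inclusions. Fix a compact set $K \Subset \Omega$ and $\delta>0$; it suffices to show, for every sufficiently large $p$, both
\begin{equation*}
  \partial \{u_p>0\}\cap K \subset \{x\colon \dist(x,\partial\{u_\infty>0\})<\delta\}\quad \text{and}\quad \partial\{u_\infty>0\}\cap K \subset \{x\colon \dist(x,\partial\{u_p>0\})<\delta\}.
\end{equation*}
Both inclusions will be argued by contradiction and compactness, combining the uniform convergence $u_p \to u_\infty$ from Lemma \ref{LemExistSol} with the sharp non-degeneracy estimates of Theorem \ref{LGR} (at finite $p$) and Theorem \ref{MThmLim2} (in the limit).

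For the first inclusion, suppose along a subsequence $x_{p_k}\in \partial\{u_{p_k}>0\}\cap K$ converges to some $x_0 \in K$ with $\dist(x_0,\partial\{u_\infty>0\})\geq \delta$. Since $\{u_{p_k}>0\}$ is open and $u_{p_k}\geq 0$ continuous, $u_{p_k}(x_{p_k})=0$, so uniform convergence forces $u_\infty(x_0)=0$. On the other hand, Theorem \ref{LGR} applied at $x_{p_k}$ yields $\sup_{B_r(x_{p_k})} u_{p_k}\geq \mathfrak{C}_0^{p_k,q_k} r^{p_k/(p_k-1-q_k)}$ for small $r$; as in the proof of Theorem \ref{MThmLim2}, the explicit formula for $\mathfrak{C}_0$ shows $\mathfrak{C}_0^{p_k,q_k}\to (1-\ell)^{1/(1-\ell)}$ under the scaling $q(p)/p\to \ell$. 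Passing to the limit through uniform convergence and $x_{p_k}\to x_0$ produces $\sup_{B_r(x_0)} u_\infty \geq (1-\ell)^{1/(1-\ell)} r^{1/(1-\ell)}>0$ for every small $r$, so $x_0\in \overline{\{u_\infty>0\}}$. Coupled with $u_\infty(x_0)=0$, this forces $x_0\in \partial\{u_\infty>0\}$, contradicting the standing assumption.

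For the second inclusion, fix $y_0\in \partial\{u_\infty>0\}\cap K$ and suppose along some subsequence $p_k\to \infty$ that $\dist(y_0,\partial\{u_{p_k}>0\})\geq \delta$. Then $B_\delta(y_0)\cap \partial\{u_{p_k}>0\}=\emptyset$, and the continuity of $u_{p_k}$ together with the connectedness of $B_\delta(y_0)$ produces the dichotomy $B_\delta(y_0)\subset\{u_{p_k}>0\}$ or $B_\delta(y_0)\subset\{u_{p_k}=0\}$. The ``dead'' alternative makes $u_\infty\equiv 0$ on $B_\delta(y_0)$ by uniform convergence, directly contradicting the non-degeneracy estimate \eqref{estim.22} applied at $y_0$. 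In the ``positive'' alternative, $y_0$ lies at distance at least $\delta$ from $\partial\{u_{p_k}>0\}$, and Corollary \ref{CorNonDeg} gives $u_{p_k}(y_0)\geq \mathfrak{C}_\sharp^{p_k,q_k}\,\delta^{p_k/(p_k-1-q_k)}$. Passing to the limit, provided $\liminf_k\mathfrak{C}_\sharp^{p_k,q_k}>0$, yields $u_\infty(y_0)\geq c\,\delta^{1/(1-\ell)}>0$, contradicting $u_\infty(y_0)=0$.

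The principal obstacle is precisely the uniformity as $p\to\infty$ of the constant $\mathfrak{C}_\sharp^{p,q}$ from Corollary \ref{CorNonDeg}, which is not explicit in its statement. To resolve it, I will rerun the compactness proof of that corollary simultaneously over the whole family $(u_{p_k})_k$: its two ingredients are the Morrey embedding, whose constant $\mathfrak{C}(N,p)$ does not deteriorate as $p\to\infty$ (by the explicit expression following Theorem \ref{MorIneq}), and Theorem \ref{LGR}, whose constant has the positive limit recalled above. The rescaled functions $v_k(y)=u_{p_k}(x_k+d_ky)/d_k^{p_k/(p_k-1-q_k)}$ can then be processed simultaneously in $k$ to reproduce the same contradiction, delivering the required uniform lower bound $c>0$ and closing the argument.
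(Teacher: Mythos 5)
Your proof follows the paper's overall strategy (two inclusions, non-degeneracy, uniform convergence), but it is organized differently and, importantly, it actually addresses the second inclusion, which the published proof dismisses with ``can be proved similarly and we omit it.'' For the first inclusion, the paper splits into cases according to whether $u_\infty(x_0)>0$ (then it invokes Corollary \ref{coro62}) or $u_\infty(x_0)=0$ (then Theorem \ref{LGR} applied to $u_p$), whereas you bypass the case split: you show directly that the limit point would have to lie on $\partial\{u_\infty>0\}$, combining $u_\infty(x_0)=0$ with positivity of $\sup_{B_r(x_0)}u_\infty$, which is a slightly cleaner route and avoids Corollary \ref{coro62} altogether. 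Both arguments quietly rely on the same fact — that the non-degeneracy constant $\mathfrak{C}_0$ from Theorem \ref{LGR} has the positive limit $(1-\ell)^{1/(1-\ell)}$ — and you make that explicit, which is a plus.

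The one genuine soft spot is the ``positive'' alternative in your second-inclusion argument. Your dead alternative is fine and parallels the paper's Case 2. The positive alternative, however, hinges on a lower bound $u_{p_k}(y_0)\geq\mathfrak{C}_\sharp^{p_k,q_k}\,\delta^{p_k/(p_k-1-q_k)}$ with $\liminf_k\mathfrak{C}_\sharp^{p_k,q_k}>0$, and the constant $\mathfrak{C}_\sharp$ of Corollary \ref{CorNonDeg} is produced by a compactness argument in which $p$ and $q$ are frozen; its uniformity as $p\to\infty$ is not established anywhere in the paper. You acknowledge this and sketch a reasonable fix — rerun the blow-up compactness over the whole family using the explicit Morrey constant and the explicit limit of $\mathfrak{C}_0$ — but that step is stated, not carried out, and it does involve some real bookkeeping (one must check that the intermediate H\"older/gradient estimates used in the compactness argument remain uniform in $p$). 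I would call this a gap that needs to be closed before the proof is complete, though it is worth noting that the paper's ``proved similarly'' hides exactly the same issue. In fact the positive alternative is the nontrivial content of the second inclusion: it excludes the scenario where $u_{p}$ stays strictly positive near $y_0$ while $u_\infty$ vanishes there, and it is precisely here that a $p$-uniform version of the sharp growth estimate is really needed.
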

\begin{proof}
Given  $\delta >0$ let $\mathcal{N}_{\delta}(\mathfrak{S}) \defeq \left\{ x \in \R^N : \dist(x, \mathfrak{S}) < \delta\right\}$ be the $\delta$-neighborhood of a set $\mathfrak{S} \subset \R^N$. We must show that, given $0<\delta \ll 1$ and $p = p(\delta)$ large enough, one obtains
$$
  \partial \{v_p>0\} \subset \mathcal{N}_{\delta} (\partial\{v_{\infty}>0\}) \quad \mbox{and} \quad \partial\{v_{\infty}>0\} \subset \mathcal{N}_{\delta} (\partial \{v_p>0\}).
$$
We proceed with the first inclusion and suppose that it does not hold. Thus, it should exist a point $x_0 \in \partial \{u_p>0\} \cap \left(\Omega \setminus \mathcal{N}_{\delta} (\partial\{u_{\infty}>0\})\right)$. The last sentence implies  $\displaystyle \dist(x_0,  \partial\{u_{\infty}>0\}) \geq \delta$.

Now, if $x_0 \in \{u_{\infty} >0\}$ then by Corollary \ref{coro62} we get
$$
   u_{\infty}(x_0) \geq C(\dist(x_0, \partial \{u_{\infty}>0 \}))^\frac{1}{1-\ell} \geq  C\delta^\frac{1}{1-\ell}.
$$
On the other hand, due to the uniform convergence, for  $p$ large enough
$$
    u_p(x_0) \geq \frac{1}{100}  C\delta^\frac{1}{1-\ell} >0.
$$
However, this contradicts the assumption that $x_0 \in \partial \{u_p>0\}$. Therefore, $u_{\infty}(x_0) = 0$ and then $u_{\infty} \equiv 0$ in $B_{\delta}(x_0)$, which contradicts the strong non-degeneracy property given in Theorem \ref{LGR} since
$$
  \displaystyle \sup_{B_{\frac{\delta}{2}}(x_0)} u_p(x) \geq \mathfrak{c} \left(\frac{\delta}{2}\right)^{\frac{1}{1-\ell}}
$$
from where the inclusion follows.  The second inclusion can be proved similarly and we omit it.
\end{proof}

In  the following result, we analyze the behavior of the coincidence sets for the $p$-variational problem and its corresponding limiting problem. We recall the following notion of limits  sets
$$
  \displaystyle \liminf_{p \to \infty} \,\mathrm{U}_p \defeq \bigcap_{p=1}^{\infty} \bigcup_{k \geq p} \mathrm{U}_k \quad \mbox{and} \quad \limsup_{p \to \infty} \,\mathrm{U}_p \defeq \bigcup_{p=1}^{\infty} \bigcap_{k \geq p} \mathrm{U}_k,
$$
and we say that there exists the limit
$
 \displaystyle \lim_{p \to \infty} \,\mathrm{U}_p$ when $ \displaystyle \liminf_{p \to \infty} \,\mathrm{U}_p =  \limsup_{p \to \infty} \,\mathrm{U}_p$.

\begin{theorem} Let $\mathrm{U}_p \defeq \{u_p = 0\}$ be the null sets of the $p$-dead core problems and $\mathrm{U}_{\infty} \defeq \{u_{\infty} = 0\}$ be the corresponding null set of the
limiting problem.  Assume that along a subsequence $u_p \to u_\infty$. Then, also along the same subsequence, the null sets converge, that is,
$$
 \mathrm{U}_{\infty} = \lim_{p \to \infty} \,\mathrm{U}_p .
$$
\end{theorem}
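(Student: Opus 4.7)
The plan is to establish the chain of inclusions
\begin{equation*}
\mathrm{U}_\infty \;\subseteq\; \bigcup_{p=1}^{\infty}\bigcap_{k\ge p} \mathrm{U}_k \;\subseteq\; \bigcap_{p=1}^{\infty}\bigcup_{k\ge p} \mathrm{U}_k \;\subseteq\; \mathrm{U}_\infty .
\end{equation*}
The middle inclusion is automatic (``eventually in $\mathrm{U}_k$'' implies ``in $\mathrm{U}_k$ infinitely often''), and the last one is a direct consequence of the uniform convergence along the selected subsequence: if $x \in \bigcap_p\bigcup_{k\ge p}\mathrm{U}_k$, then there is a sub-subsequence with $u_{p_j}(x)=0$, hence $u_\infty(x)=\lim_j u_{p_j}(x)=0$, i.e.\ $x\in\mathrm{U}_\infty$.

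The core of the argument is the first inclusion, namely that $u_\infty(x_0)=0$ forces $u_p(x_0)=0$ for every sufficiently large $p$. I argue by contradiction: suppose that along some subsequence $u_{p_j}(x_0)>0$. Since then $x_0 \in \overline{\{u_{p_j}>0\}}$, the strong non-degeneracy of Theorem \ref{LGR} applies and yields, for every $0<r\ll 1$ uniformly in $j$,
\begin{equation*}
\sup_{\partial B_r(x_0)} u_{p_j} \;\ge\; \mathfrak{C}_0\!\left(N,p_j,q_{p_j},\inf_\Omega \lambda_0\right) r^{\frac{p_j}{p_j-1-q_{p_j}}} .
\end{equation*}
As tracked in the proof of Theorem \ref{MThmLim2}, one has $\mathfrak{C}_0 \to (1-\ell)^{1/(1-\ell)}$ and $p_j/(p_j-1-q_{p_j}) \to 1/(1-\ell)$ as $j\to\infty$; passing to the limit through the uniform convergence gives
\begin{equation*}
\sup_{\partial B_r(x_0)} u_\infty \;\ge\; (1-\ell)^{1/(1-\ell)}\, r^{1/(1-\ell)} \;>\; 0, \qquad 0<r\ll 1 .
\end{equation*}
Hence every neighborhood of $x_0$ intersects $\{u_\infty>0\}$, and combined with $u_\infty(x_0)=0$ this forces $x_0 \in \partial\{u_\infty>0\}$.

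To eliminate this remaining possibility---the main obstacle of the argument---the plan is to couple Corollary \ref{CorNonDeg} applied to $u_{p_j}$ (which, together with $u_{p_j}(x_0)\to u_\infty(x_0)=0$, forces $\dist(x_0,\partial\{u_{p_j}>0\})\to 0$) with the Hausdorff convergence of the free boundaries provided by Theorem \ref{MThmLim5} and with the sharp upper bound of Theorem \ref{MThmLim3}. The delicate point is precisely that, at a free boundary point of $u_\infty$, both the lower bound coming from non-degeneracy and the upper bound coming from the sharp growth estimate are of the same order $r^{1/(1-\ell)}$, so producing a contradiction requires tracking the sharp universal constants $(1-\ell)^{1/(1-\ell)}$ and $2\cdot 2^{1/(1-\ell)}(1-\ell)^{1/(1-\ell)}$ in the limit $p\to\infty$ and exploiting the porosity from Corollary \ref{CorPor} of $\partial\{u_\infty>0\}$ to exclude that $x_0$ is approximated from both sides by free boundary points of $u_{p_j}$.
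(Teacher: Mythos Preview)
Your argument for the inclusion $\bigcap_p\bigcup_{k\ge p}\mathrm{U}_k\subset\mathrm{U}_\infty$ is correct and in fact cleaner than the paper's $\varepsilon$-neighborhood argument: picking a sub-subsequence with $u_{p_j}(x)=0$ and using uniform convergence is all that is needed.

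The gap is in the first inclusion. You are trying to prove the \emph{pointwise} statement that $u_\infty(x_0)=0$ forces $u_p(x_0)=0$ for all large $p$, and you correctly recognize that your contradiction argument stalls exactly when $x_0\in\partial\{u_\infty>0\}$. The elaborate plan you sketch (sharp constants, porosity, Hausdorff convergence of free boundaries) does not rescue this, and for good reason: the pointwise claim is generally \emph{false} at free boundary points. In the radial example of Section~\ref{SecExam} the free boundary of $u_p$ sits at radius $\mathfrak{r}_0(p)$, which converges to $\mathfrak{r}_0(\ell)$ but need not do so monotonically; a point $x_0$ with $|x_0-\text{center}|=\mathfrak{r}_0(\ell)$ then satisfies $u_\infty(x_0)=0$ while $u_p(x_0)>0$ along any subsequence with $\mathfrak{r}_0(p)<\mathfrak{r}_0(\ell)$.

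What the paper actually establishes for this direction is the weaker \emph{Kuratowski}-type inclusion: for each $x_0\in\mathrm{U}_\infty$ there exist $x_p\in\mathrm{U}_p$ with $x_p\to x_0$. You already have the key ingredient for this in your own write-up: from Corollary~\ref{CorNonDeg} (with constants uniform in $p$) together with $u_p(x_0)\to 0$ one gets $\dist(x_0,\partial\{u_p>0\})\to 0$. Since free boundary points satisfy $u_p=0$, one simply picks $x_p\in\partial\{u_p>0\}$ with $|x_p-x_0|\to 0$ and is done. No tracking of sharp constants, no porosity, no Theorem~\ref{MThmLim5} is required.

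One further remark: be aware that the paper's displayed definitions of $\liminf$ and $\limsup$ of sets are swapped relative to the standard conventions, and that its proof tacitly passes from the set-theoretic notion to the Kuratowski one in the concluding sentence. Read in that spirit, the intended content of the theorem is a Kuratowski (equivalently, Hausdorff-type) convergence of the null sets, and that is precisely what the argument delivers.
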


\begin{proof} We will show that $\displaystyle
 \mathrm{U}_{\infty} \subset \liminf_{p \to \infty} \,\mathrm{U}_p \subset\limsup_{p \to \infty} \,\mathrm{U}_p \subset \mathrm{U}_{\infty}
$.
Given $0< \varepsilon \ll1$ (small enough), consider $\mathcal{V}_{\varepsilon}$ an $\varepsilon$-neighborhood of $\mathrm{U}_{\infty}$. Thus, $\Omega \setminus \mathcal{V}_{\varepsilon} \subset \{u_{\infty}>0\}$ is a closed set. From the continuity of $u_{\infty}$ there exists a positive $\delta = \delta(\varepsilon)$ such that
$
   u_{\infty}(x)> \delta \quad \forall \,\, x \in \Omega \setminus \mathcal{V}_{\varepsilon}$. Moreover, by the uniform convergence (up to a subsequence $u_p \to u_{\infty}$) we obtain that for $p$ large enough
$   u_{p}(x)> \delta \quad \forall \,\, x \in \Omega \setminus \mathcal{V}_{\varepsilon}$. Therefore $\Omega \setminus \mathcal{V}_{\varepsilon} \subset \{u_p>0\}$ from where $\mathrm{U}_p \subset \mathcal{V}_{\varepsilon}$ for every $p\gg 1$.
This implies that $ \limsup_{p \to \infty} \,\mathrm{U}_p \subset \mathcal{V}_{\varepsilon},$ for any $\varepsilon$-neighborhood of $\mathrm{U}_{\infty}$. Hence, we obtain
$\displaystyle \limsup_{p \to \infty} \,\mathrm{U}_p \subset \mathrm{U}_{\infty}$
since $\mathrm{U}_{\infty}$ is a compact set.

Now, let $x_0 \in \mathrm{U}_{\infty}$. We claim that there exists a
sequence $x_p$ with $u_p(x_p) = 0$ such that $x_p\to x_0$.
In fact, from our previous estimates for $u_p$ we have near the free boundary
$$
  \mathfrak{c}_{\sharp} [\dist(x, \partial \{u_p>0\})]^{\frac{p}{p-1-q}} \leq u_p(x).
$$
Hence, in the set $\Omega_p =\{ x\in\Omega \, : \,  \dist(x, \partial \{u_p>0\}) \geq \delta \}$ we have that $u_p \geq C(\delta)$(uniformly in $ p$). Now, as we have that $u_p (x_0) \to u_\infty (x_0) =0$, we obtain that $
\dist(x_0, \partial \{u_p>0\}) \to 0$  as  $p\to \infty$,
and we conclude that, given $\epsilon >0$, for every $p\geq p_0$ there is $x_p \in \mathrm{U}_p$ (that is, with $u_p(x_p) = 0$) such that $\dist(x_p, x_0) <\epsilon$. This shows that $\displaystyle  \mathrm{U}_\infty \subset \liminf_{p \to \infty} \,\mathrm{U}_p$,  as we wanted to prove.
\end{proof}

\begin{definition}[{\bf Reduced free boundary}] The \textit{reduced free boundary} $\mathfrak{F}^{\Omega}_{\text{red}}[u_{\infty}]$ is the set of points $x_0$ for which it holds that, given the half ball $B_r^{+}(x_0) \defeq \{(x-x_0)\cdot\eta\geq 0\}\cap B_r(x_0)$ we get
\begin{equation}\label{eqRedFr}
    \displaystyle \lim_{r\to 0} \frac{\Leb(B_r^{+}(x_0) \triangle \Omega^{+}[u_{\infty}])}{\Leb(B_r(x_0))} = 0.
\end{equation}
This means (cf. \cite[Chapter 3]{Giusti}) that the vector measure $\nabla \chi_{\Omega}(B_r(x_0))$ has a density at the point, i.e., there exists $\eta(x_0)$ (with $|\eta(x_0)|=1$) that fulfills the following
$$
  \displaystyle \lim_{r \to 0} \frac{\nabla \chi_{\Omega}(B_r(x_0))}{|\nabla \chi_{\Omega}(B_r(x_0))|} = \eta(x_0).
$$
\end{definition}

\begin{corollary} If $x_0 \in \mathfrak{F}^{\Omega}_{\text{red}}[u_{\infty}]$ then
$$
   B_r(x_0) \cap \mathfrak{F}^{\Omega}_{\text{red}}[u_{\infty}] \subset \{|(x-x_0)\cdot \eta(x_0)|\leq o(r)\} \quad \text{as} \quad r \to 0^+.
$$
\end{corollary}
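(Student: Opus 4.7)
The plan is to argue by contradiction via a density argument combined with the half-space structure encoded by the reduced boundary. Suppose the conclusion fails: then there exist $\delta>0$ and sequences $r_k\to 0^+$ and $x_k\in B_{r_k}(x_0)\cap \mathfrak{F}^{\Omega}_{\text{red}}[u_{\infty}]$ with $|(x_k-x_0)\cdot\eta(x_0)|\geq \delta r_k$. Passing to a subsequence, we may assume the sign of $(x_k-x_0)\cdot\eta(x_0)$ is constant across $k$.

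I first treat the case $(x_k-x_0)\cdot\eta(x_0)\leq -\delta r_k$. Fix $\rho\defeq \delta/4$. A direct geometric check (using $|x_k-x_0|\leq r_k$ and $\rho\leq\delta$) shows
$$
B_{\rho r_k}(x_k)\subset B_{2r_k}(x_0)\setminus B_{2r_k}^{+}(x_0).
$$
Applying the reduced boundary condition \eqref{eqRedFr} at scale $2r_k$ gives
$$
\Leb\bigl(\{u_{\infty}>0\}\cap (B_{2r_k}(x_0)\setminus B_{2r_k}^{+}(x_0))\bigr)\leq \Leb\bigl(B_{2r_k}^{+}(x_0)\triangle \{u_{\infty}>0\}\bigr)=o(r_k^N),
$$
hence $\Leb(\{u_\infty>0\}\cap B_{\rho r_k}(x_k))=o(r_k^N)$. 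On the other hand, since $x_k\in\partial\{u_\infty>0\}$, the uniform positive density estimate of Corollary \ref{UPDFB} yields $\Leb(\{u_\infty>0\}\cap B_{\rho r_k}(x_k))\geq \theta\,\rho^N r_k^N$. These two estimates are incompatible for $k$ large, which is the sought contradiction.

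The opposite sign case $(x_k-x_0)\cdot\eta(x_0)\geq \delta r_k$ is handled symmetrically, but applied to the coincidence set $\{u_{\infty}=0\}$: now $B_{\rho r_k}(x_k)\subset B_{2r_k}^{+}(x_0)$ and the same manipulation of \eqref{eqRedFr} gives $\Leb(\{u_\infty=0\}\cap B_{\rho r_k}(x_k))=o(r_k^N)$. The required contradiction follows from a lower density bound for $\{u_\infty=0\}$ at the reduced boundary point $x_k$, which is guaranteed by $x_k\in \mathfrak{F}^{\Omega}_{\text{red}}[u_{\infty}]$: the definition forces the asymptotic density of $\{u_\infty=0\}$ at $x_k$ along $-\eta(x_k)$ to equal $\tfrac12$, so at a suitable scale $\sigma_k\lesssim \rho r_k$ one has $\Leb(\{u_\infty=0\}\cap B_{\sigma_k}(x_k))\geq \tfrac{\omega_N}{4}\sigma_k^N$, which eventually clashes with the previous $o(r_k^N)$ bound.

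The main obstacle is Case B: Corollary \ref{UPDFB} provides a \emph{uniform} positive density for $\{u_{\infty}>0\}$ (closing Case A immediately), while the dual density for the zero set at a reduced boundary point is only asymptotic. Making the scale selection rigorous requires either a compactness argument exploiting the $L^1$-convergence of the rescaled characteristic functions $\chi_{\{u_\infty>0\}}(x_0+r_k\,\cdot)\to \chi_{H^{+}(\eta(x_0))}$, or a separate quantitative density estimate for $\{u_\infty=0\}$ in the spirit of Corollary \ref{UPDFB}. The remaining ingredients---geometric containment of balls, the symmetric difference estimate \eqref{eqRedFr}, and Corollary \ref{UPDFB}---are routine.
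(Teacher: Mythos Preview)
Your approach is the paper's: contradict \eqref{eqRedFr} by exhibiting a definite amount of the ``wrong phase'' in a small ball centred at a free boundary point that strays from the approximate tangent hyperplane. Your Case~A is correct and is precisely the clean application of Corollary~\ref{UPDFB}.

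The gap you flag in Case~B is real, and the paper's own short proof does not close it either: the paper treats only one side of the dichotomy, asserts a lower density for $\{u_\infty=0\}$ at the wayward point, yet cites Corollary~\ref{UPDFB}, which concerns $\{u_\infty>0\}$. Your objection---that the reduced-boundary hypothesis on $x_k$ yields only an \emph{asymptotic} density, possibly at scales $\ll r_k$ and hence invisible against the $o(r_k^N)$ bound---is exactly the issue. The cleanest remedy is to observe that the corollary is in fact a purely measure-theoretic statement about reduced boundaries of sets of locally finite perimeter (De Giorgi's structure theorem, cf.\ \cite[Chapter~3]{Giusti}): the blow-ups of $\chi_{\{u_\infty>0\}}$ at $x_0$ converge in $L^1_{\loc}$ to the characteristic function of a half-space, and the reduced boundary then lies asymptotically in the tangent hyperplane, with no PDE input required. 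Within the paper's PDE toolkit the missing ingredient would be a uniform positive-density estimate for $\{u_\infty=0\}$ at free boundary points (a companion to Corollary~\ref{UPDFB}), which is not established there.
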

\begin{proof}
  If we suppose that $u_{\infty}(x) =  0$ for $(x-x_0)\cdot\eta(x_0) \geq \varepsilon r$, then there exists $\mathfrak{c}_0 > 0$ such that $\Leb(B_{\varepsilon r}(x) \cap \{u_{\infty}=\}) \geq \mathfrak{c}_0\varepsilon r^N$, which implies, according to Corollary \ref{UPDFB}, that
$$
  \displaystyle \liminf_{r \to 0} \frac{\Leb(B_r^{+}(x_0) \triangle \Omega^{+}[u_{\infty}])}{\Leb(B_r(x_0))} \geq \mathfrak{c}_0\varepsilon,
$$
which contradicts \eqref{eqRedFr}.
\end{proof}

We finish this section proving that free boundary points having a tangent ball from inside are regular. To this end, let us introduce the following definition.

\begin{definition}[{\bf Regular points}] A free boundary point $y \in \mathfrak{F}_{\Omega}[u] \defeq \partial \{u>0\}\cap \Omega$ is said to have a \textit{tangent ball from inside} if there exists a ball $\mathcal{B} \subset \Omega^{+}[u] \defeq \{u>0\}\cap \Omega$ such that $y \in  \mathcal{B} \cap \Omega^{+}[u]$. Finally, a free boundary point $y \in \mathfrak{F}_{\Omega}[u]$ is \emph{regular} if $\mathfrak{F}_{\Omega}[u]$ has a tangent hyperplane at $y$.
\end{definition}

\begin{theorem}\label{RegFB} A free boundary point $y \in \mathfrak{F}_{\Omega}[u_{\infty}]$ for a limit solution of the problem \eqref{EqLim} which has a tangent ball from inside is regular.
\end{theorem}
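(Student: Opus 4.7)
The approach will be a blow-up analysis anchored by an explicit half-space profile that solves the limit equation. After translating, assume $y = 0$ and write the tangent ball as $\mathcal{B} = B_\rho(-\rho\nu)$, where $\nu$ denotes the outer unit normal to $\mathcal{B}$ at $0$. The objective is to show that the hyperplane $\pi_\nu \defeq \{x \in \R^N : x \cdot \nu = 0\}$ is tangent to $\mathfrak{F}_\Omega[u_\infty]$ at $0$, equivalently that for every $\varepsilon > 0$ there exists $r_0 > 0$ with $\mathfrak{F}_\Omega[u_\infty] \cap B_r(0) \subset \{x : |x \cdot \nu| < \varepsilon r\}$ whenever $r \leq r_0$. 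The \emph{inward} inclusion is immediate: if $x \in B_r(0)$ and $x \cdot \nu < -\varepsilon r$, then $|x|^2 \leq r^2 < 2\rho\varepsilon r \leq -2\rho(x\cdot\nu)$ once $r < 2\rho\varepsilon$, which forces $x \in \mathcal{B} \subset \Omega^+[u_\infty]$, so $x$ cannot be a free boundary point.

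For the opposite \emph{outward} side the main tool is the explicit $1$D profile $V(x) \defeq (1-\ell)^{1/(1-\ell)}(-x\cdot\nu)_+^{1/(1-\ell)}$. A direct computation shows $V$ is a viscosity solution of \eqref{EqLim} on $\R^N$: one has $-|\nabla V|+V^\ell \equiv 0$ in $\{V>0\}$, $\Delta_\infty V \geq 0$ there for $\ell \in [0,1)$, and $V \equiv 0$ on the complementary closed half-space. The ball-adapted variant $V_\mathcal{B}(x) \defeq (1-\ell)^{1/(1-\ell)}(\rho - |x + \rho\nu|)_+^{1/(1-\ell)}$ satisfies the same equation inside $\mathcal{B}$ (it is a radial function of the distance to $\partial\mathcal{B}$) and vanishes on $\partial\mathcal{B}$. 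A comparison argument for the limit equation, using $u_\infty \geq 0 = V_\mathcal{B}$ on $\partial\mathcal{B}$ and the fact that $u_\infty$ is a viscosity supersolution of \eqref{EqLim} in $\mathcal{B}$, yields $V_\mathcal{B} \leq u_\infty$ throughout $\mathcal{B}$.

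Next I perform a blow-up. Set $u_r(x) \defeq u_\infty(rx)/r^{1/(1-\ell)}$ for $r \to 0^+$. Theorem \ref{MThmLim3} provides a uniform bound $\sup_{B_R} u_r \leq C_\ell R^{1/(1-\ell)}$, Theorem \ref{MThmLim2} the matching lower bound, and Corollary \ref{coro62} two-sided control by the distance to the free boundary; Corollary \ref{HarIneq} then delivers enough equicontinuity to pass, along a subsequence, to a local uniform limit $U$, which by viscosity stability solves \eqref{EqLim} on the whole $\R^N$ with $U(0)=0$. A Taylor expansion $\rho - |rx + \rho\nu| = -r(x\cdot\nu) + O(r^2)$ identifies the rescaled barrier $V_\mathcal{B}(r_k x)/r_k^{1/(1-\ell)}$ as converging pointwise to $V(x)$, so $U \geq V$ everywhere on $\R^N$.

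The final and hardest step is the rigidity statement $U \equiv V$, which forces $\partial\{U>0\} = \pi_\nu$; then Theorem \ref{MThmLim5} applied to the blow-up family translates this equality into Hausdorff tangency of $\mathfrak{F}_\Omega[u_\infty]$ to $\pi_\nu$ at $0$, completing the proof. The principal obstacle is precisely this classification: I would combine the sharp upper bound on $U$ at $0$ (Theorem \ref{MThmLim3}) with the non-degeneracy at any putative free boundary point of $U$ in $\{x\cdot\nu > 0\}$ (Theorem \ref{MThmLim2}) and the uniform positive density result (Corollary \ref{UPDFB}) to exclude $\{U>0\}$ from extending beyond the open half-space $\{x\cdot\nu<0\}$. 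Making this rigorous, however, requires either a bespoke comparison principle for \eqref{EqLim} on unbounded domains or a Liouville-type uniqueness statement for global solutions with a half-space inside their positivity set and prescribed growth at the origin, which is not recorded explicitly in the excerpt and is the main delicate point of the argument.
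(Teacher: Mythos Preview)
Your blow-up strategy is genuinely different from the paper's, and it has real gaps that you yourself flag at the end. The paper does \emph{not} classify global solutions or pass through a comparison principle for \eqref{EqLim}; it follows \cite[Lemma 11.17]{CafSal} directly. One fixes the tangent ball $B_1(y_1)$, considers the barrier $\Phi(x)=(1-|x-y_1|)^{1/(1-\ell)}$, and uses the non-degeneracy estimate (Corollary \ref{coro62}) --- not comparison --- to assert that $u_\infty \geq \mathfrak{c}\Phi$ in $B_1(y_1)$ for some $\mathfrak{c}>0$. Then one defines $\mathfrak{c}_r$ as the best such constant on $B_r(y_1)$, observes $\mathfrak{c}_r$ is monotone in $r$ and bounded by optimal regularity (Theorem \ref{MThmLim3}), and hence $\mathfrak{c}_r\to\mathfrak{c}_\infty$ as $r\to0$. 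The Caffarelli--Salsa argument then produces the asymptotic expansion $u_\infty(x)=\mathfrak{c}_\infty\langle x-y,\eta(y)\rangle + o(\langle x-y,\eta(y)\rangle^{1/(1-\ell)})$ with $\eta(y)=y_1-y$, which immediately gives the tangent hyperplane.

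The specific obstructions to your route are: (i) your step ``a comparison argument for the limit equation \ldots\ yields $V_\mathcal{B}\leq u_\infty$'' invokes a comparison principle for \eqref{EqLim} that the paper explicitly leaves \emph{open} in Section~\ref{SecExam}; (ii) the rigidity $U\equiv V$ is, as you admit, a Liouville statement that is nowhere established; and (iii) Theorem~\ref{MThmLim5} concerns convergence of $\partial\{u_p>0\}$ to $\partial\{u_\infty>0\}$ as $p\to\infty$, not Hausdorff convergence of blow-ups of $u_\infty$ to $\partial\{U>0\}$ --- you would need a separate free-boundary stability lemma for the blow-up sequence. The paper's barrier approach sidesteps all three issues because it never compares two viscosity solutions of \eqref{EqLim}; it only uses the pointwise two-sided bounds on $u_\infty$ already proved.
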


\begin{proof}
The proof follows similarly to \cite[Lemma 11.17]{CafSal}, thus we only sketch the modifications for the reader's convenience. Let us suppose that $B_1(y_1)$ is tangent to $\mathfrak{F}_{\Omega}[u_{\infty}]$ at $y$ and consider the  function
$$
  \Phi(x) = (1-|x-y_1|)^{\frac{1}{1-\ell}}.
$$
From the non-degeneracy, some multiple of $\Phi$, say $\mathfrak{c} \Phi$, is a lower barrier of
$u_{\infty}$ in $B_1(y_1)$. Now, let $\mathfrak{c}_r > 0$ be the supremum of all $\mathfrak{c}$'s such that $u(x) \geq \mathfrak{c} \Phi(x)$ in $B_r(y_1)$.

Notice that such values $\mathfrak{c}_r$ increase with $r$. Hence, by optimal regularity,
$\mathfrak{c}_r$ converges to some constant $\mathfrak{c}_{\infty}$ as $r \to 0$.
According to \cite[Lemma 11.17]{CafSal}, this implies the following asymptotic behavior near the free boundary
$$
  u_{\infty}(x) =  \mathfrak{c}_{\infty} \langle (x-y),\eta(y) \rangle+ o \left(\langle (x-y),\eta(y)\rangle^{\frac{1}{1-\ell}}\right),
$$
where $\eta(y) = y_1-y$. Therefore, the plane orthogonal to $\eta(y)$ is tangent to $\mathfrak{F}_{\Omega}[u_{\infty}]$ and, we conclude that $y$ is a regular point.
\end{proof}

In particular, the previous result reveals that at interior free boundary points verifying the interior ball  condition, limit solutions for the $p-$obstacle problem with zero constraint $q=0$, are regular.

\section{Final comments}\label{SecExam}

Notice that \eqref{EqLim} can be written as a fully nonlinear second order operator as follows
$$
\begin{array}{rcl}
  F_\infty: \R \times \R^N \times \text{Sym}(N) & \longrightarrow & \R \\
  (s, \xi, X) & \mapsto & \max\left\{-\xi^T X\cdot\xi, -|\xi|+ s^{\ell}\right\},
\end{array}
$$
which is non-decreasing in $s$. Moreover, $F_\infty$ is a \textit{degenerate elliptic} operator in the sense that
$$
   F_\infty(s, \xi, X) \leq F_\infty(s, \xi, Y) \quad \text{whenever} \quad Y\leq X \quad \text{in the sense of matrices}.
$$
Nevertheless, $F_\infty$ is not in the framework of \cite[Theorem 3.3]{CIL}.
Hence, we leave open the uniqueness of viscosity solutions to \eqref{EqLim}.  Another open issue is to obtain the optimal regularity of viscosity solutions to \eqref{EqLim}.

In the last part of this paper we include some examples to see what kind of solutions to \eqref{EqLim} one can expect.

\begin{example}[{\bf\bf Radial solutions}] First of all, let us study the following boundary value problem:
\begin{equation}\label{rad eq}
	\left \{
		\begin{array}{rllll}
			-\Delta_p u &=& -\lambda_0 u_{+}^{q}(x) & \text{ in } & B_{R}(x_0), \\
			u(x) &=& \kappa &\text{ on } & \partial B_{R}(x_0),
		\end{array}
	\right.
\end{equation}
where $R, \lambda_0$ and $\kappa$ are a positive constants.

Observe that by the uniqueness of solutions for the Dirichlet problem \eqref{rad eq} and invariance under rotations of the $p-$Laplacian operator, it is easy to see that $u$ must be a radially symmetric function. Hence, let us deal with the following one-dimensional ODE
\begin{equation}\label{rad edo}
			-(|v^{\prime}(t)|^{p-2}v^{\prime}(t))^{\prime} = -\lambda_0 v_{+}^q(t) \quad \text{ in }\, (0, \mathfrak{T}), \qquad v(0)=0 \,\,\,\text{and}\,\,\,v(\mathfrak{T})=\kappa.
\end{equation}
It is straightforward to check that $v(t)=\Theta(1, \lambda_0, p, q) t^{\,\frac{p}{p-1-q}}$ is a solution to \eqref{rad edo}, where
\begin{equation}\label{theta}
	\Theta = \Theta(N, \lambda_0, p, q) \defeq  \left[\lambda_0 \frac{ \left(p-1-q \right)^{p} }{ p^{p-1}(pq+N(p-1-q) )}\right]^{\frac{1}{p-1-q}}
	\quad \mbox{and} \quad
	\mathfrak{T} \defeq \left( \frac{\kappa}{\Theta} \right)^{\frac{p-1-q}{p}}. 	
\end{equation}
Now, in order to characterize the unique solution of \eqref{rad eq}, fix $x_0 \in \mathbb{R}^N$ and $0<\mathfrak{r}_0<R$. We assume the \textit{compatibility condition} for the dead-core problem, namely $R > \mathfrak{T}$. Thus, for $\mathfrak{r}_0 = R-\mathfrak{T}$ the radially symmetric function given by
\begin{equation}\label{RadProf}
  u(x)\defeq \Theta \left[|x-x_0|-R + \left( \frac{1}{\Theta} \right)^{\frac{p-1-q}{p}} \right]_+^{\frac{p}{p-1-q}} = \Theta
  \left[|x-x_0|-\mathfrak{r}_0 \right]_+^{\frac{p}{p-1-q}}
\end{equation}
fulfills \eqref{rad eq} in the weak sense, where $\mathfrak{r}_0 \defeq R - \left( \frac{\kappa}{\Theta} \right)^{\frac{p-1-q}{p}}$. Moreover, the dead-core is given  by $B_{\mathfrak{r}_0}(x_0)$.

Also it is easy to see that the limit radial profile as $p \to \infty$ becomes
\begin{equation} \label{radial}
  u_{\infty}(x)\defeq (1-\ell)^{\frac{1}{1-\ell}}\left(|x-x_0|-\mathfrak{r}_0(\ell) \right)_+^{\frac{1}{1-\ell}},
\end{equation}
which satisfies \eqref{EqLim} in the viscosity sense with $\Omega = B_{R}(x_0)$ the dead-core given by $B_{\mathfrak{r}_0(\ell)}(x_0)$ for $\mathfrak{r}_0(\ell) = R - \frac{1}{1-\ell}$ and $g\equiv \kappa$ on $\partial B_{R}(x_0)$.
\end{example}

\begin{example}[{\bf Aronsson's function}] Let us consider the so-called Aronsson's function, i.e., the two-dimensional function $\mathbb{A}(x, y) \defeq (x^{\frac{4}{3}} - y^{\frac{4}{3}})_{+}$ defined in $\Omega  = B_1(\sqrt{2}, \sqrt{2})$. It is a well-known fact that $\mathbb{A}$ is $\infty-$harmonic, i.e.,
$ -\Delta_{\infty} \mathbb{A} = 0$ in $\Omega\setminus\{x=y\}$. Moreover, it is easy to check that
$$
    |\nabla \mathbb{A}(x, y)| = \frac{4}{3}\sqrt{x^{\frac{2}{3}} + y^{\frac{2}{3}}} \geq \left(x^{\frac{4}{3}} - y^{\frac{4}{3}}\right)^{\frac{1}{4}} = \mathbb{A}^{\frac{1}{4}}(x, y).
$$
Therefore, $\mathbb{A}$ is a viscosity solution to \eqref{EqLim} in $\Omega\setminus\{x=y\}$.
\end{example}

\begin{example} For any fixed $\ell \in \left(0, 1\right)$ consider the function $v:\Omega \subset \R^2 \to \R$ defined by
$$
  \displaystyle v(x, y) \defeq \arctan\left(\frac{y}{x}\right)_{+}
$$
where $\Omega  = B_{\frac{1}{10}}\left(\frac{1}{5},0\right)$. Under such assumptions, it is easy to check that $ -\Delta_{\infty} v(x) = 0$ in  $\Omega\setminus\{y=0\}$. Moreover, one verifies that
$$
   \displaystyle |\nabla v(x, y)| = \frac{1}{\sqrt{x^2 +y^2}} \geq v^{\ell}(x, y) \qquad \text{ in } \Omega \setminus \{y=0\}.
$$
Therefore, $v$ is a viscosity solution to \eqref{EqLim} in $\Omega \setminus \{y=0\}$.
\end{example}

\begin{example}
We remark that the radial limit profile \eqref{radial} is always an $\infty-$subharmonic function whose  $\infty-$Laplacian remains bounded provided that $\ell \in \left[\frac{1}{4}, 1\right)$.

Indeed, from the expression for $u_\infty$ in \eqref{radial} it is easy to check that
$$
   \nabla u_{\infty}(x) =  (1-\ell)^{\frac{l}{1-\ell}}\left(|x-x_0|-\mathfrak{r}_0\right)^{\frac{\ell}{1-\ell}} \frac{x-x_0}{|x-x_0|} \,\,\,\Rightarrow \,\,\,|\nabla u_{\infty}(x)| = u^{\ell}_{\infty}(x).
$$
and
\begin{eqnarray*}
D^2 u_{\infty} \left(x\right) = (1-\ell)^{\frac{1}{1-\ell}}\frac{1}{1-\ell} \left[\frac{\ell}{1-\ell}(|x-x_0|-\mathfrak{r}_0)^{\frac{2\ell-1}{1-\ell}}  \frac{(x-x_0)\otimes (x-x_0)}{|x-x_0|^2}\right. \\
\left. +  |x-x_0|^{\frac{2\ell-1}{1-\ell}} \left( \text{Id}_{N \times N}-\frac{(x-x_0)\otimes(x-x_0)}{|x-x_0|^2}\right)\right],
\end{eqnarray*}
from where we obtain that $-\Delta_{\infty} u_{\infty}(x)= -\left( (1-\ell)^{\frac{1}{1-\ell}}\frac{1}{1-\ell}\right)^3\frac{\ell}{1-\ell}(|x-x_0|-\mathfrak{r}_0)_{+}^{\frac{4\ell-1}{1-\ell}}\leq 0$. On the other hand, if $\ell\in \left(0, \frac{1}{4}\right)$, then $\Delta_\infty u_\infty$ blows up at free boundary points (compare with \cite{RTU} for a free boundary problem of obstacle type driven by the $\infty-$Laplacian).
\end{example}

\subsection*{Acknowledgments}
This work was partially supported by Consejo Nacional de Investigaciones Cient\'{i}ficas y T\'{e}cnicas (CONICET-Argentina).

\end{document}